\documentclass[a4paper,reqno,10pt]{amsart}
\usepackage[a4paper,left=2cm,right=2cm,top=2cm]{geometry}
\usepackage[onehalfspacing]{setspace}
\usepackage[colorlinks=true, linkcolor=magenta, citecolor=cyan]{hyperref}
\usepackage{fontspec}
\usepackage{amsmath}
\usepackage{amsthm}
\usepackage{amssymb}
\usepackage{amsfonts}
\usepackage{mathtools}
\usepackage{colonequals}
\usepackage{dsfont}
\usepackage{enumitem}
\usepackage{mathdots}
\usepackage{fancyhdr}
\usepackage{mdframed}
\usepackage{csquotes}
\usepackage{bm}
\usepackage[normalem]{ulem}

\usepackage{tikz-cd}
\tikzset{
  symbol/.style={
    draw=none,
    every to/.append style={
      edge node={node [sloped, allow upside down, auto=false]{$#1$}}}
  }
}

\uccode`ß="1E9E

\newtheorem{introtheorem}{Theorem}

\newtheorem{introproposition}[introtheorem]{Proposition}

\newtheorem{theorem}{Theorem}[section]
\newtheorem{proposition}[theorem]{Proposition}
\newtheorem{lemma}[theorem]{Lemma}
\newtheorem{corollary}[theorem]{Corollary}

\theoremstyle{definition}

\theoremstyle{remark}
\newtheorem{remark}[theorem]{Remark}
\newtheorem{example}[theorem]{Example}
\newtheorem{addendum}[theorem]{Addendum}

\usepackage[backend=biber,maxalphanames=5,maxbibnames=4,style=alphabetic]{biblatex}
\AtBeginBibliography{\small}

\newcommand{\br}[1]{\left(#1\right)}
\newcommand{\sqb}[1]{\left\lbrack #1\right\rbrack}
\newcommand{\abs}[1]{\left\lvert #1\right\rvert}
\newcommand{\iso}{\stackrel{\sim}{\rightarrow}}

\newcommand{\N}{\mathbb{N}}

\newcommand{\C}{\mathcal{C}}
\newcommand{\D}{\mathcal{D}}
\newcommand{\E}{\mathcal{E}}

\newcommand{\I}{\mathcal{I}}
\newcommand{\K}{\mathcal{K}}

\newcommand{\T}{\mathcal{T}}

\newcommand{\X}{\mathcal{X}}
\newcommand{\Y}{\mathcal{Y}}

\newcommand{\const}{\mathrm{const}}

\newcommand{\Cocart}{\mathrm{Cocart}}
\newcommand{\LFib}{\mathrm{LFib}}

\DeclareMathOperator*{\colim}{colim}

\newcommand{\Map}{\operatorname{Map}}

\newcommand{\id}{\operatorname{id}}
\newcommand{\ev}{\operatorname{ev}}
\newcommand{\Fun}{\operatorname{Fun}}

\newcommand{\mhyphen}{\operatorname{-}}
\newcommand{\Un}{\mathrm{Un}}

\newcommand{\Cat}{\mathrm{Cat}}
\newcommand{\An}{\mathrm{An}}
\newcommand{\Sp}{\mathrm{Sp}}

\renewcommand{\Pr}{\mathrm{Pr}}
\newcommand{\Acc}{\mathrm{Acc}}

\newcommand{\Pro}{\operatorname{Pro}}

\newcommand{\RTop}{\mathrm{RTop}}
\newcommand{\Ar}{\mathrm{Ar}}

\newcommand{\lax}{\mathrm{lax}}
\newcommand{\oplax}{\mathrm{oplax}}
\newcommand{\op}{\mathrm{op}}
\newcommand{\cc}{\mathrm{cc}}

\newcommand{\ca}{\mathrm{ca}}
\newcommand{\st}{\mathrm{st}}

\newcommand{\singlequote}[1]{\textquotesingle #1\textquotesingle}
\newcommand{\Adj}{\arrow[r,shift left=0.2em]\arrow[r,leftarrow,shift right=0.2em]}

\title{On Cofiltered Limits of $\infty$-Categories and Adjunctions}
\author{Thorger Gei\ss}
\address{FB Mathematik und Informatik, Universität Münster, Einsteinstraße 62, 48149 Münster, Germany}
\email{tgeiss@uni-muenster.de}
\begin{document}

\begin{abstract}
We prove that, under mild assumptions, the limit of a cofiltered diagram of $\infty$-categories is a reflective (resp. coreflective) localization of its oplax (resp. lax) limit. This gives rise to a number of exceptional \singlequote{stability} results for categorical properties under such limits, in particular one for adjunctions. As an application, we recover a push-pull formula describing certain filtered colimits in the $\infty$-category $\Pr^L$ of presentable $\infty$-categories.
\end{abstract}

\setcounter{tocdepth}{1}

\begingroup\parskip=0pt
\maketitle
\tableofcontents
\endgroup

\section{Introduction}

In the study of $\infty$-category theory (\cite{Joyal}, \cite{HTT}, \cite{HA}), one of the most important tools to construct or describe $\infty$-categories is to take limits in the (huge) $\infty$-category $\widehat{\Cat}_{\infty}$ of (large) $\infty$-categories (e.g. stabilization, various flavors of completions, descent properties, etc.). It is therefore natural to ask which categorical properties are compatible with taking limits. In the present paper, we address this question for an exceptional class of limits—\textit{cofiltered} limits—and, among other things, one of the central relationships between two $\infty$-categories, namely an \textit{adjunction}.

To set the stage, let $\I$ be a small $\infty$-category, $\C_{\bullet},\D_{\bullet}\colon\I^{\op}\rightarrow\widehat{\Cat}_{\infty}$ be two diagrams of $\infty$-categories and $R\colon\D_{\bullet}\Rightarrow\C_{\bullet}$ a natural transformation that is pointwise given by right adjoints $R_i\colon\D_i\rightarrow\C_i,\,i\in\I$. The pointwise left adjoints $L_i\colon\C_i\rightarrow\D_i,\,i\in\I$ generally do not assemble into a natural transformation $L\colon\C_{\bullet}\rightarrow\D_{\bullet}$ as they need not be compatible with the transition maps. This compatibility is precisely the \textit{Beck-Chevalley condition} for the naturality squares and, if satisfied, indeed implies that $L$ is a natural transformation and $\begin{tikzcd}\lim_{\I^{\op}}L\colon\lim_{\I^{\op}}\D_{\bullet}\Adj & \lim_{\I^{\op}}\C_{\bullet}\colon\lim_{\I^{\op}}R\end{tikzcd}$ is an adjunction (\cite[Proposition 5.1]{HorevYanovski}, \cite[Lemma 3.19]{Linskens}). In general, this is the best one can hope for.

\subsection*{Main Result} The main result of this paper is that, exceptionally, in the case that $\I$ is a filtered $\infty$-category, no assumption on the natural transformation $R$ is necessary to obtain such a positive result: the failure of the Beck-Chevalley condition can, under a mild technical assumption on the diagram $\D_{\bullet}$, be amended by taking suitable filtered colimits of the pointwise left adjoints along the mate transformations of the naturality squares.

\begin{introtheorem}[Theorem \ref{AdjunctionDual}]\label{IntroAdjunction}
Let $\I$ be a small, filtered $\infty$-category and, in the above setting, suppose that the category $\D_i$ admits $\I$-shaped colimits for all $i\in\I$ and the transition maps $\varphi^{\ast}\colon\D_j\rightarrow\D_i$ preserve $\I$-shaped colimits for all $\varphi\colon i\rightarrow j$ in $\I$. Then there is an induced adjunction
\begin{equation*}
\begin{tikzcd}
L\colon\lim_{\I^{\op}}\C_{\bullet}\Adj & \lim_{\I^{\op}}\D_{\bullet}\colon\lim_{\I^{\op}}R.
\end{tikzcd}
\end{equation*}
More precisely, the left adjoint $L$ is explicitly given levelwise by
\begin{equation*}
\pi_iL\simeq\colim_{(\varphi\colon i\rightarrow j)\in\I_{i/}}\varphi^{\ast}L_j\pi_j\colon\lim\nolimits_{\I^{\op}}\C\rightarrow\D_i\qquad\text{ for all }i\in\I.
\end{equation*}
\end{introtheorem}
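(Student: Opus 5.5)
Following the abstract, I would deduce the statement from the fact that, for filtered $\I$, the limit $\lim_{\I^{\op}}\D_\bullet$ is a reflective localization of the oplax limit. Adjunctions pass to lax/oplax limits without any Beck--Chevalley condition, and composing with the reflection then produces $L$.

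\emph{Step 1 (oplax limits).} Model the oplax limit $\lim^{\oplax}\D_\bullet$ as sections of the cartesian fibration $\Un(\D_\bullet)\to\I$, and similarly for $\C_\bullet$. The strict limit is the full subcategory of cartesian sections. The transformation $R$ induces a map $\lim^{\oplax}R$ over $\I$ that preserves cartesian edges and is fiberwise the right adjoint $R_i$. By the relative adjoint criterion (Lurie, HA 7.3.2.6), it has a left adjoint $L^{\oplax}$ relative to $\I$, which need not preserve cartesian edges. On sections, $L^{\oplax}$ sends $(c_i)$ to the section $i\mapsto L_ic_i$, with structure maps $L_i\varphi^\ast c_j\to \varphi^\ast L_jc_j$ given by the mates. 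Passing to section categories gives an adjunction $L^{\oplax}\dashv\lim^{\oplax}R$, and $\lim^{\oplax}R$ restricts on cartesian sections to $\lim R$.

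\emph{Step 2 (main obstacle: the reflection).} I would show that the inclusion $\lim\D_\bullet\subseteq\lim^{\oplax}\D_\bullet$ has a left adjoint $\rho$ given by
\begin{equation*}
(\rho d)_i\simeq\colim_{(\varphi\colon i\to j)\in\I_{i/}}\varphi^\ast d_j.
\end{equation*}
This colimit exists because $\I_{i/}$ is filtered, and since $\I$ is filtered, $\I_{i/}\to\I$ is cofinal, so the colimit is effectively $\I$-shaped. Cartesianness comes from three facts: for $\psi\colon k\to i$, the functor $\psi^\ast$ commutes with these colimits; the functor $\I_{i/}\to\I_{k/}$, $\varphi\mapsto\varphi\psi$, is cofinal by filteredness; and transitions compose, $(\varphi\psi)^\ast\simeq\psi^\ast\varphi^\ast$. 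For the universal property, take $d$ oplax and $e$ cartesian. A map $d\to e$ restricted to $j$ and pulled back gives $\varphi^\ast d_j\to\varphi^\ast e_j\simeq e_i$, and these assemble into a map from the colimit. To check that this yields an equivalence on mapping spaces, I would write $\Map_{\oplax}$ as an end and use cofinality to identify it with $\Map(\rho d,e)$. The hard part is making this coherent. The cleanest route is to construct $\rho$ as an $\I$-parametrized left Kan extension along the source map $\Ar(\I)\to\I$, i.e.\ as a pointwise left Kan extension in the cartesian fibration. Its cartesianness and the adjunction then reduce to cofinality statements about $\I_{i/}$.

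\emph{Step 3 (composition).} Set $L:=\rho\circ L^{\oplax}|_{\lim\C_\bullet}$. For $c\in\lim\C_\bullet$ and $e\in\lim\D_\bullet$,
\begin{equation*}
\Map(L c,e)\simeq\Map_{\oplax}(L^{\oplax}c,e)\simeq\Map(c,(\lim R)e),
\end{equation*}
where the second equivalence holds because the lax limit of $R$ restricts to $\lim R$. Unwinding the formulas gives $\pi_iL\simeq\colim_{\varphi\colon i\to j}\varphi^\ast L_j\pi_j$, since $(L^{\oplax}c)_j=L_jc_j$. This proves the adjunction together with the stated levelwise description.
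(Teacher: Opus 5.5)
Your three steps follow the paper's own route. Step~1 is the dual of Proposition~\ref{AdjunctionOnLaxLimits}, Step~2 is Theorem~\ref{MainThmDual}, and Step~3 is the composition argument from the proof of Theorem~\ref{Adjunction}.

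The gap is the existence claim in Step~2. That $\I_{i/}$ is filtered says nothing about existence of colimits. Cofinality of $\I_{i/}\to\I$ only says that a diagram \emph{restricted} from $\I$ has the same colimit over $\I_{i/}$. It does not turn an arbitrary diagram $\I_{i/}\to\D_i$, such as $\varphi\mapsto\varphi^{\ast}d_j$, into an $\I$-shaped one. The same objection applies to your claim that $\psi^{\ast}$ commutes with these colimits. The paper's proof of Theorem~\ref{MainThm} makes exactly this inference, and under the stated hypothesis the statement actually fails.

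Here is a counterexample. Let $\I$ be the one-object category with endomorphism monoid $(\N,\max)$. It is filtered: parallel $a,b$ are equalized by $\max(a,b)$. Let $\D_\bullet$ be the poset $(\N,\le)$ with $n$ acting by $x\mapsto\max(x,n)$. Let $\C_\bullet=\ast$, and let $R\colon\D_\bullet\Rightarrow\C_\bullet$ be the unique map, which is pointwise right adjoint to $\ast\mapsto 0$.
\begin{itemize}
\item Every $\I$-shaped diagram in a poset is constant, so $\D$ admits $\I$-shaped colimits and the action preserves them.
\item However, no $x$ satisfies $x=\max(x,n)$ for all $n$, so $\lim_{\I^{\op}}\D_\bullet=\emptyset$.
\item Since $\lim_{\I^{\op}}\C_\bullet=\ast$, the functor $\lim R$ has no left adjoint.
\item Correspondingly, your colimit $\colim_{m\in\I_{\ast/}}\max(x,m)$ is the supremum of an unbounded sequence and does not exist.
\end{itemize}

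So no argument can close this step as stated; the hypothesis has to be strengthened. You can assume that each $\D_i$ admits $\I_{i/}$-shaped colimits and that the transition functors preserve them; this is the variant with $\K=\{\I_{i/}\colon i\in\I\}$ mentioned in the remark after Theorem~\ref{MainThm}. Alternatively, assume that all filtered colimits exist and are preserved, which covers the presentable applications.

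With that hypothesis your outline works and matches the paper. The part you only sketch, namely the coherent universal property via an end, is what the paper actually supplies in the proof of Theorem~\ref{MainThm}:
\begin{itemize}
\item a relative Kan extension along arrow categories;
\item (the dual of) Proposition~\ref{CocartesianProper}, to see that the fiber $\I_{i/}$ is cofinal in the relevant relative slices;
\item an explicit chain of mapping-space equivalences.
\end{itemize}
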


\subsection*{Applications} In Section \ref{Applications}, we illustrate a number of applications of Theorem \ref{IntroAdjunction}. In particular, we obtain \singlequote{stability} results for categorical properties which can be expressed in terms of adjunctions under cofiltered limits, e.g. for categories that are cocomplete (Corollary \ref{ApplicationCocomplete}) or cartesian closed (Proposition \ref{CartesianClosed}). Using the explicit formula, we also obtain such results for the left adjoint $L$ being fully faithful (Proposition \ref{FullyFaithful}) or compatible with finite limits (Proposition \ref{FiniteLimits}).

We highlight two further applications. The first concerns filtered colimits in the $\infty$-category $\Pr^L$ of presentable $\infty$-categories and left adjoint functors. To this end, let $\C_{\bullet}\colon\I\rightarrow\Pr^L$ be a small, filtered diagram and recall that $\colim_{\I}^{\Pr^L}\C_{\bullet}$ is computed as the limit $\lim_{\I^{\op}}\C_{\bullet}^R$ of the diagram $\C_{\bullet}^R\colon\I^{\op}\rightarrow\widehat{\Cat}_{\infty}$ obtained by passing to right adjoints. The structure map $\iota_i\colon\C_i\rightarrow\colim_{\I}^{\Pr^L}\C_{\bullet}\simeq\lim_{\I^{\op}}\C_{\bullet}^R$ may then be understood levelwise via the projections $\pi_j\colon\lim_{\I^{\op}}\C_{\bullet}^R\rightarrow\C_j$, and we interpret $\pi_j\iota_i$ as pushing from stage $i$ to the colimit and then pulling back to stage $j$. Under the hypotheses of Theorem \ref{IntroAdjunction}, this push-pull can be expressed as a filtered colimit of push-pulls through the finite stages of the diagram.

\begin{introproposition}[Push-Pull Formula, Proposition \ref{PushPullFormula} \& Remark \ref{FilteredColimitPrL}]\label{IntroPushPull}
Let $\I$ be a small, filtered $\infty$-category and $\C_{\bullet}\colon\I\rightarrow\Pr^L$ a diagram such that the right adjoint $\varphi^{\ast}\colon\C_j\rightarrow\C_i$ to the transition map $\varphi_{\#}$ preserves $\I$-shaped colimits for all $\varphi\colon i\rightarrow j$ in $\I$. For any $i\in\I$ and $j\in\I$, the canonical comparison map
\begin{equation*}
\begin{tikzcd}
\colim\limits_{(\varphi\colon i\rightarrow k,\psi\colon j\rightarrow k)\in\I_{\{i,j\}/}}\psi^{\ast}\varphi_{\#}\arrow[r,"\sim"] & \pi_j\iota_i
\end{tikzcd}
\end{equation*}
induced by the adjunction units is a natural equivalence of functors $\C_i\rightarrow\C_j$.
\end{introproposition}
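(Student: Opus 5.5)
We deduce this from Theorem \ref{IntroAdjunction}, applied to the constant diagram and a pointwise right adjoint. Put $\C_\bullet^R\colon\I^{\op}\rightarrow\widehat{\Cat}_{\infty}$ (transition maps $\varphi^\ast$) in the role of $\D_\bullet$. For fixed $i\in\I$, take the diagram indexed by $\I_{i/}$ (equivalently, restrict to the cofinal subcategory). We want the functor $\lim_{\I^{\op}}\C^R_\bullet\rightarrow\C_i$ given by $\pi_i$, whose left adjoint is $\iota_i$.

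The cleaner route is to apply Theorem \ref{IntroAdjunction} to a natural transformation $R\colon\C^R_\bullet\Rightarrow\C'_\bullet$. Here $\C'_\bullet\colon(\I_{i/})^{\op}\rightarrow\widehat{\Cat}_\infty$ is the constant diagram at $\C_i$. The component at $\varphi\colon i\to k$ is $R_\varphi=\varphi^\ast\colon\C_k\rightarrow\C_i$. These are natural, since $\varphi^\ast\psi^\ast\simeq(\psi\varphi)^\ast$, and pointwise right adjoints with left adjoints $L_\varphi=\varphi_\#$. The hypotheses of the theorem hold: $\I_{i/}$ is filtered, each $\C_k$ is presentable and hence admits $\I_{i/}$-shaped colimits, and the $\psi^\ast$ preserve them by assumption. (Colimits over $\I_{i/}$ are computed as colimits over $\I$ up to cofinality, since $\I_{i/}\rightarrow\I$ is cofinal for filtered $\I$.)

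Since $\I_{i/}$ is weakly contractible, $\lim_{(\I_{i/})^{\op}}\C'_\bullet\simeq\C_i$. Also $\lim_{(\I_{i/})^{\op}}\C^R_\bullet\simeq\lim_{\I^{\op}}\C^R_\bullet$ by cofinality, and under these identifications $\lim R$ becomes $\pi_i$. The theorem then identifies the left adjoint $\iota_i$ of $\pi_i$ by the explicit formula
$$\pi_\psi\iota_i\simeq\colim_{(\chi\colon k\rightarrow l)\in(\I_{i/})_{\psi/}}\chi^\ast L_{\chi\psi}\simeq\colim\,\chi^\ast(\chi\psi)_\#,$$
where $\pi_\psi\colon\lim\C^R_\bullet\rightarrow\C_k$ is the projection for an object $\psi\colon i\to k$ of $\I_{i/}$.

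To treat a general $j$, pick some $k$ under both $i$ and $j$, which is possible since $\I$ is filtered. Write $\pi_j\simeq\psi'^\ast\pi_k$ for $\psi'\colon j\to k$, and use that $\psi'^\ast$ commutes with filtered colimits. This gives the formula as a colimit over the index category $\{(\varphi\colon i\to l,\ \psi\colon j\to l)\}$. Reindexing it to $\I_{\{i,j\}/}$ is a cofinality check: the forgetful comparison functor $(\I_{i/})_{\psi/}\rightarrow\I_{\{i,j\}/}$ is cofinal, which one verifies by Quillen's Theorem A using filteredness.

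The main obstacle is to show that the resulting equivalence is the canonical comparison map \emph{induced by the units}, and not just some abstract equivalence. For this I would trace through the construction in Theorem \ref{IntroAdjunction}: the unit of the limit adjunction restricts levelwise to the colimit of the units $\id\to\psi^\ast\psi_\#$ composed with mates. One checks that each component $\psi^\ast\varphi_\#\rightarrow\psi^\ast\pi_l\iota_i\simeq\pi_j\iota_i$ is the one obtained from the unit $\varphi_\#\rightarrow\pi_l\iota_i$, which is adjoint to $\iota_i\simeq\iota_l\varphi_\#$. Compatibility with the colimit-cocone then identifies the two maps.
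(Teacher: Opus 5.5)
Your main line is the paper's. You make $i$ initial by restricting to $\I_{i/}$; the paper passes to $\I_{k/}$ for a cospan $i\rightarrow k\leftarrow j$. You apply Theorem~\ref{AdjunctionDual} to the tautological transformation from $\C^R_\bullet$ to the constant diagram at $\C_i$, whose limit is $\pi_i$. You then treat $j$ by post-composing with $(\psi')^{\ast}$ and using cofinality of $\I_{k/}\rightarrow\I_{\{i,j\}/}$.

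A small point on that transformation: the equivalences $\varphi^{\ast}\psi^{\ast}\simeq(\psi\varphi)^{\ast}$ are not enough data. The coherent version is the colimit cocone of $\C^R_\bullet\vert_{(\I_{i/})^{\op}}$, since $\id_i$ is terminal in $(\I_{i/})^{\op}$.

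With this you obtain \emph{an} equivalence $\colim_{\I_{\{i,j\}/}}\psi^{\ast}\varphi_{\#}\simeq\pi_j\iota_i$. The statement, however, is that the \emph{canonical} map is an equivalence, and as sketched your last step does not establish this. A map out of an $\infty$-categorical colimit is a coherent cocone. Comparing the components $\psi^{\ast}\varphi_{\#}\rightarrow\pi_j\iota_i$ one at a time with the unit-induced maps therefore does not identify it with the canonical comparison map. Moreover, Addendum~\ref{AdjunctionCoUnit} only describes the unit through the $\id$-component of the colimit cocone at each level. To reach the other components you would also have to control how the equivalences $\pi_k\iota_i\simeq\chi^{\ast}\pi_l\iota_i$ interact with restriction along $\I_{l/}\rightarrow\I_{k/}$, coherently in $\chi$. ``Tracing through the construction'' leaves all of this open.

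The paper avoids this bookkeeping with a formal trick. After reducing to $i=j=\emptyset$ initial, it extends the diagram by its limit to $\overline{\C}_\bullet\colon(\I^{\triangleright})^{\op}\rightarrow\widehat{\Cat}^R(\I)$. This uses the first half of the statement, that $\pi_\emptyset$ is a right adjoint. It then compares the adjunctions obtained from $\overline{\C}_\bullet\Rightarrow\const_{\C_\emptyset}$ and $\C_\bullet\Rightarrow\const_{\C_\emptyset}$ via Addendum~\ref{NaturalityOfAdjunctionInShape} for $\I\hookrightarrow\I^{\triangleright}$.

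Both restriction functors on limits are equivalences, so the mate is an equivalence. At level $\emptyset$ the addendum identifies it with the map $\colim_{\I}\varphi_i^{\ast}(\varphi_i)_{\#}\rightarrow\colim_{\I^{\triangleright}}\varphi_i^{\ast}(\varphi_i)_{\#}\simeq\pi_\emptyset(\pi_\emptyset)_L$ induced by the inclusion of indexing categories, where $\varphi_i\colon\emptyset\rightarrow i$ is the unique map. The canonical comparison map is exactly the map from the colimit over $\I$ to the value at the cone point of this $\I^{\triangleright}$-shaped diagram $i\mapsto\varphi_i^{\ast}(\varphi_i)_{\#}$. So the conclusion says the diagram is a colimit diagram, i.e.\ the canonical map is an equivalence, with all coherences built in. Replacing your final paragraph with this argument closes the gap.
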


The second application concerns the shape theory of $\infty$-topoi and is a straightforward consequence of the push-pull formula. The statement is adapted from Hoyois \cite[Proposition 2.11, (1)]{Hoyois}.

\begin{introproposition}[Proposition \ref{ShapeCofiltered}]\label{IntroShapeCofiltered}
The shape functor $\Pi_{\infty}\colon\RTop\rightarrow\Pro(\An)$ preserves small, cofiltered limits of diagrams of compact $\infty$-topoi and perfect geometric morphisms.
\end{introproposition}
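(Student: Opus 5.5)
The plan is to compute the shape of the limit topos directly as a pro-anima and reduce everything to Proposition \ref{PushPullFormula}. Recall that for an $\infty$-topos $\X$ with global sections geometric morphism $\Gamma_{\X}\colon\X\rightarrow\An$, the shape $\Pi_{\infty}(\X)\in\Pro(\An)$ corresponds, under the identification of $\Pro(\An)$ with the opposite of the $\infty$-category of accessible, left exact functors $\An\rightarrow\An$, to the functor $\Gamma_{\X,\ast}\Gamma_{\X}^{\ast}$. In this model, a cofiltered limit $\lim_{\I^{\op}}$ in $\Pro(\An)$ is computed by the filtered colimit $\colim_{\I}$ of the corresponding functors. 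Filtered colimits of accessible left exact functors are again accessible and left exact, and they are computed pointwise. It therefore suffices to show
\begin{equation*}
\Gamma_{\X,\ast}\Gamma_{\X}^{\ast}\simeq\colim_{k\in\I}\Gamma_{k,\ast}\Gamma_k^{\ast},
\end{equation*}
naturally in $\An$, where $\X=\lim\X_{\bullet}$ in $\RTop$.

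Let $\X_{\bullet}\colon\I^{\op}\rightarrow\RTop$ be the diagram, with $\I$ filtered, each $\X_i$ compact and each transition map perfect. Passing to left adjoints $f^{\ast}$ gives a filtered diagram $\I\rightarrow\Pr^L$. Its colimit is the underlying $\infty$-category of $\X$, with structure maps $\iota_i$ the inverse image functors of the projections. Perfectness says exactly that the right adjoints $\varphi_{\ast}$ preserve filtered colimits, so the hypotheses of Proposition \ref{PushPullFormula} are satisfied. Since $\An$ is terminal in $\RTop$, we have $\Gamma_{\X}^{\ast}\simeq\iota_i\Gamma_i^{\ast}$ for any $i$, and $\Gamma_{\X,\ast}\simeq\Gamma_{i,\ast}\pi_i$, where $\pi_i$ is the right adjoint of $\iota_i$. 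Applying the push-pull formula with $j=i$ then gives
\begin{equation*}
\Gamma_{\X,\ast}\Gamma_{\X}^{\ast}\simeq\Gamma_{i,\ast}\colim_{(\varphi,\psi)\in\I_{\{i,i\}/}}\psi_{\ast}\varphi^{\ast}\Gamma_i^{\ast}.
\end{equation*}
Compactness of $\X_i$ means that $\Gamma_{i,\ast}$ preserves filtered colimits, so we may pull it inside the colimit. The terms then become $\Gamma_{i,\ast}\psi_{\ast}\varphi^{\ast}\Gamma_i^{\ast}\simeq\Gamma_{k,\ast}\,(\text{twisted by }\varphi,\psi)\,\Gamma_k^{\ast}$.

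The main obstacle is this last identification: matching the colimit over $\I_{\{i,i\}/}$ with $\colim_{k\in\I}\Gamma_{k,\ast}\Gamma_k^{\ast}$ and checking that the transition maps agree with those defining the pro-structure, namely the units of $\varphi^{\ast}\dashv\varphi_{\ast}$. First, the functor $\I_{\{i,i\}/}\rightarrow\I_{i/}$ restricting to the diagonal pairs $(\varphi,\varphi)$ is cofinal by filteredness, so the indexing reduces to $\I_{i/}$. On diagonal pairs, $\Gamma_{i,\ast}\varphi_{\ast}\varphi^{\ast}\Gamma_i^{\ast}\simeq\Gamma_{k,\ast}\Gamma_k^{\ast}$, because $\Gamma_i\circ\varphi\simeq\Gamma_k$. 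Second, $\I_{i/}\rightarrow\I$ is cofinal since $\I$ is filtered. Finally, one checks that the comparison maps in Proposition \ref{PushPullFormula} are induced by adjunction units, so they coincide with the maps $\Gamma_{k,\ast}\Gamma_k^{\ast}\rightarrow\Gamma_{l,\ast}\Gamma_l^{\ast}$ defining the diagram $\Pi_{\infty}(\X_{\bullet})$.

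This shows that the canonical map $\Pi_{\infty}(\lim\X_{\bullet})\rightarrow\lim\Pi_{\infty}(\X_{\bullet})$ is an equivalence, which is the claim.
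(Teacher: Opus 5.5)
Your proof is correct. It rests on the same key input as the paper: Proposition \ref{PushPullFormula}, compactness of the $\X_i$, terminality of $\An$ in $\RTop$, and the fact that filtered colimits in $\Fun^{\mathrm{lex,acc}}(\An,\An)$ are computed pointwise. The difference is in how you apply the push-pull formula.

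The paper adjoins an initial object $-\infty$ to $\I$ and sends it to $\An$. Compactness is exactly what lets this extended diagram satisfy the hypotheses of the push-pull formula. Evaluating the formula at $i=j=-\infty$ then shows directly that the unit-induced map $\colim_{k\in\I}(p_k)_{\ast}p_k^{\ast}\to p_{\ast}p^{\ast}$ is an equivalence. The only extra cost is checking, via weak contractibility of $\I$, that the extended diagram still computes the same limit in $\RTop$.

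You keep the original diagram, apply the formula at $i=j$ in $\I$, and use compactness to commute $\Gamma_{i,\ast}$ past the filtered colimit. This costs you two cofinality reductions. It also requires a separate identification of the resulting diagram and cocone with those defining $\lim\Pi_{\infty}(\X_{\bullet})$. You only sketch that identification, whereas in the paper's route it comes for free from the statement of the push-pull formula.

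Three minor points:
\begin{itemize}
\item The cofinal functor is the diagonal $\I_{i/}\to\I_{\{i,i\}/}$, $\varphi\mapsto(\varphi,\varphi)$; you wrote it in the opposite direction.
\item There is no actual ``twist'' for off-diagonal pairs. By terminality of $\An$, $\Gamma_i\circ\varphi\simeq\Gamma_k\simeq\Gamma_i\circ\psi$ for every cospan $(\varphi,\psi)$.
\item You identify the underlying category of $\lim\X_{\bullet}$ in $\RTop$ with the colimit in $\Pr^L$ without comment. This is a genuine input, supplied by \cite[Theorem 6.3.3.1]{HTT}, which the paper cites.
\end{itemize}
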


\subsection*{Proof Strategy} To prove Theorem \ref{IntroAdjunction}, we return to the failure of the Beck-Chevalley condition and note that the left adjoints do assemble into an \textit{oplax} natural transformation $L\colon\C_{\bullet}\Rightarrow^{\oplax}\D_{\bullet}$, which then induces an adjunction
\begin{equation*}
\begin{tikzcd}
\lim_{\I^{\op}}^{\oplax}L\colon\lim_{\I^{\op}}^{\oplax}\C_{\bullet}\Adj & \lim_{\I^{\op}}^{\oplax}\D_{\bullet}\colon\lim_{\I^{\op}}^{\oplax}R.
\end{tikzcd}
\end{equation*}
The technical crux of our proof is now that, in the case of a cofiltered diagram satisfying the hypotheses of Theorem \ref{IntroAdjunction}, the inclusion of the ordinary limit into the oplax limit is a reflective localization.

\begin{introtheorem}[Theorem \ref{MainThmDual}]\label{IntroReflection}
In the situation of Theorem \ref{IntroAdjunction}, the full subcategory inclusion $\lim_{\I^{\op}}\D_{\bullet}\subseteq\lim_{\I^{\op}}^{\oplax}\D_{\bullet}$ admits a left adjoint $L\colon\lim^{\oplax}_{\I^{\op}}\D_{\bullet}\rightarrow\lim_{\I^{\op}}\D_{\bullet}$, which is explicitly given levelwise by
\begin{equation*}
\pi_iL\simeq\colim_{(\varphi\colon i\rightarrow j)\in\I_{i/}}\varphi^{\ast}\pi_j\colon\lim\nolimits_{\I^{\op}}^{\oplax}\D_{\bullet}\rightarrow\D_i\qquad\text{ for all }i\in\I.
\end{equation*}
Furthermore, the resulting adjunction is natural in the diagram $\D_{\bullet}$.
\end{introtheorem}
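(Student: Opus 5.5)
We model the oplax limit as a category of sections: by straightening, $\D_\bullet\colon\I^{\op}\rightarrow\widehat{\Cat}_\infty$ corresponds to a cartesian fibration $p\colon\D\rightarrow\I$. Then $\lim^{\oplax}_{\I^{\op}}\D_\bullet$ is the $\infty$-category of all sections $\I\rightarrow\D$, and $\lim_{\I^{\op}}\D_\bullet$ is the full subcategory of cartesian sections. Concretely, an object $X$ of the oplax limit consists of $X_i\in\D_i$ together with maps $X_i\rightarrow\varphi^\ast X_j$ for $\varphi\colon i\rightarrow j$, coherently. It lies in the limit iff all these maps are equivalences. The plan is to build the candidate reflector $LX$ by the displayed formula, verify it lands in cartesian sections, and check the universal property pointwise.

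\emph{Construction.} Consider the evaluation $\ev\colon \I\times\Fun_{\I}(\I,\D)\rightarrow\D$ and, for each $i$, the diagram $\I_{i/}\rightarrow\D_i$, $(\varphi\colon i\rightarrow j)\mapsto\varphi^\ast X_j$. Its transition maps are $\varphi^\ast(X_j\rightarrow\psi^\ast X_k)$, using $(\psi\varphi)^\ast\simeq\varphi^\ast\psi^\ast$. Since $\I$ is filtered, $\I_{i/}$ is filtered and the inclusion $\I_{i/}\rightarrow\I$ is cofinal. Hence $\I_{i/}$-shaped colimits are $\I$-shaped colimits and exist in $\D_i$ by hypothesis. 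To produce the section $LX$ coherently rather than pointwise, I would use the twisted arrow category (or the fibration $\Ar(\I)\rightarrow\I$, $\varphi\mapsto$ source), form the section $\Ar(\I)\rightarrow\D$, $\varphi\mapsto\varphi^\ast X_{t(\varphi)}$, and take the relative left Kan extension along the source map $s\colon\Ar(\I)\rightarrow\I$. This is a cocartesian-type fibration with fibres $\I_{i/}$. Existence of fibrewise colimits plus their preservation by the $\varphi^\ast$ gives the relative Kan extension (HTT 4.3.2/4.3.3). The identities $\id_i$ give a unit $\eta\colon X\rightarrow LX$.

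\emph{Cartesianness.} For $\varphi\colon i\rightarrow j$ we must show $(LX)_i\rightarrow\varphi^\ast(LX)_j$ is an equivalence. Since $\varphi^\ast$ preserves $\I$-shaped colimits, $\varphi^\ast(LX)_j\simeq\colim_{\psi\in\I_{j/}}(\psi\varphi)^\ast X_k$. The map is induced by precomposition $\varphi^\ast\colon\I_{j/}\rightarrow\I_{i/}$, and this functor is cofinal because $\I$ is filtered: for $\alpha\colon i\rightarrow k$, the category of factorizations through $\varphi$ up to further maps is filtered, hence weakly contractible. So the comparison is an equivalence.

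\emph{Universal property and naturality.} For a cartesian $Y$, we must show $\Map(LX,Y)\rightarrow\Map(X,Y)$ is an equivalence. The cleanest route is to show $\eta_{LX}$ and $L\eta_X$ are equivalences (the idempotence criterion, HTT 5.2.7.4). If $X$ is cartesian, each diagram $\varphi\mapsto\varphi^\ast X_j$ is equivalent to the constant diagram on $X_i$, whose colimit over the weakly contractible $\I_{i/}$ is $X_i$; so $\eta$ is an equivalence on the limit. For $L\eta_X$, compute levelwise: $(L\eta_X)_i$ is a colimit over $\I_{i/}$ of $\varphi^\ast(\eta_X)_j$. By the cofinality from the previous step, both sides are colimits over $\I_{i/}\times_{\I}\I_{j/}$-type double diagrams that agree after a diagonal cofinality argument.

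Naturality in $\D_\bullet$ follows because the construction is a relative Kan extension, functorial in the fibration for colimit-preserving morphisms.

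I expect the main obstacle to be the coherent construction of $L$ as a functor landing in sections, together with the cofinality verifications. The fallback is to prove directly, for each $X$, that $X\rightarrow LX$ is a localization map, and then invoke HTT 5.2.7.8 to assemble $L$ without constructing it functorially by hand.
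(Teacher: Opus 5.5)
Your construction is, up to dualization, the paper's. The paper (in its cofiltered form) Kan-extends along the full inclusion $\Ar(\I)\hookrightarrow\Ar(\I^{\triangleleft})\times_{\I^{\triangleleft}}\I$ and then restricts along the cone-point section $c$. In your filtered setting the analogue is $\Ar(\I)\hookrightarrow\Ar(\I^{\triangleright})\times_{\I^{\triangleright}}\I$ with $\ev_0\dashv c$, which amounts to your relative left Kan extension along $s=\ev_0$.

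\textbf{Main gap: existence and the levelwise formula.} The source map $s\colon\Ar(\I)\to\I$ is a \emph{cartesian} fibration, with transports the precompositions $\varphi^{\circ}\colon\I_{j/}\to\I_{i/}$. It is cocartesian only if $\I$ has pushouts, which a filtered category need not have. So the fibrewise formula for left Kan extensions along cocartesian fibrations (HTT~4.3.3) does not apply. A priori the value at $i$ is a $p$-colimit over the whole comma category $\Ar(\I)\times_{\I}\I_{/i}$, whose objects $(\varphi\colon j\to k,\alpha\colon j\to i)$ lie over varying $j$. Since $p\colon\D\to\I$ is cartesian, you cannot push $\varphi^{\ast}X_k\in\D_j$ forward into $\D_i$. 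And the $\D_i$ only admit $\I$-shaped colimits. So neither existence nor the levelwise formula follows from what you wrote.

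The missing step is that the fibre inclusion $\I_{i/}\hookrightarrow\Ar(\I)\times_{\I}\I_{/i}$, $\varphi\mapsto(\varphi,\id_i)$, is cofinal. This is exactly where the paper's new technical input enters. Dualized, Proposition~\ref{CocartesianProper}(i) says that a cartesian fibration whose transport functors are cofinal is smooth. The $\varphi^{\circ}$ are cofinal by the same filteredness fact you use for cartesianness, so pulling back the cofinal map $\{\id_i\}\to\I_{/i}$ along $s$ gives the claim. A direct check by Quillen's Theorem~A also works: the relevant slice over $(\varphi,\alpha)$ is the category of cocones under $i\xleftarrow{\alpha}j\xrightarrow{\varphi}k$, which is filtered. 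After this reduction the diagram lies in $\D_i$ over a constant cone, so its $p$-colimit is just a colimit in $\D_i$. Preservation of colimits by the $\varphi^{\ast}$ is what makes $LX$ cartesian, not what makes the Kan extension exist.

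\textbf{Smaller gap: the universal property.} HTT~5.2.7.4 needs $L$ as an endofunctor together with a natural $\eta\colon\id\to L$, but you have only produced $LX$ objectwise. Your levelwise treatment of $L\eta_X$ presupposes that functoriality. Once it is available, that step is fine: the diagonal $\I_{i/}\to\Ar(\I_{i/})$ is right adjoint to $\ev_1$, hence cofinal.

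Your fallback via HTT~5.2.7.8 is the paper's strategy, but it requires proving $\Map(LX,Y)\simeq\Map(X,Y)$ directly for cartesian $Y$. Dualizing the paper, this is a short chain:
\begin{itemize}
\item Let $\tilde X_0(\varphi\colon i\to j)=\varphi^{\ast}X_j$, obtained from a cartesian lift of $X\ev_1$ along $\ev\colon\Ar(\I)\times[1]\to\I$. The universal property of the relative Kan extension identifies $\Map(LX,Y)$ with maps $\tilde X_0\to Y\ev_0$ over $\Ar(\I)$.
\item Since $Y$ is cartesian, $Y\ev_0\to Y\ev_1$ is a cartesian edge for $\Fun(\Ar(\I),\D)\to\Fun(\Ar(\I),\I)$. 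So these are the same as maps $\tilde X_0\to Y\ev_1$ lying over $\ev_0\Rightarrow\ev_1$.
\item The adjunction $\ev_1\dashv\Delta$, together with $\Delta^{\ast}\tilde X_0\simeq X$, turns these into $\Map(X,Y)$.
\end{itemize}
Yoneda then assembles the pointwise reflections into the functor $L$, which avoids the coherence problem you anticipated.
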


%Again, the failure of a section to not be cocartesian can be amended by replacing it with a suitable filtered colimit along its cocartesian transports.

\subsection*{Relation to other work}

The case $\I=\N$ of Theorem \ref{IntroAdjunction} appeared previously in the author's (unpublished) Master's thesis \cite[Appendix B]{Geiss}. The present paper extends this to all small, filtered $\infty$-categories and conceptualizes it.

Though our main results have been obtained independently, closely related statements appear elsewhere in the literature. A version of the construction in Theorem \ref{IntroReflection} appears in an unpublished note of Gaitsgory \cite[Lemma 1.3.6]{Gaitsgory} and a version of  the push-pull formula (Proposition \ref{IntroPushPull}) appears in recent work of Efimov \cite[Proposition 1.66,2)]{Efimov}, though either in a slightly different setting. Furthermore, Proposition \ref{IntroShapeCofiltered} is adapted from Hoyois \cite[Proposition 2.11, (1)]{Hoyois}. These references, however, only assert or sketch the relevant statements and do not provide a detailed construction or verification of the relevant higher coherences. To provide some of these details and explicate the underlying categorical mechanism is one of the purposes of the present paper.

A generalization of the push-pull formula (Proposition \ref{IntroPushPull}) to indexing $\infty$-categories $\I$ that are not filtered will be the subject of future joint work.

\subsection*{Conventions}

We will make use of the following conventions:

\begin{itemize}
\item The prefix \singlequote{$\infty$-} will be dropped henceforth, i.e. \singlequote{category} always refers to $\infty$-category.
\item The ($\infty$-)category of spaces/($\infty$-)groupoids will be referred to as the category of \textit{anima}, denoted $\An$.
\item The (large resp. huge) ($\infty$-)category of (small resp. large) ($\infty$-)categories is denoted $\Cat$ resp. $\widehat{\Cat}$.
\item The ($\infty$-)category of large ($\infty$-)categories and left resp. right adjoint functors is denoted $\widehat{\Cat}^L$ resp. $\widehat{\Cat}^R$. Its full subcategory on the presentable ($\infty$-)categories is denoted $\Pr^L$ resp. $\Pr^R$.
\item If $\C$ is an ($\infty$-)category and $f\colon x\rightarrow y$ a morphism in $\C$, then post- resp. pre-composition with $f$ will be denoted $f_{\circ}\colon\Map_{\C}(t,x)\rightarrow\Map_{\C}(t,y)$ resp. $f^{\circ}\colon\Map_{\C}(y,t)\rightarrow\Map_{\C}(x,t)$ for any $t\in\C$.
\item If $p\colon\E\rightarrow\C$ is a cocartesian resp. cartesian fibration of ($\infty$-)categories and $f\colon x\rightarrow y$ a morphism in $\C$, then we denote by $f_{\#}\colon\E_x\rightarrow\E_y$ resp. $f^{\ast}\colon\E_y\rightarrow\E_x$ the cocartesian resp. cartesian transport.
\item For any collection $\K$ of small ($\infty$-)categories, $\widehat{\Cat}(\K)$ resp. $\widehat{\Cat}\sqb{\K}$ is the ($\infty$-)category of ($\infty$-)categories admitting $\K$-shaped colimits resp. limits and functors preserving these. The (non-full) subcategory inclusions $\widehat{\Cat}(\K)\hookrightarrow\widehat{\Cat}$ resp. $\widehat{\Cat}\sqb{\K}\hookrightarrow\widehat{\Cat}$ create (small) limits.
\end{itemize}

\subsection*{Acknowledgments}

I would like to thank Phil Pützstück for helpful conversations, Georg Lehner for suggesting the topos-theoretic applications as well as pointing me to the relevant literature and Tobias Lenz for Example \ref{TobiasExample}. Furthermore, I would like to thank my advisor Thomas Nikolaus for his advice and continued support.

The author was funded by the Deutsche Forschungsgemeinschaft (DFG, German Research Foundation) – Project-ID 427320536 – SFB 1442, as well as under Germany’s Excellence Strategy EXC2044/2–390685587, Mathematics Münster: Dynamics–Geometry–Structure.

\section{Preliminaries}

In this section, we discuss a key technical input to our proof of Theorem \ref{IntroReflection}. Recall that a functor $\varphi\colon\C\rightarrow\D$ is called \textit{cofinal} resp. \textit{coinitial} if pre-composition with $\varphi$ does not change colimits resp. limits.\footnote{See \cite[Table 6.1]{HaugsengYetAnother} for varying terminological conventions.} This is equivalent, by a version of Quillen's Theorem A, to the relative slices $\C\times_{\D}\D_{d/}$ resp. $\C\times_{\D}\D_{/d}$ being weakly contractible for all $d\in\D$. Furthermore, a functor $p\colon\E\rightarrow\I$ is called \textit{smooth} resp. \textit{proper} if the pullback functor $p^{\ast}\colon\Cat_{/\I}\rightarrow\Cat_{/\E}$ preserves morphisms that lie over cofinal resp. coinitial morphisms in $\Cat$.

The next lemma follows from \cite[Lemma 5.5]{HHNLSpans} and \cite[Corollary 7.7.10]{HaugsengYetAnother}, but we give a direct proof.

\begin{lemma}\label{RealizationFibWeakEquivaence}
Let $p\colon\E\rightarrow\I$ be a cocartesian fibration. Then the following are equivalent:
\begin{enumerate}[label=(\alph*)]
\item The fibers of $p$ are weakly contractible.
\item $p$ is a universal weak equivalence, i.e. every base-change of $p$ is a weak equivalence.
%\begin{tikzcd}
%\E^{\prime}\arrow[r]\arrow[d,"p^{\prime}"]\pb & \E\arrow[d,"p"]\\
%\I^{\prime}\arrow[r] & \I.
%\end{tikzcd}
%\end{equation*}
\end{enumerate}
\end{lemma}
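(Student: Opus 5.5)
For (a)$\Rightarrow$(b), we first observe that condition (a) is stable under base change. If $f\colon\I'\rightarrow\I$ is any functor, then the pullback $p'\colon\E'=\E\times_{\I}\I'\rightarrow\I'$ is again a cocartesian fibration, and its fiber over $i'\in\I'$ is the fiber $\E_{f(i')}$ of $p$. It therefore suffices to prove that a cocartesian fibration with weakly contractible fibers is a weak equivalence. To do this, we use straightening: $p$ corresponds to a functor $F\colon\I\rightarrow\Cat$ with $F(i)\simeq\E_i$. By the standard computation of the groupoidification of a cocartesian fibration (Thomason's theorem in its $\infty$-categorical form, \cite[Corollary 3.3.4.3]{HTT}),
\begin{equation*}
\abs{\E}\simeq\colim_{\I}\abs{F(-)}\in\An.
\end{equation*}
Under this identification, the map $\abs{p}\colon\abs{\E}\rightarrow\abs{\I}$ is the map induced by $\abs{F(-)}\rightarrow\ast$. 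By hypothesis each $\abs{F(i)}$ is contractible, so this is a natural equivalence of functors $\I\rightarrow\An$. It follows that $\abs{\E}\simeq\colim_{\I}\ast\simeq\abs{\I}$. An alternative route avoids straightening: the inclusion of the fiber $\E_i\rightarrow\E\times_{\I}\I_{/i}$ admits a left adjoint given by cocartesian transport, so it is cofinal. Hence the relative slices of $p$ are weakly contractible, and Quillen's Theorem A (applied in its coinitial/over-slice form) shows that $p$ is a weak equivalence. I would present the straightening argument as the main one.

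For (b)$\Rightarrow$(a), we base change along the inclusion $\{i\}\hookrightarrow\I$ of an object. The resulting pullback is $\E_i\rightarrow\ast$, and by (b) it is a weak equivalence, so $\E_i$ is weakly contractible.

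The only real input is the colimit formula for $\abs{\E}$, or equivalently the cofinality of the inclusion of the fibers into the slices. I expect the main care to go into checking that the identification is compatible with the map to $\abs{\I}$, i.e. that it is natural over the terminal diagram. This holds because the formula is natural in the straightened functor, and $p$ is the unstraightening of the map $F\rightarrow\const_{\ast}$.
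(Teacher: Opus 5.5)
Your proposal is correct and follows essentially the paper's argument. Direction (b)$\Rightarrow$(a) is by base change along $\{i\}\hookrightarrow\I$. For (a)$\Rightarrow$(b), base-change stability of (a) reduces the claim to showing that $p$ itself is a weak equivalence, which both you and the paper deduce from $\abs{\E}\simeq\colim_{\I}\abs{\E_{\bullet}}$; the paper merely packages this as the unit $\E\rightarrow\E^l$ to the fiberwise groupoidification, which here is the terminal left fibration over $\I$. Your alternative route, using that cocartesian transport is left adjoint to $\E_i\hookrightarrow\E\times_{\I}\I_{/i}$ together with Quillen's Theorem A, is also valid and even shows that $p$ is coinitial.
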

\begin{proof}[Proof.]
It is clear that (b) implies (a) by considering the cospan $\E\rightarrow\I\leftarrow\{i\}$ for any $i\in\I$. In the converse direction, condition (a) is clearly stable under base-change, so it suffices to prove that $p$ itself is a weak equivalence. The left adjoint $(-)^l\colon\Cocart(\I)\rightarrow\LFib(\I)$ maps $p\colon\E\rightarrow\I$ to an equivalence $p^l\colon\E^l\rightarrow\I$, i.e. the terminal left fibration. This implies the conclusion as the unit map $\eta\colon\E\rightarrow\E^l$ over $\I$ is always a weak equivalence, e.g. since straightening-unstraightening identifies its geometric realization $|\eta|\colon|\E|\rightarrow|\E^l|$ with the canonical equivalence $|\colim_{i\in\I}\E_i|\rightarrow\colim_{i\in\I}|\E_i|$.
\end{proof}

\begin{proposition}\label{CocartesianProper}
Let $p\colon\E\rightarrow\I$ be a cocartesian fibration. Then:
\begin{enumerate}[label=(\roman*)]
\item $p$ is proper if and only if the cocartesian transport $f_{\#}\colon\E_i\rightarrow\E_j$ is coinitial for any $f\colon i\rightarrow j$ in $\I$.
\item $p$ is smooth.
\end{enumerate}
\end{proposition}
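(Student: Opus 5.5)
We reduce both parts to fiberwise statements via Quillen's Theorem A, together with the fact that the relevant slices over a cocartesian fibration can be computed fiberwise. Fix a pullback square along some $q\colon\I'\to\I$, giving $p'\colon\E'=\E\times_{\I}\I'\to\I'$. Then $p'$ is again a cocartesian fibration, with fibers $\E'_{i'}\simeq\E_{q(i')}$ and transports induced from those of $p$. So it suffices to understand when the base-change along a coinitial (resp. cofinal) functor $u\colon\I'\to\I''$ over $\I$ is again coinitial (resp. cofinal).

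For (ii), let $u\colon\I'\to\I''$ be cofinal over $\I$, and pull back $p$ to $\E'\to\E''$. By Theorem A we must show that, for $e\in\E''$ over $x\in\I''$, the relative slice $\E'\times_{\E''}\E''_{e/}$ is weakly contractible. The key step is to use the cocartesian lift of a morphism $x\to u(y)$ starting at $e$. This identifies $\E''_{e/}\times_{\I''}\I''_{x/}$-type data: the projection $\E'\times_{\E''}\E''_{e/}\to\I'\times_{\I''}\I''_{x/}$ is a cocartesian fibration whose fiber over $(y,\alpha\colon x\to u(y))$ is $(\E_{y})_{\alpha_\#e/}$. That fiber has an initial object, hence is weakly contractible. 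By Lemma \ref{RealizationFibWeakEquivaence} the projection is a weak equivalence, and its target $\I'\times_{\I''}\I''_{x/}$ is weakly contractible because $u$ is cofinal. This proves smoothness.

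For (i), the dual analysis uses slices $\E''_{/e}$. Now the fiber of $\E'\times_{\E''}\E''_{/e}\to\I'\times_{\I''}\I''_{/x}$ over $(y,\beta\colon u(y)\to x)$ is the relative slice $\E_y\times_{\E_x}(\E_x)_{/e}$, taken along $\beta_\#$. This is exactly the Theorem A slice testing coinitiality of $\beta_\#$, so if all transports are coinitial, Lemma \ref{RealizationFibWeakEquivaence} and coinitiality of $u$ give properness. The delicate point is checking that this projection is still a cocartesian fibration, or else a cartesian one, in which case we use the dual of Lemma \ref{RealizationFibWeakEquivaence}. I expect this to be the main obstacle, and the first thing I would try is to verify it via the mapping-space formula for morphisms in $\E$ over $\I$.

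For the converse in (i), take $f\colon i\to j$ and base-change $p$ along the coinitial inclusion $\{j\}\to\Delta^1\xrightarrow{f}\I$, where $j\mapsto 1$ is terminal, hence coinitial. Properness gives that $\E_j\to\E\times_{\I}\Delta^1$ is coinitial. The inclusion $\E_i\to\E\times_\I\Delta^1$ is a right adjoint, with left adjoint $f_\#$ given by cocartesian transport, so its slices over objects of $\E_j$ reduce to those of $f_\#$. Composing these coinitiality statements, and checking Theorem A slices directly over objects $e\in\E_j$, shows that $f_\#$ is coinitial.
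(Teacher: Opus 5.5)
\textbf{There is a genuine gap in the necessity half of (i); the rest is the paper's argument.}

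\textbf{The necessity step.} The inclusion $\{1\}\hookrightarrow\Delta^1$ of the terminal object is cofinal, not coinitial: its relative slice over $0$ is $\{1\}\times_{\Delta^1}\Delta^1_{/0}=\emptyset$. So the claim that properness makes $\E_j\hookrightarrow\E_f$ coinitial, where $\E_f=\E\times_{\I}\Delta^1$, is false as soon as $\E_i\neq\emptyset$. No object of $\E_j$ admits a map to an object $x\in\E_i$, so the relative slice of $\E_j\hookrightarrow\E_f$ over $x$ is empty.

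You also have the adjunction the wrong way round. It is $\E_j\hookrightarrow\E_f$ that is a right adjoint, with left adjoint $\E_f\to\E_j$ restricting to $f_{\#}$ on $\E_i$. The inclusion $\E_i\hookrightarrow\E_f$ is in general neither a left nor a right adjoint; it has a right adjoint only if $f_{\#}$ does.

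The fix is to base-change along the inclusion $\{0\}\hookrightarrow\Delta^1$ of the \emph{initial} object, which is coinitial. Properness then gives that $\E_i\hookrightarrow\E_f$ is coinitial. From there you can finish in either of two ways:
\begin{itemize}
\item Factor $f_{\#}$ as $\E_i\hookrightarrow\E_f\to\E_j$, and use that left adjoints are coinitial and that coinitial functors compose. This is the paper's argument.
\item Observe directly that for $e\in\E_j$ one has $\E_i\times_{\E_f}(\E_f)_{/e}\simeq\E_i\times_{\E_j}(\E_j)_{/e}$, since $\Map_{\E_f}(x,e)\simeq\Map_{\E_j}(f_{\#}x,e)$ for $x\in\E_i$. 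This is presumably what your last sentence was aiming at.
\end{itemize}

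\textbf{The other parts.} Your treatment of (ii) and of the sufficiency half of (i) coincides with the paper's proof. The Theorem A slices of the base-changed functor are base changes of $\E_{e/}\to\I_{p(e)/}$ resp. $\E_{/e}\to\I_{/p(e)}$. Their fibers $(\E_i)_{f_{\#}e/}$ resp. $\E_i\times_{\E_{p(e)}}(\E_{p(e)})_{/e}$ are weakly contractible, and Lemma~\ref{RealizationFibWeakEquivaence} finishes.

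The step you flag as the main obstacle is standard, and the paper simply cites it (Kerodon, Tag 0477). The map $\E_{/e}\to\I_{/p(e)}$ is a \emph{cocartesian} fibration: a morphism of $\E_{/e}$ is cocartesian whenever its image in $\E$ is $p$-cocartesian. This is exactly what your mapping-space check would verify.

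You should drop the cartesian alternative, because it fails in general. Take $\I=\Delta^1$ with $\E_0=\emptyset$ and $\E_1=\ast$. Then $\E_{/e}\to\I_{/1}$ is the inclusion $\{\mathrm{id}_1\}\hookrightarrow\I_{/1}$, which is cocartesian but not cartesian.
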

\begin{remark}
The case (ii) is well-known, see \cite[Proposition 4.1.2.15]{HTT}, and we've included it only for completeness. The case (i) is, to our knowledge, new and the one that we actually need in the next section.
\end{remark}
\begin{proof}[Proof.]
To prove (i), we first show necessity, so assume that $p$ is proper. Let $f\colon i\rightarrow j$ be any morphism in $\I$ and consider the diagram
\begin{equation*}
\begin{tikzcd}
\E_i\arrow[r,hookrightarrow]\arrow[d]\arrow[dr,very near start,phantom,"\lrcorner"] & \E_f\arrow[r]\arrow[d]\arrow[dr,very near start,phantom,"\lrcorner"] & \E\arrow[d,"p"]\\
\{0\}\arrow[r,hookrightarrow] & \sqb{1}\arrow[r,"f"] & \I,
\end{tikzcd}
\end{equation*}
in which the squares are cartesian and $\{0\}\hookrightarrow\sqb{1}$ is coinitial. Then $\E_i\hookrightarrow\E_f$ is coinitial by assumption, but $f_{\#}\colon\E_i\rightarrow\E_j$ factors as the composite of $\E_i\hookrightarrow\E_f$ and the left adjoint $\E_f\rightarrow\E_j$ to the fiber inclusion. Then, since left adjoints are coinitial, it follows that the composite $f_{\#}$ is also coinitial.

Next, we show sufficiency, so assume that $f_{\#}\colon\E_i\rightarrow\E_j$ is coinitial for any $f\colon i\rightarrow j$ in $\I$. To check that $p$ is proper, consider any diagram
\begin{equation*}
\begin{tikzcd}
\C^{\prime}\arrow[r,"\varphi^{\prime}"]\arrow[d]\arrow[dr,very near start,phantom,"\lrcorner"] & \D^{\prime}\arrow[r]\arrow[d]\arrow[dr,very near start,phantom,"\lrcorner"] & \E\arrow[d,"p"]\\
\C\arrow[r,"\varphi"] & \D\arrow[r] & \I,
\end{tikzcd}
\end{equation*}
in which the squares are cartesian and $\varphi$ is coinitial. For any $d^{\prime}\in\D^{\prime}$ with images $d\in\D$ and $e\in\E$, we have
\begin{equation*}
\C^{\prime}\times_{\D^{\prime}}\D^{\prime}_{/d^{\prime}}\simeq\C\times_{\D}\D^{\prime}_{/d^{\prime}}\simeq(\C\times_{\D}\D_{/d})\times_{\I_{/p(e)}}\E_{/e}.
\end{equation*}
Since $\varphi$ is coinitial, it suffices to show that $\C^{\prime}\times_{\D^{\prime}}\D^{\prime}_{/d^{\prime}}\rightarrow\C\times_{\D}\D_{/d}$ is a weak equivalence to conclude that $\varphi^{\prime}$ is also coinitial. This follows by Lemma \ref{RealizationFibWeakEquivaence} as $p^{\prime}\colon\E_{/e}\rightarrow\I_{/p(e)}$ is a cocartesian fibration by \cite[\href{https://kerodon.net/tag/0477}{Tag 0477}]{kerodon} and its fibers are indeed weakly contractible: the fiber of $p^{\prime}$ over $f\colon i\rightarrow p(e)$ is given by the pullback $\E_i\stackrel{f_{\#}}{\rightarrow}\E_{p(e)}\leftarrow(\E_{p(e)})_{/e}$, which is weakly contractible precisely since $f_{\#}$ is coinitial.

To prove (ii), we repeat the same argument using a cofinal map $\varphi$ instead. There, we have to apply Lemma \ref{RealizationFibWeakEquivaence} to $p^{\prime}\colon\E_{e/}\rightarrow\I_{p(e)/}$ instead, which is also a cocartesian fibration by \cite[\href{https://kerodon.net/tag/0477}{Tag 0477}]{kerodon} and whose fiber over $f\colon p(e)\rightarrow i$ is given by the slice $(\E_i)_{f_{\#}(e)/}$, which is always weakly contractible as it has an initial object.
\end{proof}

This proposition arose out of verifying the following example, which the author learned from Tobias Lenz.
\begin{example}\label{TobiasExample}
The map of posets $\min\colon[1]\times[1]\rightarrow[1]$ is the cocartesian fibration classifying the functor $[1]\rightarrow\Cat$ that selects the arrow $\Lambda_0^2\rightarrow\{\ast\}$. This is coinitial, since $\Lambda_0^2$ is weakly contractible, but does not admit a right adjoint, since $\Lambda_0^2$ does not have a terminal object. Thus, $\min$ is proper, but not a cartesian fibration.
\end{example}

\section{The Main Results}

In this section, we prove our main results. To fix notation, let $\Un\colon\Fun(\I,\widehat{\Cat})\iso\widehat{\Cocart}(\I)$ denote the (cocartesian) unstraightening equivalence. Recall that the limit of a diagram $\C_{\bullet}\colon\I\rightarrow\widehat{\Cat}$ is computed as the category $\Fun_{/\I}^{\cc}(\I,\Un(\C_{\bullet}))$ of cocartesian sections of the cocartesian unstraightening $p\colon\Un(\C_{\bullet})\rightarrow\I$. This is a full subcategory of the category $\Fun_{/\I}(\I,\Un(\C_{\bullet}))$ of \textit{all} sections of the cocartesian unstraightening $p\colon\Un(\C_{\bullet})\rightarrow\I$, which instead computes the \textit{lax} limit of $\C_{\bullet}$, denoted $\lim^{\lax}_{\I}\C_{\bullet}$.\footnote{This definition is all that is relevant to us, but see \cite{GHN} or \cite{HHNL} for more on lax limits.} For any $i\in\I$, we will denote by $\pi_i\colon\lim^{\lax}_{\I}\C_{\bullet}\rightarrow\C_i$ resp. $\pi_i\colon\lim_{\I}\C_{\bullet}\rightarrow\C_i$ the canonical projection to the $i$-th stage of the diagram, given by the evaluation of sections at $i$. To minimize the number of $\op$'s appearing, we will instead prove the dual versions of Theorems \ref{IntroAdjunction} and \ref{IntroReflection} and then dualize again.

\begin{theorem}\label{MainThm}
Let $\I$ be a small, cofiltered category and $\C_{\bullet}\colon\I\rightarrow\widehat{\Cat}\sqb{\I}$ a diagram. Then $\lim_{\I}\C_{\bullet}\subseteq\lim_{\I}^{\lax}\C_{\bullet}$ admits a right adjoint $R\colon\lim^{\lax}_{\I}\C_{\bullet}\rightarrow\lim_{\I}\C_{\bullet}$, which is given levelwise by
\begin{equation*}
\pi_iR\simeq\lim_{(\varphi\colon j\rightarrow i)\in\I_{/i}}\varphi_{\#}\pi_j\colon\lim\nolimits_{\I}^{\lax}\C_{\bullet}\rightarrow\C_i\qquad\text{ for all }i\in\I.
\end{equation*}
Furthermore, the resulting adjunction is natural in the diagram $\C_{\bullet}$.
\end{theorem}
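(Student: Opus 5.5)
The plan is to build $R$ as a relative right Kan extension and then verify coreflectivity using the criterion that a functor $R$ with a map $\varepsilon\colon \iota R\Rightarrow\id$ is right adjoint to a full inclusion $\iota$ as soon as $\varepsilon$ is an equivalence on the subcategory and $\varepsilon_{\iota R}$, $R\varepsilon$ are equivalences (HTT 5.2.7.4).

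Let $p\colon\E=\Un(\C_\bullet)\rightarrow\I$ be the cocartesian unstraightening. Let $s,t\colon\Ar(\I)\rightarrow\I$ be the source and target maps. Given a section $\sigma$ of $p$, form the functor $\widetilde\sigma\colon\Ar(\I)\rightarrow\E$ over $t$, with $\widetilde\sigma(\varphi\colon j\rightarrow i)=\varphi_{\#}\sigma(j)$. Coherently, $\widetilde\sigma$ is obtained from $\sigma\circ s$ by cocartesian transport along the natural transformation $s\Rightarrow t$, i.e.\ by choosing a cocartesian lift of $\Ar(\I)\times[1]\rightarrow\I$ starting at $\sigma s$. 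This construction is functorial in $\sigma$ and in the diagram $\C_\bullet$.

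Define $R\sigma$ as the $p$-right Kan extension of $\widetilde\sigma$ along $t$, and restrict back along the identity section $\I\rightarrow\Ar(\I)$.
\begin{itemize}
\item \emph{Reduction to the fibre.} For the pointwise formula we need $t^{-1}(i)=\I_{/i}\hookrightarrow \I_{i/}\times_{\I}\Ar(\I)$ to be coinitial. The target $\I_{i/}\times_{\I}\Ar(\I)\rightarrow\I_{i/}$ is a cocartesian fibration whose transport along $i\rightarrow k\rightarrow k'$ is postcomposition $\I_{/k}\rightarrow\I_{/k'}$. Since $\I$ is cofiltered, every such postcomposition functor is coinitial, because its relative slices are cofiltered and hence weakly contractible. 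Proposition \ref{CocartesianProper}(i) then says this fibration is proper. Pulling back the coinitial inclusion $\{\id_i\}\hookrightarrow\I_{i/}$ of the initial object shows that the fibre inclusion is coinitial, as required.
\item \emph{Existence of relative limits.} The $p$-limits of shape $\I_{/i}$ exist by HTT 4.3.1.10. Indeed, $\I_{/i}\rightarrow\I$ is coinitial for cofiltered $\I$, so $\C_i$ has $\I_{/i}$-shaped limits and the transition maps preserve them. This yields the formula $\pi_iR\sigma\simeq\lim_{(\varphi\colon j\rightarrow i)\in\I_{/i}}\varphi_{\#}\sigma(j)$.
\item \emph{$R\sigma$ is cocartesian.} For $\psi\colon i\rightarrow i'$, the transport $\psi_{\#}$ preserves $\I_{/i}$-limits. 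The comparison $\psi_{\#}\pi_iR\sigma\rightarrow\pi_{i'}R\sigma$ is therefore restriction of a limit along $\psi\circ-\colon\I_{/i}\rightarrow\I_{/i'}$, which is again coinitial, so the comparison is an equivalence.
\end{itemize}

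The counit $\varepsilon_\sigma\colon R\sigma\rightarrow\sigma$ is the projection of the limit onto the component at $\id_i$. Coherently, it is the counit of the Kan extension evaluated along the identity section.
\begin{itemize}
\item If $\sigma$ is cocartesian, then $\varphi_{\#}\sigma(j)\simeq\sigma(i)$ functorially in $\varphi$, so the diagram is equivalent to the constant one. Since $\I_{/i}$ has the terminal object $\id_i$, $\varepsilon_\sigma$ is an equivalence.
\item This already gives that $\varepsilon_{R\sigma}$ is an equivalence.
\item For $R\varepsilon_\sigma$, I would use the universal property of the Kan extension. Maps $c\rightarrow R\sigma$ from a cocartesian $c$ correspond to maps $c\circ t\simeq\widetilde c\rightarrow\widetilde\sigma$ over $\Ar(\I)$. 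These correspond to maps $c\rightarrow\sigma$, because $\widetilde{(-)}$ is a transport along $s\Rightarrow t$ and is therefore fully faithful on sections: a map of transported sections is determined by its restriction along $s$ to the identity arrows. This shows directly that composition with $\varepsilon_\sigma$ gives $\Map(c,R\sigma)\simeq\Map(c,\sigma)$.
\end{itemize}

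Naturality in $\C_\bullet$ holds because every step — unstraightening, cocartesian transport along $s\Rightarrow t$, and relative right Kan extension along $t$ — is functorial for maps of diagrams in $\Fun(\I,\widehat{\Cat}\sqb{\I})$, since these preserve the relevant limits. Concretely, I would perform the construction once for the universal family over $\Fun(\I,\widehat{\Cat}\sqb{\I})$.

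I expect the main obstacle to be the last bullet on the counit: identifying mapping spaces of sections over $\Ar(\I)$ with those over $\I$ coherently. The second obstacle is making the adjunction natural in the diagram rather than merely objectwise. For the first, I would try to express $\widetilde{(-)}$ as the right adjoint part of the restriction along $s$ in relative terms, reusing the adjunction $s\dashv$ (identity-section inclusion) between $\Ar(\I)$ and $\I$.
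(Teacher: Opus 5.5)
Your proposal is essentially the paper's proof. It uses the same cocartesian transport $\widetilde{\sigma}$ along $\ev_0\Rightarrow\ev_1$ and the same $p$-right Kan extension along the target; the paper makes this precise as a Kan extension along the full inclusion $\Ar(\I)\subseteq\Ar(\I^{\triangleleft})\times_{\I^{\triangleleft}}\I$, restricted along the cone-point section $c\dashv\ev_1$. The properness/coinitiality and cocartesianness checks are also the same, and your last bullet is exactly the paper's chain of mapping-space equivalences, so the HTT 5.2.7.4 framing is unnecessary (the paper assembles $R$ via Yoneda).

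One correction, at the step you flag as the main obstacle: the adjunction you need is $\Delta\dashv\ev_0$, i.e.\ the identity section is \emph{left} adjoint to the source map. Your "$s\dashv\Delta$" is false; the true adjunction on that side is $\ev_1\dashv\Delta$. The correct one gives $\ev_0^{*}\dashv\Delta^{*}$, so $\widetilde{(-)}$ is fully faithful as a \emph{left} adjoint of $\Delta^{*}$, using $\Delta^{*}\widetilde{\sigma}\simeq\sigma$.
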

\begin{proof}[Proof.]
We construct the adjoint $R$ locally, i.e. for every section $t\colon\I\rightarrow\Un(\C_{\bullet})$ of the cocartesian fibration $p\colon\Un(\C_{\bullet})\rightarrow\I$ we construct a cocartesian section $R(t)\colon\I\rightarrow\Un(\C_{\bullet})$ of $p$ and a natural equivalence
\begin{equation*}
\Map_{\Fun_{/\I}^{\cc}(\I,\Un(\C_{\bullet}))}(-,R(t))\simeq\Map_{\Fun_{/\I}(\I,\Un(\C_{\bullet}))}(-,t)\colon\Fun_{/\I}^{\cc}(\I,\Un(\C_{\bullet}))^{\op}\rightarrow\An.
\end{equation*}
The Yoneda lemma then (uniquely) assembles these $R(t)$ into a functor $R\colon\Fun_{/\I}(\I,\Un(\C_{\bullet})\rightarrow\Fun_{/\I}^{\cc}(\I,\Un(\C_{\bullet})$ in such a manner that the above equivalence becomes natural in $t$, presenting the desired adjunction.

First, we consider the cocartesian lifting problem
\begin{equation*}
\begin{tikzcd}
\Ar(\I)\times\{0\}\arrow[r,"t\ev_0"]\arrow[d] & \Un(\C_{\bullet})\arrow[d,"p"]\\
\Ar(\I)\times\sqb{1}\arrow[r,"\ev"]\arrow[ur,"\tilde{t}",dotted] & \I,
\end{tikzcd}
\end{equation*}
which admits the indicated solution. Using this map, we construct the lifting-extension problem
\begin{equation*}
\begin{tikzcd}
\Ar(\I)\arrow[r,"\tilde{t}_1"]\arrow[d] & \Un(\C_{\bullet})\arrow[d,"p"]\\
\Ar(\I^{\triangleleft})\times_{\I^{\triangleleft}}\I\arrow[r,"\ev_1"]\arrow[ur,dotted,"\overline{t}"] & \I,
\end{tikzcd}
\end{equation*}
which we will solve by a $p$-right Kan extension. To see that this exists, we use (the dual of) \cite[\href{https://kerodon.net/tag/0302}{Tag 0302}]{kerodon}. For any object $(-\infty\rightarrow i)\in\Ar(\I^{\triangleleft})\times_{\I^{\triangleleft}}\I$ (the condition is trivial for other objects), we identify the relative slice as
\begin{equation*}
\Ar(\I)\times_{(\Ar(\I^{\triangleleft})\times_{\I^{\triangleleft}}\I)}(\Ar(\I^{\triangleleft})\times_{\I^{\triangleleft}}\I)_{(-\infty\rightarrow i)/}\simeq\Ar(\I)\times_{\I}\I_{i/},
\end{equation*}
where the pullback is taken over the cocartesian fibration $\ev_1\colon\Ar(\I)\rightarrow\I$. For any map $\varphi\colon i\rightarrow j$ in $\I$, the associated cocartesian transport is the post-composition map $\varphi_{\circ}\colon\I_{/i}\rightarrow\I_{/j}$, which is coinitial as $\I$ is cofiltered, so $\ev_1$ is proper by Proposition \ref{CocartesianProper}. In light of the diagram
\begin{equation*}
\begin{tikzcd}
\I_{/i}\arrow[r]\arrow[d]\arrow[dr,very near start,phantom,"\lrcorner"] & \Ar(\I)\times_{\I}\I_{i/}\arrow[r]\arrow[d]\arrow[dr,very near start,phantom,"\lrcorner"] & \Ar(\I)\arrow[d,"\ev_1"]\\
\{\id_i\}\arrow[r] & \I_{i/}\arrow[r] & \I,
\end{tikzcd}
\end{equation*}
in which the squares are cartesian and the bottom left horizontal map is coinitial, this implies that the upper left horizontal map is coinitial. Returning to the existence criterion for $p$-right Kan extensions, we have to establish the existence of the dotted map which is a $p$-limit diagram in
\begin{equation*}
\begin{tikzcd}
\I_{/i}\arrow[r]\arrow[d] & \Ar(\I)\times_{\I}\I_{i/}\arrow[r] & \Un(\C_{\bullet})\arrow[d,"p"]\\
\I_{/i}^{\triangleleft}\arrow[rr,bend right=1.5em,"\const_i"']\arrow[r]\arrow[urr,dashed] & \br{\Ar(\I)\times_{\I}\I_{i/}}^{\triangleleft}\arrow[r]\arrow[ur,dotted]\arrow[from=u,crossing over] & \I.
\end{tikzcd}
\end{equation*}
The top left horizontal map is coinitial, so it suffices to establishes the existence of the dashed map which is a $p$-limit diagram by \cite[\href{https://kerodon.net/tag/02XQ}{Tag 02XQ}]{kerodon}. Since the bottom horizontal map is constant at $i$, this reduces to the existence of $\I_{/i}$-shaped limits in the fiber $\C_i$ by (the dual of) \cite[\href{https://kerodon.net/tag/069Z}{Tag 069Z}]{kerodon}. The category $\I$ is cofiltered, hence the projection $\I_{/i}\rightarrow\I$ is coinitial and this existence is the assumption that $\C_i\in\widehat{\Cat}\sqb{\I}$.

There is a relative adjunction
\begin{equation*}
\begin{tikzcd}
\I^{\triangleleft}\arrow[rr,shift left=0.2em,"c"]\arrow[dr,symbol={=}] & & \Ar(\I^{\triangleleft})\arrow[ll,shift left=0.2em,"\ev_1"]\arrow[dl,"\ev_1"]\\
& \I^{\triangleleft} & 
\end{tikzcd}
\end{equation*}
given by fiberwise selecting the cone point. The composite $\overline{t}c\vert_{\I}\colon\I\rightarrow\Ar(\I^{\triangleleft})\times_{\I^{\triangleleft}}\I\rightarrow\Un(\C_{\bullet})$ is our candidate for the reflection $R(t)$. Thus we have to verify that $\overline{t}c\vert_{\I}$ is a cocartesian section and satisfies the correct universal property.

To verify that the section $\overline{t}c\vert_{\I}$ is cocartesian, we unwind the previous construction: on objects, it is given by
\begin{equation*}
\overline{t}c(i)\simeq\mathop{p\mhyphen\lim}\limits_{(\varphi\colon j\rightarrow k,\psi\colon i\rightarrow k)\in\Ar(\I)\times_{\I}\I_{i/}}\varphi_{\#}t(j)\simeq\lim_{(\varphi\colon j\rightarrow i)\in\I_{/i}}\varphi_{\#}t(j),\qquad i\in\I,
\end{equation*}
and a map $\psi\colon i\rightarrow i^{\prime}$ in $\I$ induces the diagram (by making precise the naturality of \cite[\href{https://kerodon.net/tag/069Z}{Tag 069Z}]{kerodon} in the fiber)
\begin{equation*}
\begin{tikzcd}[column sep=small]
\mathop{p\mhyphen\lim}\limits_{(\varphi\colon j\rightarrow k, \psi\colon i\rightarrow k)\in\Ar(\I)\times_{\I}\I_{i/}}\varphi_{\#}t(j)\arrow[rr,"(\Ar(\I)\times_{\I}\psi^{\circ})^{\ast}"]\arrow[dd,"\sim"{sloped}] & & \mathop{p\mhyphen\lim}\limits_{(\varphi^{\prime}\colon j^{\prime}\rightarrow k^{\prime}, \psi^{\prime}\colon i^{\prime}\rightarrow k^{\prime})\in\Ar(\I)\times_{\I}\I_{i^{\prime}/}}\varphi^{\prime}_{\#}t(j^{\prime})\arrow[d,"\sim"{sloped}]\\
& & \lim\limits_{(\varphi^{\prime}\colon j^{\prime}\rightarrow i^{\prime})\in\I_{/i^{\prime}}}\varphi^{\prime}_{\#}t(j^{\prime})\arrow[d,"\sim"{sloped}]\\
\lim\limits_{(\varphi\colon j\rightarrow i)\in\I_{/i}}\varphi_{\#}t(j)\arrow[r,"\mathrm{cocart}"] & \psi_{\#}\br{\lim\limits_{(\varphi\colon j\rightarrow i)\in\I_{/i}}\varphi_{\#}t(j)}\arrow[r,"\sim"] & \lim\limits_{(\varphi\colon j\rightarrow i)\in\I_{/i}}\psi_{\#}\varphi_{\#}t(j),
\end{tikzcd}
\end{equation*}
where the maps to the lower right corner are given by restriction along $\psi_{\circ}\colon\I_{/i}\rightarrow\I_{/j}$ resp. the limit-comparison map for $\psi_{\#}$. These are equivalences since $\psi_{\circ}$ is coinitial (as $\I$ is cofiltered) resp. since $\psi_{\#}$ is a morphism in $\widehat{\Cat}\sqb{\I}$ and $\I_{/i}\rightarrow\I$ is coinitial (as $\I$ is cofiltered). Thus, $\overline{t}c\vert_{\I}\colon\I\rightarrow\Un(\C_{\bullet})$ is a cocartesian section.

To verify the universal property, we calculate, naturally in a cocartesian section $s\colon\I\rightarrow\Un(\C_{\bullet})$ of $p$, that
\begin{align}
\Map_{\Fun_{/\I}^{\cc}(\I,\Un(\C_{\bullet}))}(s,\overline{t}c\vert_{\I})&\simeq\Map_{\Fun_{/\I}(\I,\Un(\C_{\bullet}))}(s,\overline{t}c\vert_{\I})\nonumber\\
&\simeq\Map_{\Fun_{/\I}(\Ar(\I^{\triangleleft})\times_{\I^{\triangleleft}}\I,\Un(\C_{\bullet}))}(s\ev_1\vert_{\I},\overline{t})\\
&\simeq\Map_{\Fun_{/\I}(\Ar(\I),\Un(\C_{\bullet}))}(s\ev_1,\tilde{t}_1)\\
&\simeq\Map_{\Fun(\Ar(\I),\Un(\C_{\bullet}))}(s\ev_1,\tilde{t}_1)\times_{\Map_{\Fun(\Ar(\I),\I)}(\ev_1,\ev_1)}\{\ast\}\nonumber\\
&\simeq\Map_{\Fun(\Ar(\I),\Un(\C_{\bullet}))}(s\ev_0,\tilde{t}_1)\times_{\Map_{\Fun(\Ar(\I),\I)}(\ev_0,ev_1)}\{\ast\}\\
&\simeq\Map_{\Fun(\I,\Un(\C_{\bullet}))}(s,\tilde{t}_1\Delta)\times_{\Map_{\Fun(\I,\I)}(\id_{\I},\id_{\I})}\{\ast\}\\
&\simeq\Map_{\Fun_{/\I}(\I,\Un(\C_{\bullet}))}(s,t).\nonumber
\end{align}
These natural equivalences are as follows:
\begin{enumerate}[label=(\arabic*)]
\item Use the image of the adjunction $c\vert_{\I}\dashv\ev_1\vert_{\I}$ under the contravariant $2$-functor $\Fun_{/\I}(-,\Un(\C_{\bullet}))$.
\item Use the universal property of the $p$-right Kan extension.
\item Use that the canonical transformation $s\ev_0\rightarrow s\ev_1$ is a cocartesian morphism for the cocartesian fibration $p_{\circ}\colon\Fun(\Ar(\I),\Un(\C_{\bullet}))\rightarrow\Fun(\Ar(\I),\I)$ since $s$ is a cocartesian section of $p$.
\item Use the image of the adjunction $\Delta\dashv\ev_0$ under the contravariant $2$-functor $\Fun(-,\Un(\C_{\bullet}))$.
\end{enumerate}
This concludes the proof, and naturality is clear by construction.
\end{proof}
\begin{remark}
The proof uses that $\I$ is cofiltered in two ways, namely that the post-composition maps $\psi_{\circ}\colon\I_{/i}\rightarrow\I_{/j}$ for $\psi\colon i\rightarrow j$ in $\I$ are coinitial, and that the projections $\I_{/i}\rightarrow\I$ for $i\in\I$ are coinitial. The latter condition was only used to establish the existence or preservation of limits, so we can drop this condition if we assume the diagram $\C_{\bullet}$ to be $\widehat{\Cat}\sqb{\K}$-valued for $\K=\{\I_{/i}\colon i\in\I\}$ instead. The former condition, however, is essential. It characterizes precisely the class of categories that are \textit{codistilled} in the sense of \cite[Section 4]{Rezk} or \textit{coconfluent} in the sense of \cite{SattlerWärn} (cf. \cite[Section 9.8]{HaugsengYetAnother}). This class includes cofiltered categories, anima and in fact consists exactly of the anima-indexed limits of cofiltered categories \cite[Corollary 16.9]{Rezk}. The results in this paper all admit generalizations to such indexing categories, which we do not state explicitly for the sake of simplicity.
\end{remark}

Finally, we state the dual\footnote{In the sense that these correspond under applying the inverse equivalences $(-)^{\op}\colon\widehat{\Cat}\br{\I}\stackrel{\sim}{\longleftrightarrow}\widehat{\Cat}\sqb{\I}\colon(-)^{\op}$ to the diagrams.} of Theorem \ref{MainThm}.
\begin{theorem}\label{MainThmDual}
Let $\I$ be a small, filtered category and $\C_{\bullet}\colon\I^{\op}\rightarrow\widehat{\Cat}\br{\I}$ a diagram. Then $\lim_{\I^{\op}}\C_{\bullet}\subseteq\lim_{\I^{\op}}^{\oplax}\C_{\bullet}$ admits a left adjoint $L\colon\lim^{\oplax}_{\I^{\op}}\C_{\bullet}\rightarrow\lim_{\I^{\op}}\C_{\bullet}$, which is given levelwise by
\begin{equation*}
\pi_iL\simeq\colim_{(\varphi\colon i\rightarrow j)\in\I_{i/}}\varphi^{\ast}\pi_j\colon\lim\nolimits_{\I^{\op}}^{\oplax}\C_{\bullet}\rightarrow\C_i\qquad\text{ for all }i\in\I.
\end{equation*}
Furthermore, the resulting adjunction is natural in the diagram $\C_{\bullet}$.\qed
\end{theorem}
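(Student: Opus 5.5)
We deduce Theorem \ref{MainThmDual} formally from Theorem \ref{MainThm}, using the equivalence $(-)^{\op}\colon\widehat{\Cat}\br{\I}\iso\widehat{\Cat}\sqb{\I^{\op}}$, and we do not repeat the fibrational argument. Given $\C_{\bullet}\colon\I^{\op}\rightarrow\widehat{\Cat}\br{\I}$, we form $\C_{\bullet}^{\op}\colon\I^{\op}\rightarrow\widehat{\Cat}\sqb{\I^{\op}}$. Since $\I$ is filtered, $\I^{\op}$ is small and cofiltered. Hence Theorem \ref{MainThm}, applied with indexing category $\I^{\op}$, gives a right adjoint
\begin{equation*}
R\colon\lim\nolimits^{\lax}_{\I^{\op}}\C_{\bullet}^{\op}\rightarrow\lim\nolimits_{\I^{\op}}\C_{\bullet}^{\op}
\end{equation*}
to the inclusion. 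Levelwise it is $\pi_iR\simeq\lim_{(\varphi\colon j\rightarrow i)\in(\I^{\op})_{/i}}\varphi_{\#}\pi_j$, and it is natural in the diagram.

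The key identification is
\begin{equation*}
\br{\lim\nolimits^{\lax}_{\I^{\op}}\C_{\bullet}^{\op}}^{\op}\simeq\lim\nolimits^{\oplax}_{\I^{\op}}\C_{\bullet},
\end{equation*}
compatibly with the full subcategories $\lim_{\I^{\op}}\C_{\bullet}^{\op}\simeq(\lim_{\I^{\op}}\C_{\bullet})^{\op}$ and with the projections $\pi_i$. We take the oplax limit to be defined as sections of the cartesian unstraightening, or equivalently via this formula. To check it, observe that the cocartesian unstraightening of $\C_{\bullet}^{\op}$ over $\I^{\op}$ is fiberwise opposite to the cartesian unstraightening $q\colon\Un^{\ast}(\C_{\bullet})\rightarrow\I$. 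Passing to opposites sends sections $\I^{\op}\rightarrow\Un(\C_{\bullet}^{\op})$ to sections of the dual fibration, that is, of $q$. Cocartesian sections correspond to cartesian ones.

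Taking opposites turns the right adjoint $R$ into a left adjoint $L\colon\lim^{\oplax}_{\I^{\op}}\C_{\bullet}\rightarrow\lim_{\I^{\op}}\C_{\bullet}$ of the inclusion. For the formula we use three facts:
\begin{itemize}
\item $(\I^{\op})_{/i}=(\I_{i/})^{\op}$;
\item the cocartesian transport $\varphi_{\#}$ for $\C_{\bullet}^{\op}$ along $\varphi^{\op}$ is $(\varphi^{\ast})^{\op}$;
\item limits in $\C_i^{\op}$ are colimits in $\C_i$.
\end{itemize}
Together these give $\pi_iL\simeq\colim_{(\varphi\colon i\rightarrow j)\in\I_{i/}}\varphi^{\ast}\pi_j$. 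Naturality in $\C_{\bullet}$ follows because $(-)^{\op}$ is functorial and the identifications above are natural.

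The main obstacle is making the equivalence between oplax limits and the opposite of lax limits of opposites precise as a natural equivalence. This means checking how cocartesian and cartesian unstraightening interact with $(-)^{\op}$: the dual of a cocartesian fibration classifying $F$ is a cartesian fibration classifying $F^{\op}$, and this is natural in $F$. This is standard (e.g.\ via the dualization results in \cite{GHN}), so the remainder of the argument is bookkeeping.
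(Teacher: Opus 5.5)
Your proposal is correct and follows the paper's own route: the paper obtains Theorem~\ref{MainThmDual} from Theorem~\ref{MainThm} by applying $(-)^{\op}$ to the diagrams (see the footnote preceding the statement), with no further argument. The identifications you list — the oplax limit as the opposite of the lax limit of the opposite diagram (i.e.\ sections of the cartesian unstraightening), the slice, transport and (co)limit dualities, and the equivalence $(-)^{\op}\colon\widehat{\Cat}\br{\I}\iso\widehat{\Cat}\sqb{\I^{\op}}$, which is the precise form of what that footnote abbreviates — are exactly the bookkeeping this dualization requires.
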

\begin{addendum}\label{LocalizationCoUnit}
Furthermore, unwinding the proof yields explicit descriptions of the (co-)unit maps:
\begin{itemize}[leftmargin=*]
\item The counit map $L\vert_{\lim_{\I^{\op}}\C_{\bullet}}\rightarrow\id_{\lim_{\I^{\op}}\C_{\bullet}}$ is given levelwise by the equivalences
\begin{equation*}
\pi_iL\vert_{\lim_{\I^{\op}}\C_{\bullet}}\simeq\colim_{(\varphi\colon i\rightarrow j)\in\I_{i/}}\varphi^{\ast}\pi_j\vert_{\lim_{\I^{\op}}\C_{\bullet}}\simeq\colim_{(\varphi\colon i\rightarrow j)\in\I_{i/}}\pi_i\vert_{\lim_{\I^{\op}}\C_{\bullet}}\simeq\pi_i\vert_{\lim_{\I^{\op}}\C_{\bullet}}\qquad\text{ for all }i\in\I,
\end{equation*}
using that $\pi_i\vert_{\lim_{\I^{\op}}\C_{\bullet}}\iso\varphi^{\ast}\pi_j\vert_{\lim_{\I^{\op}}\C_{\bullet}}$ by the definition of cartesian sections, and that $\I_{i/}$ is weakly contractible.
\item The unit map $\id_{\lim_{\I^{\op}}^{\oplax}\C_{\bullet}}\rightarrow L$ is given levelwise by the maps
\begin{equation*}
\pi_i\rightarrow\colim_{(\varphi\colon i\rightarrow j)\in\I_{i/}}\varphi^{\ast}\pi_j\simeq\pi_iL
\end{equation*}
corresponding to the $\id_i$-th component of the colimit cocone, for all $i\in\I$.
\end{itemize}
\end{addendum}

\begin{addendum}\label{NaturalityInShape}
Following Addendum \ref{LocalizationCoUnit}, we can also obtain naturality in the diagram shape. Let
\begin{equation*}
\begin{tikzcd}
\I^{\op}\arrow[r,"\C_{\bullet}"]\arrow[d,"F"] & \widehat{\Cat}(\I)\arrow[d,hookrightarrow]\\
\I^{\prime,\op}\arrow[r,"\C_{\bullet}^{\prime}"] & \widehat{\Cat}(\I^{\prime})
\end{tikzcd}
\end{equation*}
be a diagram, so we obtain a commutative square
\begin{equation*}
\begin{tikzcd}[column sep=large]
\lim_{\I^{\prime,\op}}\C_{\bullet}^{\prime}\arrow[r,hookrightarrow]\arrow[d,"F^{\ast}"] & \lim_{\I^{\prime,\op}}^{\oplax}\C_{\bullet}^{\prime}\arrow[l,bend right=2em,dotted,"L^{\prime}"']\arrow[d,"F^{\ast}"]\\
\lim_{\I^{\op}}\C_{\bullet}\arrow[r,hookrightarrow] & \lim_{\I^{\op}}^{\oplax}\C_{\bullet}.\arrow[l,bend right=2em,dotted,"L" above]
\end{tikzcd}
\end{equation*}
Then the induced mate transformation $LF^{\ast}\Rightarrow F^{\ast}L^{\prime}$ is given levelwise by the canonical maps
\begin{equation*}
\begin{tikzcd}
\pi_iLF^{\ast}\simeq\colim\limits_{(\varphi\colon i\rightarrow j)\in\I_{i/}}F(\varphi)^{\ast}\pi_jF^{\ast}\arrow[r] & \pi_IF^{\ast}L^{\prime}\simeq\colim\limits_{(\varphi^{\prime}\colon F(i)\rightarrow j^{\prime})\in\I_{F(i)/}^{\prime}}\varphi^{\prime,\ast}\pi_{j^{\prime}}^{\prime}
\end{tikzcd}
\end{equation*}
induced on the indexing categories by $F_{i/}\colon\I_{i/}\rightarrow\I^{\prime}_{F(i)/}$, for all $i\in\I$.
\end{addendum}

\begin{example}
The application of Theorem \ref{MainThmDual} to the diagram
\begin{equation*}
\begin{tikzcd}
\dotsc\arrow[r,"\Omega"] & \An_{\ast}\arrow[r,"\Omega"] & \An_{\ast}
\end{tikzcd}
\end{equation*}
recovers the classical formula for the spectrification of a pre-spectrum, see e.g. \cite[p. 3]{LMS}. This example also already appears in \cite[Section 35.5]{Joyal}.
\end{example}

%\section{Adjunctions}

Now that Theorem \ref{IntroReflection} has been obtained, we derive Theorem \ref{IntroAdjunction} in the indicated manner. First, we observe that adjunctions are always compatible with \textit{lax} limits (see e.g. \cite[Proposition 5.1]{HorevYanovski}, \cite[Lemma 3.19]{Linskens}).

\begin{proposition}\label{AdjunctionOnLaxLimits}
Let $\I$ be a small category, $\C_{\bullet},\D_{\bullet}\colon\I\rightarrow\widehat{\Cat}$ be two diagrams of categories and $L\colon\C_{\bullet}\Rightarrow\D_{\bullet}$ a natural transformation that is pointwise given by left adjoints. Then $\lim^{\lax}_{\I}L\colon\lim^{\lax}_{\I}\C_{\bullet}\rightarrow\lim^{\lax}_{\I}\D_{\bullet}$ is again a left adjoint.

More precisely, the right adjoints $R_i\colon\D_i\rightarrow\C_i,\,i\in\I$ assemble into a lax natural transformation $R\colon\D_{\bullet}\Rightarrow^{\lax}\C_{\bullet}$, which induces the right adjoint $\lim^{\lax}_{\I}R\colon\lim^{\lax}_{\I}\D_{\bullet}\rightarrow\lim^{\lax}_{\I}\C_{\bullet}$.

Furthermore, a natural transformation of such natural transformations that satisfies the Beck-Chevalley condition levelwise induces on lax limits a commutative square satisfying the Beck-Chevalley condition.
\end{proposition}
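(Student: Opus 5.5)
We would prove Proposition \ref{AdjunctionOnLaxLimits} by passing to cocartesian unstraightenings and using relative adjunctions, so that lax limits become categories of sections. Let $p\colon\Un(\C_{\bullet})\rightarrow\I$ and $q\colon\Un(\D_{\bullet})\rightarrow\I$ be the cocartesian unstraightenings. The natural transformation $L$ unstraightens to a functor $\Un(L)\colon\Un(\C_{\bullet})\rightarrow\Un(\D_{\bullet})$ over $\I$ that preserves cocartesian morphisms, and whose restriction to the fiber over $i$ is the left adjoint $L_i$.

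The first step is to invoke the criterion for relative adjunctions (\cite[Proposition 7.3.2.6]{HA}, dual form). A functor over $\I$ that admits fiberwise right adjoints admits a right adjoint relative to $\I$ as soon as it preserves cocartesian edges, which holds here. This gives a functor $\Un(R)\colon\Un(\D_{\bullet})\rightarrow\Un(\C_{\bullet})$ over $\I$ with a relative adjunction $\Un(L)\dashv\Un(R)$: the unit and counit lie over identities, and $\Un(R)$ restricts to $R_i$ on fibers. In general $\Un(R)$ does not preserve cocartesian edges. A functor over $\I$ between cocartesian fibrations that need not preserve cocartesian edges is precisely the unstraightening of a lax natural transformation $\D_{\bullet}\Rightarrow^{\lax}\C_{\bullet}$ (this can be taken as the definition, or one can cite \cite{GHN}/\cite{HHNL}). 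Under this identification the lax naturality squares are the mates $\varphi_{\#}R_i\Rightarrow R_j\varphi_{\#}$ obtained from the naturality squares of $L$.

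The second step is to apply $\Fun_{/\I}(\I,-)$. This is a $2$-functor on $\Cat_{/\I}$, meaning post-composition respects natural transformations over $\I$, so it sends the relative adjunction to an adjunction
\begin{equation*}
\begin{tikzcd}
\Un(L)_{\circ}\colon\Fun_{/\I}(\I,\Un(\C_{\bullet}))\Adj & \Fun_{/\I}(\I,\Un(\D_{\bullet}))\colon\Un(R)_{\circ}.
\end{tikzcd}
\end{equation*}
The triangle identities are preserved because the unit and counit are fiberwise, hence post-composition with them yields sections again. By the definition of lax limits in this section, $\Un(L)_{\circ}=\lim^{\lax}_{\I}L$, which gives both claims of the first two parts.

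For the Beck--Chevalley statement, take a modification, i.e. a commutative square of transformations $\C_{\bullet}\rightarrow\C'_{\bullet}$ and $\D_{\bullet}\rightarrow\D'_{\bullet}$ compatible with $L$ and $L'$. It unstraightens to a commutative square in $\Cat_{/\I}$. The mate of this square with respect to the two relative adjunctions is a transformation over $\I$. Because the relative adjunction data restrict fiberwise to the pointwise adjunctions, its restriction to the fiber over $i$ is the levelwise mate. Equivalences over $\I$ are detected fiberwise, so the hypothesis makes this mate an equivalence. Applying the $2$-functor $\Fun_{/\I}(\I,-)$ then yields exactly the mate of the induced square on lax limits, since $2$-functors preserve mates; hence that mate is an equivalence too.

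The main obstacle is coherence in the first step: identifying $\Un(R)$ with a lax natural transformation and ensuring that the fiberwise mates assemble coherently. Rather than constructing the lax transformation by hand, we would use relative adjunctions throughout and treat the phrase "lax natural transformation" purely as a name for functors over $\I$. The remaining steps are then formal consequences of $2$-functoriality.
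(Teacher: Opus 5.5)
Your proposal is correct and follows essentially the same route as the paper: the dual of \cite[Proposition 7.3.2.6]{HA} gives a relative adjunction over $\I$ whose right adjoint is the lax transformation $R$, and applying the $2$-functor $\Fun_{/\I}(\I,-)$ then gives the adjunction on lax limits. For the Beck--Chevalley part, your argument (the fiberwise restriction of the relative mate is the levelwise mate, equivalences over $\I$ are detected fiberwise, and $2$-functors preserve mates) is the paper's argument via the jointly conservative fiber $2$-functors $\Fun_{/\I}(\{i\},-)$, spelled out in more detail.
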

\begin{proof}[Proof.]
The dual of \cite[Proposition 7.3.2.6]{HA} implies that there is a relative adjunction
\begin{equation*}
\begin{tikzcd}
\Un(\C_{\bullet})\arrow[rr,shift left=0.2em,"L"]\arrow[dr,"p"'] & & \Un(\D_{\bullet})\arrow[ll,shift left=0.2em,"R"]\arrow[dl,"q"]\\
& \I & 
\end{tikzcd}
\end{equation*}
in which $R$ is given fiberwise by the $R_i,\,i\in\I$ and describes the lax natural transformation $R\colon\D_{\bullet}\Rightarrow^{\lax}\C_{\bullet}$. Then applying the $2$-functor $\Fun_{/\I}(\I,-)$ yields the desired adjunction. The additional naturality statement is a consequence of the fact that the jointly conservative $2$-functors $\Fun_{/\I}(\{i\},-),\,i\in\I$ preserve the mate transformation.
\end{proof}

\begin{remark}\label{AdjunctionOnLaxLimitsRmk}
Additionally, in this situation, the adjunction $\begin{tikzcd}\lim^{\lax}_{\I}L\colon\lim^{\lax}_{\I}\C_{\bullet}\Adj & \lim^{\lax}_{\I}\D_{\bullet}\colon\lim^{\lax}_{\I}R\end{tikzcd}$ restricts to an adjunction $\begin{tikzcd}\lim_{\I}L\colon\lim_{\I}\C_{\bullet}\Adj & \lim_{\I}\D_{\bullet}\end{tikzcd}$ if and only if the naturality squares
\begin{equation*}
\begin{tikzcd}
\C_i\arrow[r,"L_i"]\arrow[d,"\varphi_{\#}"] & \D_i\arrow[d,"\varphi_{\#}"]\\
\C_j\arrow[r,"L_j"] & \D_j
\end{tikzcd}
\end{equation*}
are horizontally right adjointable for all $\varphi\colon i\rightarrow j$ in $\I$. In this case, the lax natural transformation $R\colon G\Rightarrow^{\lax}F$ is a posteriori a (strong) natural transformation and induces the right adjoint $\lim_{\I}R\colon\lim_{\I}\D_{\bullet}\rightarrow\lim_{\I}\C_{\bullet}$.

%The sufficiency of this condition also admits a direct proof. Indeed, the natural transformation can be curried to a functor $\I\rightarrow\Fun(\sqb{1},\widehat{\Cat})\simeq\widehat{\mathrm{Cart}}(\sqb{1}^{\op})$ and the assumptions are equivalent to this functor factoring through the (non-full) subcategory $\widehat{\mathrm{Bicart}}(\sqb{1}^{\op})\hookrightarrow\widehat{\mathrm{Cart}}(\sqb{1}^{\op})$, which is stable under (small) limits.
\end{remark}

Nonetheless, it is possible for $\lim_{\I}L\colon\lim_{\I}\C_{\bullet}\rightarrow\lim_{\I}\D_{\bullet}$ to admit a right adjoint not of this form, e.g. by using the colocalization of Theorems \ref{MainThm} as we are about to. A similar strategy is used in \cite{HorevYanovski} to prove their \singlequote{adjoint descent}, which makes no assumptions on $\I$ but instead assumes that the diagram $\C_{\bullet}$ is constant.

\begin{theorem}\label{Adjunction}
Let $\I$ be a small, cofiltered category, $\C_{\bullet},\D_{\bullet}\colon\I\rightarrow\widehat{\Cat}$ be two diagrams of categories such that $\C_{\bullet}$ factors through $\widehat{\Cat}\sqb{\I}\hookrightarrow\widehat{\Cat}$ and $L\colon\C_{\bullet}\Rightarrow\D_{\bullet}$ a natural transformation that is pointwise given by left adjoints. Then there is an induced adjunction
\begin{equation*}
\begin{tikzcd}
\lim_{\I}L\colon\lim_{\I}\C_{\bullet}\Adj & \lim_{\I}\D_{\bullet}\colon R.
\end{tikzcd}
\end{equation*}
More precisely, the right adjoint $R$ is given levelwise by
\begin{equation*}
\pi_iR\simeq\lim_{(\varphi\colon j\rightarrow i)\in\I_{/i}}\varphi_{\#}R_j\pi_j\colon\lim\nolimits_{\I}\D_{\bullet}\rightarrow\C_i\qquad\text{ for all }i\in\I.
\end{equation*}
Furthermore, a natural transformation of such natural transformations that satisfies the Beck-Chevalley condition levelwise induces on limits a commutative square satisfying the Beck-Chevalley condition.
\end{theorem}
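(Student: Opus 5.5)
The plan is to compose two adjunctions. Proposition \ref{AdjunctionOnLaxLimits} supplies the adjunction on lax limits, and Theorem \ref{MainThm} supplies the coreflection of the lax limit onto the ordinary limit for the diagram $\C_{\bullet}$, which is $\widehat{\Cat}\sqb{\I}$-valued by assumption. Concretely, I will work with the diagram
\begin{equation*}
\begin{tikzcd}
\lim_{\I}\C_{\bullet}\arrow[r,hookrightarrow,"\iota"] & \lim^{\lax}_{\I}\C_{\bullet}\arrow[r,shift left=0.2em,"\lim^{\lax}L"] & \lim^{\lax}_{\I}\D_{\bullet}\arrow[l,shift left=0.2em,"\lim^{\lax}R"]
\end{tikzcd}
\end{equation*}
together with the right adjoint $R^{\C}\colon\lim^{\lax}_{\I}\C_{\bullet}\rightarrow\lim_{\I}\C_{\bullet}$ of $\iota$ from Theorem \ref{MainThm}.

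First, I observe that $\lim_{\I}L$ is the restriction of $\lim^{\lax}_{\I}L$ to cocartesian sections. Since $L$ is a strong natural transformation, the induced functor $\Un(\C_{\bullet})\rightarrow\Un(\D_{\bullet})$ preserves cocartesian edges, so $\lim^{\lax}L\circ\iota\simeq\iota^{\D}\circ\lim_{\I}L$.

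Second, I will compute, naturally in $x\in\lim_{\I}\C_{\bullet}$ and $y\in\lim_{\I}\D_{\bullet}$:
\begin{equation*}
\Map(\lim_{\I}L(x),y)\simeq\Map(\lim^{\lax}L(\iota x),\iota^{\D}y)\simeq\Map(\iota x,\lim^{\lax}R(\iota^{\D}y))\simeq\Map(x,R^{\C}\lim^{\lax}R(\iota^{\D}y)).
\end{equation*}
The first step uses full faithfulness of $\iota^{\D}$, the second Proposition \ref{AdjunctionOnLaxLimits}, and the third Theorem \ref{MainThm}. Thus $R:=R^{\C}\circ\lim^{\lax}_{\I}R\circ\iota^{\D}$ is right adjoint to $\lim_{\I}L$. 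This is just the composite of adjunctions $\iota\dashv R^{\C}$ and $\lim^{\lax}L\dashv\lim^{\lax}R$, precomposed with the fully faithful $\iota^{\D}$. The levelwise formula then follows from the one in Theorem \ref{MainThm}: $\pi_iR\simeq\lim_{\I_{/i}}\varphi_{\#}\pi_j\lim^{\lax}R\,\iota^{\D}$, and $\pi_j\lim^{\lax}R\simeq R_j\pi_j$ because the relative right adjoint of Proposition \ref{AdjunctionOnLaxLimits} is fiberwise $R_j$. This gives $\lim_{\I_{/i}}\varphi_{\#}R_j\pi_j$.

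For the Beck--Chevalley statement, I take a morphism of such transformations, say $(\alpha,\beta)$ from $L\colon\C_{\bullet}\Rightarrow\D_{\bullet}$ to $L'\colon\C'_{\bullet}\Rightarrow\D'_{\bullet}$ with levelwise adjointable squares. The mate of the limit square is the pasting of three mates:
\begin{itemize}
\item the mate for the inclusions $\iota^{\D}$, which is trivial since those squares commute strictly;
\item the mate on lax limits, which is an equivalence by the last clause of Proposition \ref{AdjunctionOnLaxLimits};
\item the mate for the coreflections, $\lim(\alpha)R^{\C}\Rightarrow R^{\C'}\lim^{\lax}(\alpha)$.
\end{itemize}
The main obstacle is the third, coreflection mate. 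Here I would use the naturality of the adjunction in Theorem \ref{MainThm} in the diagram $\C_{\bullet}$: it says exactly that the square of coreflections commutes compatibly with the counits. One can also argue levelwise. Both sides at level $i$ are $\lim_{\I_{/i}}\varphi_{\#}(\cdot)$, and $\alpha_i$ commutes with these limits because $\alpha$ is a transformation valued in $\widehat{\Cat}\sqb{\I}$ and $\I_{/i}\rightarrow\I$ is coinitial. For this I need to assume, or check, that $\alpha$ lands in $\widehat{\Cat}\sqb{\I}$, which is part of the naturality hypothesis of Theorem \ref{MainThm}. A pasting of equivalences is an equivalence, which concludes the argument.
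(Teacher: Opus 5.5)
Your proposal is correct and follows the paper's proof: compose the coreflection $\iota\dashv R^{\C}$ from Theorem \ref{MainThm} with the adjunction $\lim^{\lax}_{\I}L\dashv\lim^{\lax}_{\I}R$ from Proposition \ref{AdjunctionOnLaxLimits}, note that the composite left adjoint lands in $\lim_{\I}\D_{\bullet}$, and restrict; the levelwise formula and the Beck--Chevalley clause then follow from those two results. You also note that the coreflection mate is invertible only when the components of $\alpha$ preserve $\I$-shaped limits, i.e.\ when $\alpha$ is a transformation of $\widehat{\Cat}\sqb{\I}$-valued diagrams. That is correct, and the paper leaves it implicit when it appeals to the naturality of Theorem \ref{MainThm}.
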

\begin{remark}
In light of the formula $\lim^{\lax}_{\I}\C_{\bullet}\simeq\lim_{\I}\Fun_{/\I}(\I_{\bullet/},\Un(\C_{\bullet}))$ under which $\lim_{\I}\C_{\bullet}\subseteq\lim^{\lax}_{\I}\C_{\bullet}$ is induced by the fully faithful left adjoints $\C_i\hookrightarrow\Fun_{/\I}(\I_{i/},\Un(\C_{\bullet}))$ to evaluation at $\id_i$, we could also deduce Theorem \ref{MainThm} from Theorem \ref{Adjunction}, i.e. the two theorems are logically equivalent.
\end{remark}
\begin{proof}[Proof.]
Take the composite of the adjunctions
\begin{equation*}
\begin{tikzcd}
\lim_{\I}\C_{\bullet}\arrow[r,shift left=0.2em,hookrightarrow] & \lim^{\lax}_{\I}\C_{\bullet}\arrow[l,shift left=0.2em]\arrow[r,"\lim^{\lax}_{\I}L",shift left=0.2em] & \lim^{\lax}_{\I}\D_{\bullet}\arrow[l,"\lim^{\lax}_{\I}R",shift left=0.2em]
\end{tikzcd}
\end{equation*}
of Theorem \ref{MainThm} and Proposition \ref{AdjunctionOnLaxLimits} respectively. The left adjoint factors through $\lim_{\I}\D_{\bullet}\subseteq\lim^{\lax}_{\I}\D_{\bullet}$, so the adjunction restricts to the desired one. The naturality is also immediate from these results.
\end{proof}

Finally, we state the dual of Theorem \ref{Adjunction}.
\begin{theorem}\label{AdjunctionDual}
Let $\I$ be a small, filtered category, $\C_{\bullet},\D_{\bullet}\colon\I^{\op}\rightarrow\widehat{\Cat}$ be two diagrams of categories such that $\D_{\bullet}$ factors through $\widehat{\Cat}\br{\I}\hookrightarrow\widehat{\Cat}$ and $R\colon\D_{\bullet}\Rightarrow\C_{\bullet}$ a natural transformation that is pointwise given by right adjoints. Then there is an induced adjunction
\begin{equation*}
\begin{tikzcd}
L\colon\lim_{\I^{\op}}\C_{\bullet}\Adj & \lim_{\I^{\op}}\D_{\bullet}\colon\lim_{\I^{\op}}R.
\end{tikzcd}
\end{equation*}
More precisely, the left adjoint $L$ is given levelwise by
\begin{equation*}
\pi_iL\simeq\colim_{(\varphi\colon i\rightarrow j)\in\I_{i/}}\varphi^{\ast}L_j\pi_j\colon\lim\nolimits_{\I^{\op}}\C\rightarrow\D_i\qquad\text{ for all }i\in\I.
\end{equation*}
Furthermore, a natural transformation of such natural transformations that satisfies the Beck-Chevalley condition levelwise induces on limits a commutative square satisfying the Beck-Chevalley condition.\qed
\end{theorem}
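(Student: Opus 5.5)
We deduce Theorem \ref{AdjunctionDual} from Theorem \ref{Adjunction} by passing to opposite categories, in the same way Theorem \ref{MainThmDual} is obtained from Theorem \ref{MainThm}. The inverse equivalences $(-)^{\op}\colon\widehat{\Cat}\br{\I}\leftrightarrow\widehat{\Cat}\sqb{\I^{\op}}$ will do the translation.

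\textbf{Step 1: dualize the input data.} The index $\I^{\op}$ is small and cofiltered. Define the diagrams $\C'_{\bullet}\coloneqq\D_{\bullet}^{\op}$ and $\D'_{\bullet}\coloneqq\C_{\bullet}^{\op}$, both of the form $\I^{\op}\rightarrow\widehat{\Cat}$.
\begin{itemize}
\item By hypothesis $\D_i$ admits $\I$-shaped colimits and $\varphi^{\ast}$ preserves them. Hence $\C'_{\bullet}$ factors through $\widehat{\Cat}\sqb{\I^{\op}}$, since $\I$-shaped colimits in $\D_i$ are exactly $\I^{\op}$-shaped limits in $\D_i^{\op}$.
\item Each right adjoint $R_i\colon\D_i\rightarrow\C_i$ gives a left adjoint $R_i^{\op}\colon\D_i^{\op}\rightarrow\C_i^{\op}$, whose right adjoint is $L_i^{\op}$.
\item Therefore $R^{\op}\colon\C'_{\bullet}\Rightarrow\D'_{\bullet}$ is a natural transformation that is pointwise given by left adjoints.
\end{itemize}

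\textbf{Step 2: apply Theorem \ref{Adjunction} and translate back.} Applying Theorem \ref{Adjunction} yields an adjunction $\lim_{\I^{\op}}R^{\op}\dashv R'$. Its right adjoint is given levelwise by
\begin{equation*}
\pi_iR'\simeq\lim_{(\varphi\colon j\rightarrow i)\in(\I^{\op})_{/i}}\varphi_{\#}L_j^{\op}\pi_j .
\end{equation*}
We then use the following identifications.
\begin{itemize}
\item $\lim(\X_{\bullet}^{\op})\simeq(\lim\X_{\bullet})^{\op}$, compatibly with the projections, because $(-)^{\op}$ is an automorphism of $\widehat{\Cat}$.
\item $(\I^{\op})_{/i}\simeq(\I_{i/})^{\op}$.
\item The cocartesian transport of the opposite diagram along $\varphi^{\op}$ is $(\varphi^{\ast})^{\op}$.
\item A limit indexed by $(\I_{i/})^{\op}$ in $\D_i^{\op}$ is a colimit indexed by $\I_{i/}$ in $\D_i$.
\end{itemize}
Taking opposites of the adjunction swaps the two sides, so $R'^{\op}\eqqcolon L$ becomes left adjoint to $\lim_{\I^{\op}}R$. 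Under these identifications we obtain $\pi_iL\simeq\colim_{(\varphi\colon i\rightarrow j)\in\I_{i/}}\varphi^{\ast}L_j\pi_j$, as claimed.

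\textbf{Step 3: the Beck--Chevalley statement.} This transfers by the same argument. Under $(-)^{\op}$, mates of left adjoints correspond to mates of right adjoints with the direction reversed, so a levelwise Beck--Chevalley square for the right adjoints becomes a levelwise Beck--Chevalley square for the left adjoints in the dual setting. It therefore suffices to quote the naturality clause of Theorem \ref{Adjunction} and dualize.

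\textbf{Main obstacle.} The only real subtlety is bookkeeping. We must check that the cartesian unstraightening of $\C_{\bullet}\colon\I^{\op}\rightarrow\widehat{\Cat}$, after fiberwise opposites, matches the cocartesian unstraightening used in Theorem \ref{Adjunction}, so that its transports are the $(\varphi^{\ast})^{\op}$. We must also verify that the projections $\pi_i$ correspond under the identification of limits. Both are formal consequences of $(-)^{\op}$ being an equivalence of $\widehat{\Cat}$ commuting with limits, and we will only indicate them.
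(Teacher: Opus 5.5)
Your proposal is correct and follows the paper's route: the paper records Theorem \ref{AdjunctionDual} without a separate proof, as the formal dual of Theorem \ref{Adjunction}, obtained by applying $(-)^{\op}$ to the diagrams, using that $\I^{\op}$ is cofiltered and that $(-)^{\op}$ identifies $\widehat{\Cat}\br{\I}$ with $\widehat{\Cat}\sqb{\I^{\op}}$. Your bookkeeping ($R_i^{\op}\dashv L_i^{\op}$, $(\I^{\op})_{/i}\simeq(\I_{i/})^{\op}$, and the swap of sides of the adjunction) is right, and the unstraightening issue you flag is not a real obstacle, since Theorem \ref{Adjunction} applied with index $\I^{\op}$ already uses the cocartesian unstraightening over $\I^{\op}$, whose transports are literally the functors $(\varphi^{\ast})^{\op}$.
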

\begin{addendum}\label{AdjunctionCoUnit}
Following Addendum \ref{LocalizationCoUnit}, we also obtain explicit descriptions of the (co-)unit maps:
\begin{itemize}[leftmargin=*]
\item The counit map $L(\lim_{\I^{\op}}R)\rightarrow\id_{\lim_{\I^{\op}}\D_{\bullet}}$ is given levelwise by the maps (we abbreviate $\lim_{\I^{\op}}R$ to $R$)
\begin{equation*}
\pi_iLR\simeq\colim_{(\varphi\colon i\rightarrow j)\in\I_{i/}}\varphi^{\ast}L_j\pi_jR\simeq\colim_{(\varphi\colon i\rightarrow j)\in\I_{i/}}\varphi^{\ast}L_jR_j\pi_j\rightarrow\colim_{(\varphi\colon i\rightarrow j)\in\I_{i/}}\varphi^{\ast}\pi_j\simeq\colim_{(\varphi\colon i\rightarrow j)\in\I_{i/}}\pi_i\simeq\pi_i,
\end{equation*}
where the non-invertible transformation is induced by the counits of the pointwise adjunctions, for all $i\in\I$.
\item The unit map $\id_{\lim_{\I^{\op}}\C_{\bullet}}\rightarrow(\lim_{\I^{\op}}R)L$ is given levelwise by the maps (we abbreviate $\lim_{\I^{\op}}R$ to $R$)
\begin{equation*}
\pi_i\rightarrow R_iL_i\pi\rightarrow\colim_{(\varphi\colon i\rightarrow j)\in\I_{i/}}\varphi^{\ast}R_jL_j\pi_j\simeq\colim_{(\varphi\colon i\rightarrow j)\in\I_{i/}}R_i\varphi^{\ast}L_j\pi_j\simeq R_i\colim_{(\varphi\colon i\rightarrow j)\in\I_{i/}}\varphi^{\ast}L_j\pi_j\simeq R_i\pi_iL\simeq\pi_iRL,
\end{equation*}
where the first transformation is induced by the units of the pointwise adjunctions and the second transformation corresponds to the $\id_i$-th component of the colimit cocone, for all $i\in\I$.
\end{itemize}
\end{addendum}

\begin{addendum}\label{NaturalityOfAdjunctionInShape}
Following Addendum \ref{NaturalityInShape}, we also obtain naturality in the diagram shape. Let
\begin{equation*}
\begin{tikzcd}
\Bigg(\I^{\op}\arrow[r,"F"] & \I^{\prime,\op}\arrow[r,"\C_{\bullet}^{\prime}",""{below,name=U},bend left=2.5em]\arrow[r,"\D_{\bullet}^{\prime}"',""{above,name=D},bend right=2.5em]\arrow[from=U,to=D,Rightarrow,"R^{\prime}"] & \widehat{\Cat}\Bigg)\arrow[r,symbol=\simeq] & \Bigg(\I^{\op}\arrow[r,"\C_{\bullet}",""{below,name=U},bend left=2.5em]\arrow[r,"\D_{\bullet}"',""{above,name=D},bend right=2.5em]\arrow[from=U,to=D,Rightarrow,"R"] & \widehat{\Cat}\Bigg)
\end{tikzcd}
\end{equation*}
be a diagram such that $\C_{\bullet}^{\prime}$ factors through $\widehat{\Cat}(\I^{\prime})\hookrightarrow\widehat{\Cat}$ and $\C_{\bullet}$ factors through $\widehat{\Cat}(\I)\hookrightarrow\widehat{\Cat}$, so we obtain a commutative square
\begin{equation*}
\begin{tikzcd}[column sep=large]
\lim_{\I^{\prime,\op}}\D_{\bullet}^{\prime}\arrow[r,"\lim_{\I^{\prime,\op}}R^{\prime}"']\arrow[d,"F^{\ast}"] & \lim_{\I^{\prime,\op}}\C_{\bullet}^{\prime}\arrow[d,"F^{\ast}"]\arrow[l,"L^{\prime}"',bend right=2em,dotted]\\
\lim_{\I^{\op}}\D_{\bullet}\arrow[r,"\lim_{\I^{\op}}R"'] & \lim_{\I^{\op}}\C_{\bullet}.\arrow[l,"L"',bend right=2em,dotted]
\end{tikzcd}
\end{equation*}
Then the induced mate transformation $LF^{\ast}\Rightarrow F^{\ast}L^{\prime}$ is given levelwise by the maps
\begin{equation*}
\begin{tikzcd}
\pi_iLF^{\ast}\simeq\colim\limits_{(\varphi\colon i\rightarrow j)\in\I_{i/}}F(\varphi)^{\ast}L_{F(j)}\pi_jF^{\ast}\arrow[r] & \pi_iF^{\ast}L^{\prime}\simeq\colim\limits_{(\varphi^{\prime}\colon F(i)\rightarrow j^{\prime})\in\I^{\prime}_{F(i)/}}\varphi^{\prime,\ast}L_{j^{\prime}}\pi_{j^{\prime}}^{\prime}
\end{tikzcd}
\end{equation*}
induced on the indexing categories by $F_{i/}\colon\I_{i/}\rightarrow\I^{\prime}_{F(i)/}$, for all $i\in\I$.
\end{addendum}

\section{Applications}\label{Applications}

In this section, we give a number of applications of our main results. First, we illustrate a number of \singlequote{stability} results. Some of these only use the existence of adjunctions (which in many situations may also be accessible by other methods, e.g. the adjoint functor theorem), but others crucially rely on the explicit formulas we have obtained. In the latter part of the section, we then prove the push-pull formula (Proposition \ref{IntroPushPull}) and explore its consequences.

The next proposition invokes the theory of lax-idempotent monads in $2$-categories (forthcoming \cite{AbellanBlom}, cf. \cite{BlomSlides}).
\begin{proposition}\label{LaxIdempotentMonads}
Let $\I$ be a small, filtered category, $T$ a lax-idempotent monad on the $2$-category $\widehat{\mathbf{Cat}}\br{\I}$ and $\C_{\bullet}\colon\I^{\op}\rightarrow\widehat{\Cat}\br{\I}$ a diagram of categories. If $\C_i$ is a $T$-algebra for all $i\in\I$, then $\lim_{\I^{\op}}\C_{\bullet}$ is a $T$-algebra.

Furthermore, a natural transformation of such diagrams that is given by $T$-algebra homomorphisms levelwise induces on limits a $T$-algebra homomorphism.
\end{proposition}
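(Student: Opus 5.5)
The plan is to use the standard characterization of algebras over a lax-idempotent (Kock–Zöberlein) monad: an object $\C$ of $\widehat{\mathbf{Cat}}\br{\I}$ is a $T$-algebra if and only if the unit $\eta_{\C}\colon\C\rightarrow T\C$ admits a left adjoint $a\dashv\eta_{\C}$ with invertible counit $a\eta_{\C}\simeq\id_{\C}$. Equivalently, it suffices that $\eta_{\C}$ admits a pseudo-retraction; for lax-idempotent monads such a retraction is automatically left adjoint to $\eta_{\C}$. Similarly, a morphism $F\colon\C\rightarrow\C^{\prime}$ between $T$-algebras is a homomorphism if and only if the square formed by $\eta_{\C}$, $\eta_{\C^{\prime}}$, $F$ and $TF$ is horizontally left adjointable, i.e.\ the mate $a^{\prime}\circ TF\Rightarrow F\circ a$ is invertible. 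I take both facts from \cite{AbellanBlom}.

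First, since $\C_i$ is a $T$-algebra for each $i$, the units $\eta_{\C_i}\colon\C_i\rightarrow T\C_i$ assemble into a natural transformation $\eta\colon\C_{\bullet}\Rightarrow T\C_{\bullet}$ of $\I^{\op}$-diagrams, and each component is a right adjoint with left adjoint $a_i$. Since $\C_{\bullet}$ factors through $\widehat{\Cat}\br{\I}$, Theorem \ref{AdjunctionDual} applies with $\D_{\bullet}=\C_{\bullet}$ and $R=\eta$. It yields an adjunction $L\dashv\lim_{\I^{\op}}\eta$ with $L\colon\lim_{\I^{\op}}T\C_{\bullet}\rightarrow\lim_{\I^{\op}}\C_{\bullet}$. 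By Addendum \ref{AdjunctionCoUnit}, the counit is given levelwise by
\[
\colim_{(\varphi\colon i\rightarrow j)\in\I_{i/}}\varphi^{\ast}a_j\eta_j\pi_j\rightarrow\colim_{(\varphi\colon i\rightarrow j)\in\I_{i/}}\varphi^{\ast}\pi_j\simeq\pi_i .
\]
This map is invertible because each pointwise counit $a_j\eta_j\rightarrow\id$ is invertible. Since $\widehat{\Cat}\br{\I}\hookrightarrow\widehat{\Cat}$ creates limits, $\lim_{\I^{\op}}\C_{\bullet}$ lies in $\widehat{\Cat}\br{\I}$, and so $T$ may be applied to it.

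Next, let $\kappa\colon T(\lim_{\I^{\op}}\C_{\bullet})\rightarrow\lim_{\I^{\op}}T\C_{\bullet}$ be the comparison map induced by applying $T$ to the projections $\pi_i$. By $2$-naturality of $\eta$, the composite $\kappa\circ\eta_{\lim\C_{\bullet}}$ is identified with $\lim_{\I^{\op}}\eta$. Setting $a\coloneqq L\circ\kappa$, we get $a\circ\eta_{\lim\C_{\bullet}}\simeq L\circ\lim_{\I^{\op}}\eta\simeq\id$, so $a$ is a pseudo-retraction of the unit. By the characterization above, $\lim_{\I^{\op}}\C_{\bullet}$ is therefore a $T$-algebra, with structure map $a$ left adjoint to $\eta_{\lim\C_{\bullet}}$.

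For the homomorphism statement, let $F_{\bullet}\colon\C_{\bullet}\Rightarrow\C^{\prime}_{\bullet}$ be levelwise a $T$-algebra homomorphism. The naturality squares of $\eta$ against $F_i$ and $TF_i$ are then levelwise horizontally left adjointable. The Beck-Chevalley clause of Theorem \ref{AdjunctionDual} gives that the induced square on limits, with sides $\lim\eta$, $\lim\eta^{\prime}$, $\lim F$ and $\lim TF$, satisfies Beck-Chevalley, i.e.\ $L^{\prime}\circ\lim TF\Rightarrow\lim F\circ L$ is invertible. Pasting this with the naturality square $\kappa^{\prime}\circ T(\lim F)\simeq\lim TF\circ\kappa$ exhibits the mate $a^{\prime}\circ T(\lim F)\Rightarrow(\lim F)\circ a$ as invertible, so $\lim F$ is a homomorphism.

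The main obstacle is this last pasting step. One has to check that the mate for $a=L\kappa$ really decomposes as the Beck-Chevalley mate for $L$ whiskered with $\kappa$; this needs the $2$-naturality of $\kappa$ and a careful comparison of the adjunction data. I would first try to make this precise using the uniqueness of adjoints to $\eta$, which holds since these adjunctions are essentially unique for lax-idempotent monads.
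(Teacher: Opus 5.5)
Your route is the paper's: characterize $T$-algebras and homomorphisms by left adjoints to units and the Beck--Chevalley condition, then apply Theorem~\ref{AdjunctionDual} (and its Beck--Chevalley clause) to $\eta\colon\C_{\bullet}\Rightarrow T\C_{\bullet}$. You are also right that this only gives a left adjoint to $\lim_{\I^{\op}}\eta=\kappa\circ\eta_{\lim\C_{\bullet}}$, not to $\eta_{\lim\C_{\bullet}}$ itself; the paper's one-line proof passes over this step. \textbf{But your way of closing the gap fails.} For a lax-idempotent monad, a pseudo-retraction of the unit is \emph{not} automatically left adjoint to it. Its existence does not even make the object an algebra.

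Take $T=(-)^{\triangleleft}$, freely adjoining an initial object, on $\widehat{\mathbf{Cat}}$ (i.e.\ $\I=\ast$). For $\C=\An_{\ast}$, let $\C^{\triangleleft}\to\C$ be the identity on $\C$, send the cone point to $S^0$, and use zero maps as cone legs. This is a retraction of $\eta_{\C}$. It is not left adjoint to $\eta_{\C}$, since such a left adjoint must send the cone point to an initial object. For the one-object category $BM$ with $M=\{1,c\}$, $c^2=c$, send the cone point to the object with cone legs $c$. This is again a retraction of $\eta_{BM}$, yet $BM$ has no initial object. So $a\eta_{\lim\C_{\bullet}}\simeq\id$ proves nothing by itself. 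Your homomorphism argument, which relies on the adjunction $a\dashv\eta_{\lim\C_{\bullet}}$ and its unit, is therefore also unsupported.

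What is missing is that $\kappa$ is fully faithful on maps into the image of the unit, i.e.\ $\Map(w,\eta_{\lim\C_{\bullet}}x)\simeq\Map(\kappa w,(\lim\eta)x)$. Granting this, $\Map(L\kappa w,x)\simeq\Map(w,\eta_{\lim\C_{\bullet}}x)$, so $a=L\kappa\dashv\eta_{\lim\C_{\bullet}}$. Here is one way to prove it.
\begin{itemize}
\item $B\coloneqq\lim_{\I^{\op}}T\C_{\bullet}$ is a $T$-algebra, being a limit of the free algebras $T\C_i$ along the homomorphisms $T\varphi^{\ast}$.
\item Its structure map $b$ satisfies $b\circ T(\lim\eta)\simeq\kappa$, because the projections are homomorphisms and $\mu T\eta\simeq\id$.
\item $T(\lim\eta)$ is fully faithful: $L\dashv\lim\eta$ is an adjunction in $\widehat{\mathbf{Cat}}\br{\I}$ with invertible counit, and $T$ preserves it.
\end{itemize}
Then
\begin{equation*}
\Map(\kappa w,(\lim\eta)x)\simeq\Map(T(\lim\eta)w,\eta_B(\lim\eta)x)\simeq\Map(T(\lim\eta)w,T(\lim\eta)\eta_{\lim\C_{\bullet}}x)\simeq\Map(w,\eta_{\lim\C_{\bullet}}x).
\end{equation*}
This also makes the unit of $a\dashv\eta_{\lim\C_{\bullet}}$ explicit. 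The mate for $\lim F$ then becomes the pasting of two invertible mates: the Beck--Chevalley mate from Theorem~\ref{AdjunctionDual}, and the mate expressing that $\lim TF_{\bullet}$ is a homomorphism between the limit algebras. That settles the pasting step you flagged.
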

\begin{proof}[Proof.]
This is a consequence of the fact that a category $\C\in\widehat{\mathbf{Cat}}(\I)$ is a $T$-algebra (this is indeed a property) if and only if the unit map $\eta\colon\C\rightarrow T\C$ admits a left adjoint and a functor $F\colon\C\rightarrow\D$ of $T$-algebras is a $T$-algebra homomorphism if and only if the resulting square $(F,TF)\colon(\eta_{\C}\colon\C\rightarrow T\C)\rightarrow(\eta_{\D}\colon\D\rightarrow T\D)$ satisfies the Beck-Chevalley condition, by applying Theorem \ref{AdjunctionDual} to the natural transformation $\eta\colon\C_{\bullet}\rightarrow T\C_{\bullet}$.
\end{proof}

\begin{corollary}\label{ApplicationCocomplete}
Let $\I$ be a small, filtered category, $\C_{\bullet}\colon\I^{op}\rightarrow\widehat{\Cat}\br{\I}$ a diagram and $\K$ a collection of (small) categories. If $\C_i$ admits $\K$-shaped colimits for all $i\in\I$, then $\lim_{\I^{\op}}\C_{\bullet}$ admits $\K$-shaped colimits.

Furthermore, a natural transformation of such diagrams that is given by $\K$-cocontinuous functors levelwise induces on limits a $\K$-cocontinuous functor.
\end{corollary}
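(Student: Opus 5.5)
The natural route is to express "admits $\K$-shaped colimits" as an adjunction and then apply Theorem \ref{AdjunctionDual}. Alternatively, one can view it as a $T$-algebra condition for a lax-idempotent monad and invoke Proposition \ref{LaxIdempotentMonads}. The direct adjunction route seems most robust, so I would treat each $K\in\K$ separately. A category $\C$ admits $K$-shaped colimits if and only if the constant diagram functor $\Delta_K\colon\C\rightarrow\Fun(K,\C)$ is a right adjoint. A functor $F$ between such categories preserves $K$-shaped colimits if and only if the square $(F,F_{\circ})\colon(\Delta_K\colon\C\rightarrow\Fun(K,\C))\rightarrow(\Delta_K\colon\D\rightarrow\Fun(K,\D))$ is horizontally left adjointable, i.e. satisfies the Beck-Chevalley condition.

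The first step is to set up the data for Theorem \ref{AdjunctionDual}. In its notation, take the target diagram $\C_{\bullet}$ (the one carrying the left adjoint) to be our $\C_{\bullet}$, and the source diagram $\D_{\bullet}$ to be $\Fun(K,\C_{\bullet})$. Then take $R=\Delta_K\colon\C_{\bullet}\Rightarrow\Fun(K,\C_{\bullet})$, which is pointwise a right adjoint with left adjoints $\colim_K$.

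Naturality of $\Delta_K$ in the transition maps is clear, since $\Fun(K,-)$ is a functor and $\Delta_K$ is induced by $K\rightarrow\ast$. The hypothesis of Theorem \ref{AdjunctionDual} concerns the diagram carrying the left adjoint, here $\C_{\bullet}$. It requires $\C_{\bullet}$ to land in $\widehat{\Cat}(\I)$, which is exactly our assumption; note that $\Fun(K,\C_{\bullet})$ then also lands there, because colimits in functor categories are computed pointwise. Theorem \ref{AdjunctionDual} then provides a left adjoint to $\lim_{\I^{\op}}\Delta_K\colon\lim_{\I^{\op}}\C_{\bullet}\rightarrow\lim_{\I^{\op}}\Fun(K,\C_{\bullet})$.

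The second step is to identify this functor with the diagonal of the limit. Since $\Fun(K,-)$ preserves limits, we have $\lim_{\I^{\op}}\Fun(K,\C_{\bullet})\simeq\Fun(K,\lim_{\I^{\op}}\C_{\bullet})$. Under this equivalence, $\lim\Delta_K$ corresponds to $\Delta_K$ of the limit, by naturality of $\Delta_K$ in $\C$. Hence $\lim_{\I^{\op}}\C_{\bullet}$ admits $K$-shaped colimits.

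For the second assertion, let $F\colon\C_{\bullet}\Rightarrow\C'_{\bullet}$ be levelwise $\K$-cocontinuous. This gives a natural transformation of the natural transformations $\Delta_K$ whose levelwise squares satisfy the Beck-Chevalley condition. The final clause of Theorem \ref{AdjunctionDual} then shows that the induced square on limits satisfies Beck-Chevalley. Under the identification from the second step, this means $\lim F$ preserves $K$-shaped colimits.

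I expect the main obstacle to be bookkeeping coherence rather than any new idea. Two points need care. First, $\Delta_K$ must be natural as a transformation of diagrams $\I^{\op}\rightarrow\widehat{\Cat}$; this holds because it comes from the natural transformation $\id\Rightarrow\Fun(K,-)$ of endofunctors of $\widehat{\Cat}$. Second, the identification of limits must be compatible with the Beck-Chevalley mates, which follows from the limit-preservation of $\Fun(K,-)$ being natural.
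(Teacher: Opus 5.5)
Your proposal is correct and is exactly the alternative argument the paper records in its proof. The paper applies Theorem \ref{AdjunctionDual} to $\const\colon\C_{\bullet}\Rightarrow\Fun(K,\C_{\bullet})$ for each $K\in\K$, with the Beck--Chevalley clause giving the statement about functors; its primary route instead goes through Proposition \ref{LaxIdempotentMonads}, using the monad that freely adds $\K\cup\{\I\}$-shaped colimits while preserving $\I$-shaped ones. One cosmetic point: in the theorem's notation $R\colon\D_{\bullet}\Rightarrow\C_{\bullet}$, your $\C_{\bullet}$ is the theorem's $\D_{\bullet}$ and $\Fun(K,\C_{\bullet})$ is its $\C_{\bullet}$, so your labels are swapped. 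This is harmless, since you put the hypothesis on the codomain of the left adjoint and also check that both diagrams land in $\widehat{\Cat}\br{\I}$.
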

\begin{proof}[Proof.]
This follows by applying Proposition \ref{LaxIdempotentMonads} to the lax-idempotent monad that freely adds $\K\cup\{\I\}$-shaped colimits whilst preserving $\I$-shaped colimits. Alternatively, we can directly apply Theorem \ref{AdjunctionDual} to the natural transformations $\const\colon\C_{\bullet}\Rightarrow\Fun(K,\C_{\bullet})$ for all $K\in\K$.
\end{proof}

%\begin{example}
%Applying this to the diagram
%\begin{equation*}
%\begin{tikzcd}
%\dotsc\arrow[r,"\Omega"] & \An_{\ast}\arrow[r,"\Omega"] & \An_{\ast},
%\end{tikzcd}
%\end{equation*}
%we obtain that the category $\Sp$ of spectra is cocomplete (and a way of obtaining colimits therein), which is not entirely obvious.
%\end{example}

\begin{proposition}\label{CartesianClosed}
Let $\I$ be a small, cofiltered category and $\C_{\bullet}\colon\I\rightarrow\widehat{\Cat}\sqb{\{\I,\ast\sqcup\ast\}}$ be a diagram. If $\C_i$ is cartesian closed for all $i\in\I$, then $\lim_{\I}\C_{\bullet}$ is cartesian closed.

Furthermore, a natural transformation of such diagrams that is given by cartesian closed functors levelwise induces on limits a cartesian closed functor.
\end{proposition}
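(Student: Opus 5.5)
We will reduce the statement to Theorem \ref{Adjunction} by expressing cartesian closedness through adjunctions. Since each $\C_i$ admits binary products and the transition maps $\varphi_{\#}$ preserve them, the limit $\C\colonequals\lim_{\I}\C_{\bullet}$ admits binary products, computed levelwise. Indeed, the subcategory inclusion $\widehat{\Cat}\sqb{\{\I,\ast\sqcup\ast\}}\hookrightarrow\widehat{\Cat}$ creates limits, and the projections $\pi_i$ preserve products. Cartesian closedness of $\C$ then means that for every $c\in\C$ the functor $c\times-\colon\C\rightarrow\C$ admits a right adjoint. The same limit-creation argument shows that $\C$ admits $\I$-shaped limits, which will be needed below.

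Fix $c=(c_i)_{i\in\I}\in\C$. The functors $c_i\times-\colon\C_i\rightarrow\C_i$ assemble into a natural transformation $c\times-\colon\C_{\bullet}\Rightarrow\C_{\bullet}$. The cleanest way to see this is to regard the product functor $\C_{\bullet}\times\C_{\bullet}\Rightarrow\C_{\bullet}$ as a natural transformation, which is possible because the transition maps preserve products, and then to evaluate it at the cartesian section $c$. Concretely, one can work with the diagram $\C_{\bullet}$ with values in categories under $\ast$, or equivalently restrict the natural transformation $\C_{\bullet}\times\C_{\bullet}\Rightarrow\C_{\bullet}$ along $\{c\}\times\C_{\bullet}$, which is available since $c$ is a point of the limit. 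Each component $c_i\times-$ is a left adjoint because $\C_i$ is cartesian closed. The source diagram $\C_{\bullet}$ factors through $\widehat{\Cat}\sqb{\I}$. Theorem \ref{Adjunction} therefore gives a right adjoint to $\lim_{\I}(c\times-)$, and this functor agrees with $c\times-$ on $\C$ because products are computed levelwise. The formula of Theorem \ref{Adjunction} shows that the adjoint is given by $\pi_i\underline{\Hom}(c,d)\simeq\lim_{(\varphi\colon j\rightarrow i)\in\I_{/i}}\varphi_{\#}\underline{\Hom}_j(c_j,d_j)$.

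For the second statement, let $F\colon\C_{\bullet}\Rightarrow\D_{\bullet}$ be a natural transformation whose components are cartesian closed functors, that is, they preserve products and the squares
\begin{equation*}
\begin{tikzcd}
\C_i\arrow[r,"c_i\times-"]\arrow[d,"F_i"] & \C_i\arrow[d,"F_i"]\\
\D_i\arrow[r,"F_i(c_i)\times-"] & \D_i
\end{tikzcd}
\end{equation*}
satisfy the Beck-Chevalley condition. The limit functor $\lim_{\I}F$ preserves binary products since these are computed levelwise. The squares above form a natural transformation between the natural transformations $c\times-$ and $F(c)\times-$ that satisfies the Beck-Chevalley condition levelwise. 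The final clause of Theorem \ref{Adjunction} then says that the induced square on limits satisfies the Beck-Chevalley condition, which is exactly the statement that $\lim_{\I}F$ preserves internal homs.

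The main obstacle is the coherence step of promoting the pointwise functors $c_i\times-$, and the squares relating them to $F_i$, to genuine natural transformations of $\I$-diagrams, rather than only specifying them objectwise. I would handle this by currying the product transformation as above. An alternative is to apply Proposition \ref{AdjunctionOnLaxLimits}-style naturality to the bifunctor, viewing $\C_{\bullet}$ as a diagram of cartesian monoidal categories via $\widehat{\Cat}\sqb{\{\ast\sqcup\ast\}}\simeq\mathrm{CAlg}^{\times}$.
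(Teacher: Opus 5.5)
Your proposal is correct and follows the paper's proof: the paper obtains $(-)\times(-)\colon\C_{\bullet}^{\times2}\Rightarrow\C_{\bullet}$ as the right adjoint of the diagonal $\Delta\colon\C_{\bullet}\Rightarrow\C_{\bullet}^{\times2}$, whose naturality squares satisfy Beck--Chevalley precisely because the transition maps preserve binary products, and this settles the coherence issue you flag; it then precomposes with $x\times\id$ for $x\colon\ast\Rightarrow\C_{\bullet}$ and applies Theorem~\ref{Adjunction}, as in your currying argument. For the functoriality clause, you are implicitly using that each $F_i$ preserves $\I$-shaped limits (the levelwise mate on limits is the comparison map of $F_i$ for the $\I_{/i}$-shaped limit in your internal-hom formula), which holds because $F$ is a transformation of $\widehat{\Cat}\sqb{\{\I,\ast\sqcup\ast\}}$-valued diagrams.
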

\begin{proof}[Proof.]
The diagonal is a natural transformation $\Delta\colon\C_{\bullet}\Rightarrow\C_{\bullet}^{\times2}$ whose naturality squares satisfy the Beck-Chevalley condition as the diagram factors through $\widehat{\Cat}\sqb{\ast\sqcup\ast}\hookrightarrow\widehat{\Cat}$, so there is an adjoint transformation $(-)\times(-)\colon \C_{\bullet}^{\times2}\Rightarrow \C_{\bullet}$. For any $x\in\lim_{\I}\C_{\bullet}$ corresponding to a natural transformation $x\colon\ast\Rightarrow \C_{\bullet}$
, we consider the composite transformation
\begin{equation*}
\begin{tikzcd}[column sep=large]
\C_{\bullet}\arrow[r,Rightarrow,"x\times\id"] & \C_{\bullet}^{\times2}\arrow[r,Rightarrow,"(-)\times(-)"] & \C_{\bullet}.
\end{tikzcd}
\end{equation*}
This natural transformation is given levelwise by the maps $\pi_i(x)\times(-)\colon\C_i\rightarrow\C_i,\,i\in\I$, which admit right adjoints by assumption. Then Theorem \ref{Adjunction} implies that the induced map $x\times(-)\colon\lim_{\I}\C_{\bullet}\rightarrow\lim_{\I}\C_{\bullet}$ on limits also admits a right adjoint, i.e. $\lim_{\I}\C_{\bullet}$ is cartesian closed.
\end{proof}
%\begin{example}
%The category $\omega\Cat\simeq(\dotsc\rightarrow(n+1)\Cat\rightarrow n\Cat\rightarrow\dotsc\rightarrow1\Cat\rightarrow0\Cat)$ is cartesian closed.
%\end{example}

\begin{proposition}
Let $\kappa$ be a regular cardinal, $\I$ a small, $\kappa$-filtered category and $\C_{\bullet}\colon\I^{\op}\rightarrow\widehat{\mathrm{Acc}}_{\kappa}$ a diagram of $\kappa$-accessible categories and $\kappa$-accessible functors. If $\C_i$ is a presentable category for all $i\in\I$, then $\lim_{\I}\C_{\bullet}$ is a presentable category.
\end{proposition}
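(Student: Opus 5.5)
The plan is to use the characterization of presentability as accessibility together with cocompleteness, and to get each of these two properties from a separate earlier result. (The limit is of course indexed by $\I^{\op}$, i.e. it is $\lim_{\I^{\op}}\C_{\bullet}$.)

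\emph{Step 1: accessibility of the limit.} Since $\kappa$-filtered categories are in particular filtered, $\C_{\bullet}$ factors through $\widehat{\Cat}\br{\I}$. The $\kappa$-accessible functors preserve $\kappa$-filtered colimits, and therefore $\I$-shaped colimits. I will invoke \cite[Proposition 5.4.7.3]{HTT}: the inclusion $\widehat{\mathrm{Acc}}\hookrightarrow\widehat{\Cat}$ preserves small limits. This gives that $\lim_{\I^{\op}}\C_{\bullet}$ is accessible. If one wants the sharper statement that it is $\kappa$-accessible, with the projections $\kappa$-accessible, that needs more care. It is not needed, because presentability only asks for accessibility together with cocompleteness.

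\emph{Step 2: cocompleteness of the limit.} Each $\C_i$ is presentable and hence admits all small colimits. Corollary \ref{ApplicationCocomplete}, applied with $\K$ the collection of all small categories, shows that $\lim_{\I^{\op}}\C_{\bullet}$ admits all small colimits. Its hypotheses are exactly that the diagram lands in $\widehat{\Cat}\br{\I}$, which Step 1 has already verified. Concretely, colimits in the limit are computed by the formula of Theorem \ref{AdjunctionDual} applied to $\const\colon\C_{\bullet}\Rightarrow\Fun(K,\C_{\bullet})$.

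\emph{Step 3: conclusion.} An accessible category admitting small colimits is presentable, by \cite[Definition 5.5.0.1]{HTT}, which finishes the argument.

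The main obstacle is Step 1. We must make sure the cited limit-preservation result applies to functors that are merely $\kappa$-accessible, since they need not preserve $\kappa$-compact objects. The relevant statement in \cite{HTT} is phrased for accessible functors, so it should apply directly. If it does not, I would replace $\kappa$ by a larger regular cardinal $\lambda\gg\kappa$ such that all the functors preserve $\lambda$-compact objects. This works because $\I$ is small, so only a set of functors is involved. Note that the $\I$-colimit preservation from Step 2 only uses $\kappa$-filteredness, which is untouched by enlarging the cardinal in Step 1.
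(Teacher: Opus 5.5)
Your proof is correct, but it is organized differently from the paper's.

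The paper never proves accessibility of $\lim_{\I^{\op}}\C_{\bullet}$ through a general closure theorem. It proceeds in three steps:
\begin{enumerate}
\item It notes that the oplax limit $\lim^{\oplax}_{\I^{\op}}\C_{\bullet}$ is presentable: accessible as a category of sections, and cocomplete with colimits computed levelwise.
\item It uses Theorem \ref{MainThmDual} to exhibit $\lim_{\I^{\op}}\C_{\bullet}$ as a reflective subcategory of it.
\item It checks that this subcategory is closed under $\kappa$-filtered colimits, using levelwise colimits and the $\kappa$-accessibility of the transports $\varphi^{\ast}$. So it is an accessible localization of a presentable category, hence presentable.
\end{enumerate}

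You instead split presentability into its two defining halves. Accessibility comes from the closure of accessible categories under small limits along accessible functors \cite[Proposition 5.4.7.3]{HTT}, which uses nothing about $\I$. Cocompleteness comes from Corollary \ref{ApplicationCocomplete}, which ultimately rests on the same localization theorem: colimits in the limit are reflections of levelwise colimits.

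Your route is more modular, and it makes visible that filteredness of $\I$ enters only through cocompleteness. The obstacle you flag is moot: that result of \cite{HTT} is stated for arbitrary accessible functors, so no enlargement of $\kappa$ is needed.

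The paper's route gives more structure in exchange. It identifies $\lim_{\I^{\op}}\C_{\bullet}$ as an accessible localization of the presentable category $\lim^{\oplax}_{\I^{\op}}\C_{\bullet}$, with an explicit localization functor and an inclusion preserving $\kappa$-filtered colimits. It also needs only the accessibility of a category of sections, not the general limit-closure theorem for accessible categories.
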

\begin{proof}[Proof.]
The oplax limit $\lim_{\I^{\op}}^{\oplax}\C_{\bullet}$ is an accessible category by \cite[Corollary 5.4.7.17]{HTT} and it is cocomplete by \cite[Proposition 5.1.2.2]{HTT} since the $\C_i,\,i\in\I$ are cocomplete, whence it is presentable. It then suffices to prove that $\lim_{\I^{\op}}\C_{\bullet}$ is an accessible localization of $\lim^{\oplax}_{\I^{\op}}\C_{\bullet}$, i.e. by Theorem \ref{MainThmDual} that it is closed under $\kappa$-filtered colimits. The colimits in the oplax limit are computed levelwise and the cartesian transport functors $\varphi^{\ast}\colon\C_j\rightarrow\C_i$ are $\kappa$-accessible for all maps $\varphi\colon i\rightarrow j$ in $\I$, so we see that the cartesian sections are stable under $\kappa$-filtered colimits, as desired.
\end{proof}

Furthermore, we can also use the explicit formulas to obtain non-formal properties.
\begin{proposition}\label{FullyFaithful}
In the setting of Theorem \ref{AdjunctionDual}, if the left adjoints $L_i\colon\C_i\rightarrow\D_i$ are fully faithful for all $i\in\I$, then the left adjoint $L\colon\lim_{\I^{\op}}\C_{\bullet}\rightarrow\lim_{\I^{\op}}\D_{\bullet}$ is again fully faithful.
\end{proposition}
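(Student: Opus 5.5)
We show that the unit $\id_{\lim_{\I^{\op}}\C_{\bullet}}\rightarrow(\lim_{\I^{\op}}R)L$ is an equivalence. Since the projections $\pi_i$, $i\in\I$, are jointly conservative on $\lim_{\I^{\op}}\C_{\bullet}$, it suffices to check this levelwise. Addendum \ref{AdjunctionCoUnit} describes $\pi_i$ of the unit as the composite
\begin{equation*}
\pi_i\rightarrow R_iL_i\pi_i\rightarrow\colim_{(\varphi\colon i\rightarrow j)\in\I_{i/}}\varphi^{\ast}R_jL_j\pi_j\simeq\pi_iRL,
\end{equation*}
where $\varphi^{\ast}$ on the middle term denotes the transition map of $\C_{\bullet}$, and we have used $R_i\varphi^{\ast}\simeq\varphi^{\ast}R_j$ (naturality of $R$). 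We also used that $R_i$ commutes with the colimit, which is already built into the identification with $\pi_iRL$ in the addendum.

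Since each $L_j$ is fully faithful, every unit $\id_{\C_j}\rightarrow R_jL_j$ is an equivalence. The colimit diagram $(\varphi\colon i\rightarrow j)\mapsto\varphi^{\ast}R_jL_j\pi_j$ is therefore equivalent, via the units, to the diagram $\varphi\mapsto\varphi^{\ast}\pi_j$ restricted to $\lim_{\I^{\op}}\C_{\bullet}$. The naturality of the units with respect to the transition maps, i.e. the mate compatibility making $\varphi^{\ast}\eta_j$ correspond to $\eta_i$ under $R_i\varphi^{\ast}\simeq\varphi^{\ast}R_j$, is part of the relative adjunction of Proposition \ref{AdjunctionOnLaxLimits}. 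On cartesian sections each $\varphi^{\ast}\pi_j\simeq\pi_i$, exactly as in Addendum \ref{LocalizationCoUnit}. So the diagram is equivalent to the constant diagram at $\pi_i$ on the weakly contractible (filtered) category $\I_{i/}$, and its colimit is $\pi_i$, with the $\id_i$-component of the cocone an equivalence.

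It remains to check that the composite above is, under these identifications, exactly this $\id_i$-component composed with the unit equivalence $\pi_i\iso R_iL_i\pi_i$. Then it is an equivalence, and so $L$ is fully faithful.

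\textbf{Main obstacle.} The hard step is this last coherence: identifying the colimit-cocone component after transporting along the unit equivalences. I would handle it by regarding the unit $\id\rightarrow\lim^{\oplax}R\circ\lim^{\oplax}L$ of the oplax adjunction as an equivalence of endofunctors of the oplax limit, using that the relative unit is an equivalence fiberwise. The adjunction of Theorem \ref{AdjunctionDual} is the composite of the localization of Theorem \ref{MainThmDual} with the oplax adjunction. Hence $(\lim R)L$ is the restriction of $\lim^{\oplax}R\circ\lim^{\oplax}L$ followed by reflection and inclusion. Since the inclusion is fully faithful and $\lim^{\oplax}R\,\lim^{\oplax}L\simeq\id$, the unit of the composite adjunction is the unit of the localization evaluated on an object already in $\lim_{\I^{\op}}\C_{\bullet}$. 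That unit is an equivalence by Addendum \ref{LocalizationCoUnit}. This formal argument sidesteps most of the explicit bookkeeping.
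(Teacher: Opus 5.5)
Your fallback argument in the last paragraph does not work, because it puts the reflection in the wrong place. In Theorem \ref{AdjunctionDual}, the localization of Theorem \ref{MainThmDual} is applied to $\D_{\bullet}$, not to $\C_{\bullet}$. In general there is no reflection of $\lim^{\oplax}_{\I^{\op}}\C_{\bullet}$ onto $\lim_{\I^{\op}}\C_{\bullet}$, since nothing is assumed about colimits in the $\C_i$.

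Write $L^{\mathrm{loc}}$ for that reflection and $\iota$ for the inclusions. Then $\iota(\lim R)L\simeq\lim^{\oplax}R\circ\iota L^{\mathrm{loc}}\circ\lim^{\oplax}L\circ\iota$. So the unit at $x\in\lim_{\I^{\op}}\C_{\bullet}$ is the oplax unit followed by $\lim^{\oplax}R$ of the localization unit at $\lim^{\oplax}L(x)=(L_ix_i)_i$. That object is typically not a cartesian section: its structure maps are the mates $L_i\varphi^{\ast}x_j\to\varphi^{\ast}L_jx_j$, whose failure to be invertible is the whole point of the theorem. So Addendum \ref{LocalizationCoUnit} says nothing about it.

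Levelwise, you need $R_i$ to invert $L_ix_i\to\colim_{(\varphi\colon i\to j)\in\I_{i/}}\varphi^{\ast}L_jx_j$. Since $R_i\varphi^{\ast}L_jx_j\simeq\varphi^{\ast}R_jL_jx_j\simeq x_i$, this is exactly the assertion that $R_i$ commutes with this colimit. That is the step you defer to the addendum in your first paragraph. It is also what the paper's proof takes from the last identifications in Addendum \ref{AdjunctionCoUnit}. Nothing in the hypotheses provides it; $\C_i$ need not even have $\I$-shaped colimits.

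In fact the step fails, and with it the statement as formulated. Take the following data:
\begin{itemize}
\item $\I=\N$ and $\C_n=[1]$ with identity transitions;
\item $\D_n=\N\cup\{\infty\}$ with its usual order (a complete lattice), with transitions $d(k)=k+1$, $d(\infty)=\infty$, which preserve suprema of chains;
\item $R_n=R$ with $R(k)=0$, $R(\infty)=1$.
\end{itemize}
Then $R\circ d=R$, and $R$ has the fully faithful left adjoint $0\mapsto 0$, $1\mapsto\infty$. But a family $(y_n)$ with $d(y_{n+1})=y_n$ must be constant at $\infty$. Hence $\lim_{\I^{\op}}\D_{\bullet}\simeq\ast$, and $L\colon\lim_{\I^{\op}}\C_{\bullet}\simeq[1]\to\ast$ is not fully faithful. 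Concretely, $\pi_nL(0)=\sup_k d^k(0)=\infty$, and $R(\infty)=1\neq 0=\colim_k R(d^k(0))$.

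So an extra hypothesis is needed, e.g.\ that each $\C_i$ admits $\I$-shaped colimits and each $R_i$ preserves them. This holds in the $\Pr^L_{\ca}$ and $\Pr^{LL}_{\st}$ situations of Remark \ref{FilteredColimitPrL}. Under it, your first paragraph (which is the paper's argument) goes through.

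The coherence you single out as the main obstacle is then harmless. For the mate $\beta_\chi\colon L_j\chi^{\ast}\to\chi^{\ast}L_{j'}$, its defining property gives $R_j(\beta_\chi)\circ\eta_j\chi^{\ast}\simeq\chi^{\ast}\eta_{j'}$. So the units form a map of $\I_{i/}$-diagrams from $\varphi\mapsto\varphi^{\ast}x_j$ to $\varphi\mapsto\varphi^{\ast}R_jL_jx_j$. The $\id_i$-components of the colimit cocones are compatible with this map, which is precisely the commutative square in the paper's proof.
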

\begin{proof}[Proof.]
It suffices to check that the unit map $\eta\colon\id_{\lim_{\I^{\op}}\C_{\bullet}}\Rightarrow(\lim_{\I^{\op}}R)L$ is an equivalence, which can be checked levelwise. To see this, consider, for any $i\in\I$, the commutative square
\begin{equation*}
\begin{tikzcd}
\pi_i\arrow[r]\arrow[d,"\sim"{sloped}] & R_iL_i\pi_i\arrow[d]\\
\colim_{(\varphi\colon i\rightarrow j)\in\I_{i/}}\varphi^{\ast}\pi_j\arrow[r,"\sim"] & \colim_{(\varphi\colon i\rightarrow j)\in\I_{i/}}\varphi^{\ast}R_jL_j\pi_j.
\end{tikzcd}
\end{equation*}
The left vertical map is an equivalence since $\pi_i\simeq\varphi^{\ast}\pi_j$ by the definition of cartesian sections and since $\I_{i/}$ is weakly contractible. The bottom horizontal map is an equivalence since the levewise unit maps $\eta_i\colon\id_{\C_i}\rightarrow R_iL_i$ are equivalences by assumption. The composite in this square identifies with $\pi_i\eta$ by Addendum \ref{AdjunctionCoUnit}, so we conclude. 
\end{proof}

\begin{proposition}\label{FiniteLimits}
In the setting of Theorem \ref{AdjunctionDual}, assume additionally that
\begin{itemize}
\item $\K$ is a collection of finite categories and the diagrams $\C_{\bullet},\D_{\bullet}$ factor through $\widehat{\Cat}\sqb{\K}\hookrightarrow\widehat{\Cat}$,
\item the left adjoints $L_i\colon\C_i\rightarrow\D_i$ preserve $\K$-shaped limits for all $i\in\I$ and
\item $\I$-shaped colimits and $\K$-shaped limits commute in $\D_i$ for all $i\in\I$.\footnote{This is e.g. satisfied for all choices of $\I$ and $\K$ if the $\D_i,\,i\in\I$ are compactly assembled, stable or topoi.}
\end{itemize}
Then the left adjoint $L\colon\lim_{\I^{\op}}\C_{\bullet}\rightarrow\lim_{\I^{\op}}\D_{\bullet}$ preserves $\K$-shaped limits.
\end{proposition}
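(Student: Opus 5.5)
Since $\K$-shaped limits in $\lim_{\I^{\op}}\C_{\bullet}$ and in $\lim_{\I^{\op}}\D_{\bullet}$ are computed levelwise, the plan is to check the claim levelwise via the explicit formula of Theorem \ref{AdjunctionDual}. The hypothesis that $\C_{\bullet},\D_{\bullet}$ factor through $\widehat{\Cat}\sqb{\K}$ means the limits exist levelwise and the cartesian transports $\varphi^{\ast}$ preserve them. The subcategory inclusion $\widehat{\Cat}\sqb{\K}\hookrightarrow\widehat{\Cat}$ creates limits, so the projections $\pi_i$ preserve $\K$-shaped limits and are jointly conservative. It therefore suffices to show that for every $K\in\K$, every diagram $x\colon K\rightarrow\lim_{\I^{\op}}\C_{\bullet}$ and every $i\in\I$, the comparison map
\begin{equation*}
\pi_iL(\lim\nolimits_K x)\rightarrow\lim\nolimits_K\pi_iL(x)
\end{equation*}
in $\D_i$ is an equivalence.

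Using $\pi_iL\simeq\colim_{(\varphi\colon i\rightarrow j)\in\I_{i/}}\varphi^{\ast}L_j\pi_j$, I will factor this comparison map as the composite
\begin{equation*}
\colim_{\I_{i/}}\varphi^{\ast}L_j\pi_j\lim\nolimits_K x\rightarrow\colim_{\I_{i/}}\lim\nolimits_K\varphi^{\ast}L_j\pi_jx\rightarrow\lim\nolimits_K\colim_{\I_{i/}}\varphi^{\ast}L_j\pi_jx.
\end{equation*}
The first map is an equivalence for three reasons. First, $\pi_j$ preserves $\K$-limits. Second, $L_j$ preserves $\K$-limits by assumption. Third, $\varphi^{\ast}\colon\D_j\rightarrow\D_i$ preserves $\K$-limits because $\D_{\bullet}$ is $\widehat{\Cat}\sqb{\K}$-valued. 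For the second map, I will use that $\I_{i/}\rightarrow\I$ is cofinal, since $\I$ is filtered. Hence $\I_{i/}$-indexed colimits are $\I$-shaped colimits, and these commute with $\K$-shaped limits in $\D_i$ by the third hypothesis. Strictly, one reindexes the $\I_{i/}$-diagram along a cofinal functor from $\I$ or uses that $\I_{i/}$ is itself filtered. If the hypothesis is meant literally only for $\I$-indexed diagrams, the reduction goes via the left Kan extension along $\I_{i/}\rightarrow\I$, or via a cofinal map $\I\rightarrow\I_{i/}$ when one exists. In general I will simply note that $\I_{i/}$-colimits are computed as $\I$-colimits after left Kan extension along the cofinal projection.

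The step I expect to be the main obstacle is coherence: identifying the composite above with $\pi_i$ applied to the canonical comparison map for $L$, rather than with some other equivalence. I would handle this by naturality of the formula in Theorem \ref{AdjunctionDual} with respect to the object of $\lim_{\I^{\op}}\C_{\bullet}$. Both $\pi_iL$ and the colimit formula are functors of $x$, and the comparison maps for limits are produced functorially from the limit cone, so applying the natural equivalence to the cone $\lim_K x\rightarrow x$ identifies the two comparison maps. Finally, the argument needs the $\K$-shaped limits in $\lim_{\I^{\op}}\C_{\bullet}$ to be computed levelwise, which again follows from the factorization through $\widehat{\Cat}\sqb{\K}$. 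Joint conservativity of the $\pi_i$ then concludes the proof.
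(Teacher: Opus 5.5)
Your proposal is correct and follows the paper's proof: you reduce to the components $\pi_iL$ because $\K$-limits in both limits are computed levelwise, insert $\pi_iL\simeq\colim_{(\varphi\colon i\rightarrow j)\in\I_{i/}}\varphi^{\ast}L_j\pi_j$, use that $\pi_j$, $L_j$ and $\varphi^{\ast}$ preserve $\K$-limits, and commute the $\I_{i/}$-colimit past the $\K$-limit, a step the paper justifies in one line by the cofinality of $\I_{i/}\rightarrow\I$. Of your hedges for that last step, drop the left Kan extension one: along the left fibration $\I_{i/}\rightarrow\I$ it is computed by colimits over the anima $\Map_{\I}(i,k)$, which need neither exist in $\D_i$ nor commute with $\K$-limits, so the cleanest justification is to read the commutation hypothesis as covering the filtered shapes $\I_{i/}$, as it does in all the examples of the footnote.
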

\begin{proof}[Proof.]
The assumption that the diagrams factor through $\widehat{\Cat}\sqb{\K}\hookrightarrow\widehat{\Cat}$ implies that $\K$-shaped limits in $\lim_{\I}\C_{\bullet}$ resp. $\lim_{\I}\D_{\bullet}$ are computed levelwise. It thus suffices to show that the composite $\pi_iL\colon\lim_{\I}\C_{\bullet}\rightarrow\D_i$ preserves $\K$-shaped limits for all $i\in\I$, and Theorem \ref{AdjunctionDual} identifies $\pi_iL\simeq\colim_{(\varphi\colon i\rightarrow j)\in\I_{i/}}\varphi_{\#}L_j\pi_j$. The functors $\varphi_{\#}$, $L_j$ and $\pi_j$ all preserve $\K$-shaped limits by assumption and $\I_{i/}\rightarrow\I$ is cofinal, so that $\I_{i/}$-shaped colimits commute with $\K$-shaped limits in $\D_i$. Together, these facts imply that $\pi_iL$ preserves $\K$-shaped limits.
\end{proof}
\begin{example}[\protect{\cite[Corollary 3.4.16]{Geiss}}]
The geometric realization functor $\abs{-}\colon\Cat\Sp\rightarrow\Sp$ from the category of categorical spectra to the category of spectra preserves finite products.
\end{example}

Finally, we turn to prove the push-pull formula mentioned in the Introduction (Proposition \ref{IntroPushPull}). More generally than explained there, we give, under the usual assumptions, a formula for left adjoints to projections of the form $\pi_i\colon\lim_{\I^{\op}}\C_{\bullet}\rightarrow\C$, where the limit is taken along right adjoints. Versions of this formula have appeared in \cite[Lemma 1.3.6]{Gaitsgory}, \cite[Proposition 1.66,2)]{Efimov} and, in the case $\I=\N$, \cite[Corollary B.0.6]{Geiss}.

\begin{proposition}[Push-Pull Formula]\label{PushPullFormula}
Let $\I$ be a small, filtered category and $\C_{\bullet}\colon\I^{\op}\rightarrow\widehat{\Cat}^R(\I)\coloneqq\widehat{\Cat}^R\cap\widehat{\Cat}\br{\I}$ a diagram. For any $i\in\I$, the projection $\pi_i\colon\lim_{\I^{\op}}\C_{\bullet}\rightarrow\C_i$ admits a left adjoint $(\pi_i)_L\colon\C_i\rightarrow\lim_{\I^{\op}}\C_{\bullet}$ and, for any $j\in\I$, the canonical comparison map
\begin{equation*}
\begin{tikzcd}
\colim\limits_{(\varphi\colon i\rightarrow k,\psi\colon j\rightarrow k)\in\I_{\{i,j\}/}}\psi^{\ast}\varphi_{\#}\arrow[r,"\sim"] & \pi_j(\pi_i)_L
\end{tikzcd}
\end{equation*}
induced by the adjunction units is a natural equivalence of functors $\C_i\rightarrow\C_j$.
\end{proposition}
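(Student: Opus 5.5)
The idea is to realize $(\pi_i)_L$ as the composite of two left adjoints: one from $\C_i$ into the oplax limit, followed by the reflection $L$ of Theorem \ref{MainThmDual}. Since the transition maps $\varphi^{\ast}$ are right adjoints, with left adjoints $\varphi_{\#}$, the oplax limit $\lim^{\oplax}_{\I^{\op}}\C_{\bullet}$ is the category of all sections of the cartesian fibration $\Un(\C_{\bullet})\to\I$. This fibration is also cocartesian, with cocartesian transport $\varphi_{\#}$.

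\textbf{Step 1: the left adjoint into the oplax limit.} I will show that evaluation $\pi_i\colon\lim^{\oplax}_{\I^{\op}}\C_{\bullet}\to\C_i$ has a left adjoint $\lambda_i$, given by
\[
\pi_j\lambda_i(x)\simeq\colim_{\varphi\colon i\to j}\varphi_{\#}x,
\]
a colimit over the anima $\Map_{\I}(i,j)$. Concretely, $\lambda_i$ is relative left Kan extension along $\{i\}\hookrightarrow\I$ into the cocartesian fibration. It exists because each $\C_j$ admits $\I$-shaped colimits, and mapping-anima-shaped colimits can be handled in the same way in the filtered setting.

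A cleaner route, which I would try first, replaces $\{i\}$ by $\I_{i/}$ and defines
\[
\pi_j\lambda_i(x)\simeq\colim_{(\varphi\colon i\to k,\ \psi\colon j\to k)\in\I_{\{i,j\}/}}\psi^{\ast}\varphi_{\#}x
\]
directly, at the level of the reflection. The expected main obstacle is this part: the existence of $\lambda_i$, and in particular making its identification rigorous at the level of coherences.

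\textbf{Step 2: composing with the reflection.} Composing the adjunctions $\lambda_i\dashv\pi_i$ and $L\dashv\mathrm{incl}$, and using that $\pi_i$ on the limit is the restriction of $\pi_i$ on the oplax limit, gives $(\pi_i)_L\simeq L\lambda_i$. Theorem \ref{MainThmDual} then yields
\[
\pi_j(\pi_i)_L\simeq\colim_{(\psi\colon j\to k)\in\I_{j/}}\psi^{\ast}\pi_k\lambda_i .
\]
Substituting Step 1 gives a double colimit. Using that $\psi^{\ast}$ commutes with $\I$-shaped colimits, and more generally with the relevant filtered colimits (note $\I_{j/}$ is filtered and cofinal in $\I$), this double colimit reassembles into a single colimit over the category of cospans $\I_{\{i,j\}/}$.

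\textbf{Step 3: the comparison map.} Finally I will check that the resulting equivalence agrees with the canonical comparison map built from units, namely
\[
\psi^{\ast}\varphi_{\#}\to\psi^{\ast}\varphi_{\#}\pi_i(\pi_i)_L\simeq\psi^{\ast}\varphi_{\#}\varphi^{\ast}\pi_k(\pi_i)_L\to\psi^{\ast}\pi_k(\pi_i)_L\simeq\pi_j(\pi_i)_L .
\]
This should follow by unwinding the unit description in Addendum \ref{LocalizationCoUnit} together with the unit of $\lambda_i\dashv\pi_i$.
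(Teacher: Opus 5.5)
\textbf{There is a genuine gap in Step 1, and it carries over into Step 2.}

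Over all of $\I$, the evaluation functor $\pi_i\colon\lim^{\oplax}_{\I^{\op}}\C_{\bullet}\to\C_i$ need not have a left adjoint under the stated hypotheses. Its pointwise formula $\colim_{\varphi\in\Map_{\I}(i,j)}\varphi_{\#}x$ is an anima-indexed colimit, and such colimits are not filtered. When $\Map_{\I}(i,j)=\emptyset$ (e.g.\ $j<i$ in $\N$) it asks for an initial object of $\C_j$. A mapping set with several elements asks for coproducts. Only $\I$-shaped colimits are assumed.

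Here is a concrete counterexample. Let $\I=\N$ and take the constant diagram at the poset $P=\mathbb{Z}\cup\{+\infty\}$ with identity transition maps. $P$ has all sequential colimits, so the hypotheses hold, and the oplax limit is $\Fun(\N,P)$. A left adjoint $\lambda$ to $\ev_1$ would need $\lambda(0)(0)\le G(0)$ for every increasing $G$ with $G(1)\ge 0$. But $G(0)$ can be any integer $n\le 0$, so no such $\lambda$ exists.

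Your ``cleaner route'' does not repair this. Prescribing $\pi_j\lambda_i$ by the target formula is circular. In this example it produces the constant sequence at $x$, which is not left adjoint to $\ev_1$.

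Step 2 has the matching defect. Reassembling the double colimit requires $\psi^{\ast}$ to commute with the inner $\Map_{\I}(i,k)$-indexed colimits (initial objects, coproducts). A right adjoint that preserves only $\I$-shaped colimits need not do this.

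\textbf{The fix.} Use filteredness \emph{before} constructing any left adjoint, so that the source object becomes initial. Choose a cospan $\varphi\colon i\to k\leftarrow j\colon\psi$. Since $\I_{k/}\to\I$ and $\I_{k/}\to\I_{\{i,j\}/}$ are cofinal, pre-composing with $\varphi_{\#}$ and post-composing with $\psi^{\ast}$ reduces the statement to the case where $\I$ has an initial object $\emptyset$ and $i=j=\emptyset$. Write $\varphi_l\colon\emptyset\to l$ for the unique maps.

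In that case your Step 1 needs no colimits at all. The left adjoint of evaluation at $\emptyset$ on the oplax limit sends $x$ to the cocartesian section $l\mapsto(\varphi_l)_{\#}x$, because the relevant relative slices are contractible. Step 2 then gives $\pi_{\emptyset}(\pi_{\emptyset})_L\simeq\colim_{l\in\I}\varphi_l^{\ast}(\varphi_l)_{\#}$ directly, with no reassembly needed.

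This is the paper's proof. There it is phrased as Theorem \ref{AdjunctionDual} applied to the tautological transformation $\C_{\bullet}\Rightarrow\const_{\C_{\emptyset}}$; the oplax-limit left adjoint of that transformation, restricted to constant objects, is exactly your $\lambda$.

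For Step 3, the paper does not unwind units by hand. It extends the diagram by its limit to $(\I^{\triangleright})^{\op}$ and applies the naturality in the diagram shape (Addendum \ref{NaturalityOfAdjunctionInShape}) to $\I\to\I^{\triangleright}$. This shows that $l\mapsto\varphi_l^{\ast}(\varphi_l)_{\#}$ is a colimit diagram on $\I^{\triangleright}$, which identifies the equivalence with the canonical comparison map.
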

\begin{proof}[Proof.]
By filteredness of $\I$, there exists a cospan $(\varphi\colon i\rightarrow k,\psi\colon j\rightarrow k)$ and the resulting map $\I_{k/}\rightarrow\I_{\{i,j\}/}$ is cofinal, so post-composing with $\psi^{\ast}$, pre-composing with $\varphi_{\#}$ and using furthermore that the projection $\I_{k/}\rightarrow\I$ is cofinal, we may assume without loss of generality that $\I$ has an initial object $\emptyset$ and $i\simeq\emptyset\simeq j$.

In this case, there is a tautological natural transformation $\C_{\bullet}\Rightarrow\const_{\C_{\emptyset}}$ of diagrams $\I^{\op}\rightarrow\widehat{\Cat}$, adjoint to the equivalence $\colim_{\I^{\op}}\C_{\bullet}\simeq\C_{\emptyset}$. The limit of this natural transformation is the projection $\pi\colon\lim_{\I}\C_{\bullet}\rightarrow\C_{\emptyset}$ and applying Theorem \ref{AdjunctionDual} yields an equivalence $\colim_{\I}(\varphi_i)^{\ast}(\varphi_i)_{\#}\simeq\pi_{\emptyset}(\pi_{\emptyset})_L$, where $\varphi_i\colon\emptyset\rightarrow i$ denotes the unique map.

It remains to identify this equivalence as induced by the canonical cocone, which we start by extending the diagram via the limit to a diagram $\overline{\C}_{\bullet}\colon(\I^{\triangleright})^{\op}\rightarrow\widehat{\Cat}^R(\I)$ and the natural transformation to $\overline{\C}_{\bullet}\Rightarrow\const_{\C_{\emptyset}}$. Then we deduce from the naturality of Addendum \ref{NaturalityOfAdjunctionInShape} for the map $\I\rightarrow\I^{\triangleright}$ that the comparison map $\colim_{\I}(\varphi_i)^{\ast}(\varphi_i)_{\#}\iso\colim_{\I^{\triangleright}}(\varphi_i)^{\ast}(\varphi_i)_{\#}$ is an equivalence, equivalently that the diagram $\I^{\triangleright}\rightarrow\Fun(\C_{\emptyset},\C_{\emptyset}),\,i\mapsto(\varphi_i)^{\ast}(\varphi_i)_{\#}$ is a colimit diagram.
\end{proof}
\begin{remark}
For completeness' sake, we explain how to unwind this diagram: it is obtained as a composite
\begin{equation*}
\begin{tikzcd}
\I^{\triangleright}\arrow[r] & \Fun\left(\lim_{(\I^{\triangleright})^{\op}}^{\oplax}\overline{\C}_{\bullet},\C_{\emptyset}\right)\arrow[r] & \Fun(\C_{\emptyset},\C_{\emptyset}),
\end{tikzcd}
\end{equation*}
where the second map is pullback along the left adjoint $\C_{\emptyset}\stackrel{\const}{\rightarrow}\Fun(\I^{\triangleright},\C_{\emptyset})\simeq\lim_{(\I^{\triangleright})^{\op}}^{\oplax}\C_{\emptyset}\rightarrow\lim_{(\I^{\triangleright})^{\op}}^{\oplax}\overline{\C}_{\bullet}$ and the first map is induced by cartesian transport, i.e. it is obtained by currying the composite
\begin{equation*}
\begin{tikzcd}
\I^{\triangleright}\times\lim_{(\I^{\triangleright})^{\op}}^{\oplax}\overline{\C}_{\bullet}\arrow[r,"\ev"] & \Un^{\mathrm{cart}}(\overline{\C}_{\bullet})\arrow[r] & \C_{\emptyset},
\end{tikzcd}
\end{equation*}
the second map being given by the the cartesian transport along the morphisms $\varphi_i\colon\emptyset\rightarrow i,\,i\in\I^{\triangleright}$. The curried composite $\I^{\triangleright}\times\C_{\emptyset}\rightarrow\I^{\triangleright}\times\lim_{(\I^{\triangleright})^{\op}}^{\oplax}\overline{\C}_{\bullet}\rightarrow\Un^{\mathrm{cart}}(\overline{\C}_{\bullet})$, on the other hand, is given by the \textit{co}cartesian transport along the morphisms $\varphi_i\colon\emptyset\rightarrow i,\,i\in\I^{\triangleright}$ by construction.
\end{remark}

\begin{remark}[Pull-Push Formula]
In this situation, we can also consider the limit cone $\const_{\lim_{\I^{\op}}\C_{\bullet}}\Rightarrow\C_{\bullet}$, whose limit is the identity on $\lim_{\I^{\op}}\C_{\bullet}$. Applying Theorem \ref{AdjunctionDual} yields an equivalence $\colim_{\I}(\pi_i)_L\pi_i\simeq\id_{\lim_{\I^{\op}}}$, which can similarly be unwound to be induced by the counit transformations. Unlike the push-pull formula, this formula is \textit{not} specific to cofiltered diagram shapes. Indeed, it can also be obtained by an application of \cite[Corollary 1.3]{HorevYanovski} in general.
\end{remark}

\begin{corollary}
Let $\I$ be a small, filtered category. The (non-full) subcategory inclusion $\widehat{\Cat}^R\hookrightarrow\widehat{\Cat}$ creates $\I^{\op}$-shaped limits along right adjoints preserving $\I$-shaped colimits. In particular, $\widehat{\Cat}^R(\I)\hookrightarrow\widehat{\Cat}$ creates all $\I^{\op}$-shaped limits.
\end{corollary}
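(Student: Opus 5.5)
The plan is to check the two halves of \singlequote{creation} separately. Fix a diagram $\C_{\bullet}\colon\I^{\op}\rightarrow\widehat{\Cat}^R$ whose transition maps $\varphi^{\ast}$ are right adjoints preserving $\I$-shaped colimits, with each $\C_i$ admitting $\I$-shaped colimits. Let $\lim_{\I^{\op}}\C_{\bullet}$ be its limit in $\widehat{\Cat}$. The two things to show are:
\begin{enumerate}[label=(\alph*)]
\item the limit cone lies in $\widehat{\Cat}^R$;
\item this cone is a limit cone in $\widehat{\Cat}^R$.
\end{enumerate}
Since $\widehat{\Cat}^R\hookrightarrow\widehat{\Cat}$ is faithful and the mapping anima of $\widehat{\Cat}^R$ are unions of components of those in $\widehat{\Cat}$, part (b) reduces to one claim. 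For any $\X$ and any cone $f_i\colon\X\rightarrow\C_i$ of right adjoints, the induced functor $f\colon\X\rightarrow\lim_{\I^{\op}}\C_{\bullet}$ must again be a right adjoint. The converse direction is automatic, because right adjoints compose.

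Part (a) is exactly the first assertion of Proposition \ref{PushPullFormula}: every projection $\pi_i$ admits a left adjoint $(\pi_i)_L$. The same proposition also shows that the structure maps are compatible. So the limit cone lands in $\widehat{\Cat}^R$.

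For part (b), write $g_i\dashv f_i=\pi_if$. First, $\lim_{\I^{\op}}\C_{\bullet}$ admits $\I$-shaped colimits, computed levelwise; this is Corollary \ref{ApplicationCocomplete}, since the transition maps preserve them. Second, the pull-push formula from the Remark after Proposition \ref{PushPullFormula} gives $\colim_{\I}(\pi_i)_L\pi_i\simeq\id$. Combining these, for $z\in\lim_{\I^{\op}}\C_{\bullet}$ and $x\in\X$ we compute, naturally in both variables,
\begin{equation*}
\Map(z,fx)\simeq\lim_{i\in\I^{\op}}\Map((\pi_i)_L\pi_iz,fx)\simeq\lim_{i\in\I^{\op}}\Map(\pi_iz,f_ix)\simeq\lim_{i\in\I^{\op}}\Map_{\X}(g_i\pi_iz,x).
\end{equation*}
So a left adjoint exists as soon as the $\I$-indexed diagram $i\mapsto g_i\pi_iz$ in $\X$ has a colimit, and then $f_L(z)\simeq\colim_i g_i\pi_iz$. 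This diagram is built using the mates of the cone's naturality squares. When $\X$ admits $\I$-shaped colimits, this is just Theorem \ref{AdjunctionDual} applied to the natural transformation $\const_{\X}\Rightarrow\C_{\bullet}$ of right adjoints: its hypothesis on the source diagram is precisely that $\X\in\widehat{\Cat}(\I)$, and the resulting formula agrees with the one above.

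The main obstacle is therefore the case of a general source $\X$ that does not admit $\I$-shaped colimits. My first attempt would be to show that the corepresented presheaf $\lim_{i\in\I^{\op}}\Map_{\X}(g_i\pi_iz,-)$ is corepresentable without assuming colimits in $\X$. The idea is to use that the $g_i\pi_iz$ are obtained by applying $g_k$ to the push-pull colimits in $\C_k$. Concretely, by Proposition \ref{PushPullFormula} and cofinality of $\I_{k/}\rightarrow\I$, one can rewrite $g_j\pi_jz\simeq g_k\varphi_{\#}\pi_j z$ for $\varphi\colon j\rightarrow k$. One then hopes to identify the colimit with a single $g_k$ applied to a colimit in $\C_k$ where it exists; here $g_k$ preserves colimits as a left adjoint.

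If this fails, I would read the statement as quantifying over cones whose source lies in $\widehat{\Cat}^R(\I)$; the \singlequote{in particular} clause suggests this reading. In that case the argument above via Theorem \ref{AdjunctionDual} already completes the proof. The final claim, that $\widehat{\Cat}^R(\I)\hookrightarrow\widehat{\Cat}$ creates all $\I^{\op}$-shaped limits, then follows by combining this with the fact that $\widehat{\Cat}(\I)\hookrightarrow\widehat{\Cat}$ creates limits.
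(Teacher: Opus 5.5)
Your reduction is correct and matches the paper's argument. The paper also gets the first half from Proposition \ref{PushPullFormula}, and the second half by applying Theorem \ref{AdjunctionDual} to $\const_{\T}\Rightarrow\C_{\bullet}$. However, the obstacle you isolate is genuine, and the paper's proof passes over it. In Theorem \ref{AdjunctionDual} it is the \emph{source} of the right adjoint transformation, here $\const_{\T}$, that must be $\widehat{\Cat}(\I)$-valued. So that argument only covers sources $\T$ admitting $\I$-shaped colimits.

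Your attempt to remove this hypothesis cannot succeed. The first sentence of the statement, read as the paper's proof reads it (universal property tested against arbitrary objects of $\widehat{\Cat}^R$), is false. Take $\I=\N$ and $\C_n=[n]$, with transition maps $\min(-,n)\colon[n+1]\rightarrow[n]$.
\begin{itemize}
\item The transition maps are right adjoint to the inclusions.
\item Every $[n]$ admits sequential colimits, since chains are eventually constant, and the truncations preserve them.
\item The limit in $\widehat{\Cat}$ is $\omega+1$.
\item The maps $f_n=\min(-,n)\colon\omega\rightarrow[n]$ form a cone of right adjoints, with left adjoints $g_n$ the inclusions $[n]\hookrightarrow\omega$.
\end{itemize}
But the induced functor $\omega\hookrightarrow\omega+1$ has no left adjoint: a value $L(\infty)$ would need $L(\infty)\le t$ for no $t\in\omega$, which fails at $t=L(\infty)$. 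This is exactly what your own criterion predicts: $g_n\pi_n(\infty)=n$, and the chain $0\le1\le2\le\dotsb$ has no colimit in $\omega$.

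Your rewriting $g_j\pi_jz\simeq g_k\varphi_{\#}\pi_jz$ also has a separate problem. It only reaches indices $j$ lying \emph{over} $k$, and these do not form a cofinal part of $\I$, so it could not compute the colimit anyway.

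So drop the paragraph on general $\X$ and keep your fallback. The universal property holds against sources admitting $\I$-shaped colimits, and that is what the paper's argument actually establishes. For the second sentence of the statement this suffices, and your proof of it is complete:
\begin{itemize}
\item the limit and its projections lie in $\widehat{\Cat}(\I)$ because $\widehat{\Cat}(\I)\hookrightarrow\widehat{\Cat}$ creates limits;
\item the projections are right adjoints by Proposition \ref{PushPullFormula};
\item for $\T\in\widehat{\Cat}^R(\I)$, Theorem \ref{AdjunctionDual} legitimately applies to $\const_{\T}\Rightarrow\C_{\bullet}$.
\end{itemize}
A minor simplification: the identification $\Map(z,fx)\simeq\lim_{i}\Map(\pi_iz,f_ix)$ holds in any limit of categories. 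It needs neither the pull-push formula nor Corollary \ref{ApplicationCocomplete}.
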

\begin{proof}[Proof.]
For any diagram $\C_{\bullet}\colon\I^{\op}\rightarrow\widehat{\Cat}^R(\I)$, the push-pull Formula implies that the limit cone as computed in $\widehat{\Cat}$ factors through $\widehat{\Cat}^R(\I)\hookrightarrow\widehat{\Cat}$. To verify that this satisfies the universal property of a limit in $\widehat{\Cat}^R$, we need to prove that a functor $F\colon\T\rightarrow\lim_{\I^{\op}}\C_{\bullet}$ is a right adjoint if and only if the components $\pi_iF\colon\T\rightarrow\C_i$ are right adjoints for all $i\in\I$, but this is immediate by applying Theorem \ref{AdjunctionDual} to the corresponding natural transformation $\const_{\T}\Rightarrow\C_{\bullet}$.
\end{proof}

\begin{remark}\label{FilteredColimitPrL}
%The above essentially represents part of an attempt to do as much of the theory of presentable categories as possible without relying on the abstract existence results and instead providing explicit formulae. This, of course, can only go so far.
If $\C_{\bullet}\colon\I\rightarrow\Pr^L$ is a filtered diagram such that the diagram $\C_{\bullet}^R\colon\I^{\op}\rightarrow\Pr^R$ obtained by passing to right adjoints factors through $\widehat{\Cat}\br{\I}\hookrightarrow\widehat{\Cat}$ (e.g. if the original diagram is in $\Pr^L_{\ca}$ or $\Pr^{LL}_{\st}$), the push-pull formula is applicable and describes the left adjoint to the projection $\pi_i\colon\lim_{\I^{\op}}\C_{\bullet}^R\rightarrow\C_i^R$, which identifies by \cite[Corollary 5.5.3.4]{HTT} with the $i$-th component $\C_i\rightarrow\colim_{\I}^{\Pr^L}\C_{\bullet}$ of the colimit cocone of the diagram $\C_{\bullet}$ in $\Pr^L$.

In particular, in combination with Proposition \ref{FullyFaithful}, this implies that fully faithful functors in $\Pr^L_{\ca}$ or $\Pr^{LL}_{\st}$ are stable under filtered colimits (cf. \cite[Proposition 1.67]{Efimov}), taking for granted that colimits are computed underlying in $\Pr^L$.
\end{remark}

The final application we give is to \textit{shape theory}. To this end, recall that, for any topos $\X$, there exists a unique geometric morphism $p_{\ast}\colon\X\rightarrow\An$ and the corresponding monad $p_{\ast}p^{\ast}\colon\An\rightarrow\An$ is left-exact and accessible, i.e. a pro-anima. This is the \textit{shape} of $\X$, denoted $\Pi_{\infty}(\X)$, and defines a functor $\Pi_{\infty}\colon\RTop\rightarrow\Pro(\An)$.

Furthermore, a geometric morphism $f_{\ast}\colon\X\rightarrow\Y$ is called \textit{perfect} if it preserves filtered colimits \cite[Definition 24.5]{LehnerLectures}\footnote{This is related to the topological notion of perfectness by \cite[Theorem 3.50]{Lehner}. It is also implied by the stronger notion of a \textit{proper} geometric morphism by \cite[Remark 7.3.1.5]{HTT}.} and that a topos is called \textit{compact} if the terminal geometric morphism $p_{\ast}\colon\X\rightarrow\An$ is perfect (equivalently, the terminal object $1_{\X}\in\X$ is compact). Then we have the following result about the shape of cofiltered limits, adapted from \cite[Proposition 2.11, (1)]{Hoyois}. For further applications to shape theory, see \cite{Hoyois}, \cite[Lecture 24]{LehnerLectures}.

\begin{proposition}\label{ShapeCofiltered}
The shape functor $\Pi_{\infty}\colon\RTop\rightarrow\Pro(\An)$ preserves small, cofiltered limits of diagrams of compact topoi and perfect geometric morphisms.
\end{proposition}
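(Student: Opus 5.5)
We have a cofiltered diagram $\X_\bullet\colon\I\to\RTop$ of compact topoi and perfect geometric morphisms. The plan is to compute the shape of its limit $\X\coloneqq\lim_{\I}\X_\bullet$ via the push-pull formula and compare it with $\lim_{\I}\Pi_\infty(\X_\bullet)$ in $\Pro(\An)$. Under $\RTop\simeq(\mathrm{LTop})^{\op}$, the limit in $\RTop$ is the colimit of the diagram $\X_\bullet^{\ast}\colon\I^{\op}\to\Pr^L$ of inverse image functors. By \cite[Corollary 5.5.3.4]{HTT} this colimit is $\lim_{\I}$ of the pushforwards $f_\ast$, computed in $\widehat{\Cat}$. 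Writing $\mathcal{J}\coloneqq\I^{\op}$, which is filtered, the diagram of right adjoints $f_\ast$ is indexed by $\mathcal{J}^{\op}=\I$. By perfectness each $f_\ast$ preserves filtered colimits, in particular $\mathcal{J}$-shaped ones, and topoi admit all small colimits. Hence the diagram lands in $\widehat{\Cat}^R(\mathcal{J})$, and Proposition \ref{PushPullFormula} together with Remark \ref{FilteredColimitPrL} applies.

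Let $p_{i,\ast}\colon\X_i\to\An$ denote the terminal geometric morphisms and $p_\ast\colon\X\to\An$ that of the limit. Since the terminal geometric morphisms are compatible, we get $p^{\ast}\simeq\iota_i p_i^{\ast}$ for any $i$, where $\iota_i=(\pi_i)_L$. We also get $p_\ast\simeq p_{i,\ast}\pi_i$, because $p_\ast$ is right adjoint to $p^\ast$ and factors through any stage. Therefore
\begin{equation*}
p_\ast p^\ast\simeq p_{i,\ast}\pi_i(\pi_i)_L p_i^{\ast}\simeq p_{i,\ast}\colim_{(\varphi\colon i\to k)}\varphi^{\ast}\varphi_{\#}\,p_i^{\ast}\simeq\colim_{(\varphi\colon i\to k)\in\mathcal{J}_{i/}}p_{i,\ast}\varphi_\ast\varphi^{\ast}p_i^{\ast}.
\end{equation*}
The second equivalence is the push-pull formula with $j=i$, after reducing $\mathcal{J}_{\{i,i\}/}$ to $\mathcal{J}_{i/}$ by cofinality; the notation matches the paper, with $\varphi_\#$ the inverse image and $\varphi^\ast$ the pushforward. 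The third uses that $p_{i,\ast}$ preserves filtered colimits by compactness of $\X_i$. The last expression is $\colim_{k}p_{k,\ast}p_k^{\ast}$ over $\mathcal{J}_{i/}$, which by cofinality of $\mathcal{J}_{i/}\to\mathcal{J}$ is $\colim_{k\in\mathcal{J}}\Pi_\infty(\X_k)$.

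It remains to compare this colimit of functors with the limit in $\Pro(\An)$. $\Pro(\An)$ embeds as the opposite of the accessible left-exact functors $\An\to\An$, and the shape is the corepresented functor $p_\ast p^\ast$. A cofiltered limit of pro-anima corresponds to the filtered colimit of the corresponding left-exact accessible functors, computed pointwise, since filtered colimits of left-exact accessible functors remain left-exact and accessible. Thus $\lim_{\I}\Pi_\infty(\X_\bullet)$ corresponds to $\colim_{\mathcal{J}}p_{k,\ast}p_k^{\ast}$, which matches the formula above.

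The main obstacle is to check that this identification is induced by the canonical comparison map $\Pi_\infty(\X)\to\lim\Pi_\infty(\X_k)$, not merely that the two objects are abstractly equivalent. The plan here is to use the naturality statement: the comparison map in Proposition \ref{PushPullFormula} is induced by adjunction units $\id\to\varphi^\ast\varphi_\#$. The maps $\Pi_\infty(\X)\to\Pi_\infty(\X_k)$, viewed as transformations $p_{k,\ast}p_k^\ast\to p_\ast p^\ast$, are likewise built from the units of $\iota_k\dashv\pi_k$. So it suffices to verify that the cocone $p_{k,\ast}p_k^\ast\to p_\ast p^\ast$ agrees with the one obtained from the push-pull colimit cocone. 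This would follow by whiskering the colimit cocone of Proposition \ref{PushPullFormula} with $p_{i,\ast}$ and $p_i^\ast$, together with the unit-compatibility of mates.
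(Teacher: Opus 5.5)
Your overall strategy works, but the first step is not justified as written. You argue that, under $\RTop\simeq\mathrm{LTop}^{\op}$, the limit of $\X_\bullet$ in $\RTop$ is the colimit of the inverse images in $\Pr^L$. The duality only gives you the colimit in $\mathrm{LTop}$, and $\mathrm{LTop}\subseteq\Pr^L$ does not preserve colimits in general. For example, since $\An$ is terminal in $\RTop$, the coproduct of $\An$ with itself in $\mathrm{LTop}$ is $\An$, whereas in $\Pr^L$ it is $\An\times\An$.

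For cofiltered diagrams your claim does hold, but that is the nontrivial theorem that $\RTop\subseteq\widehat{\Cat}$ preserves cofiltered limits, \cite[Theorem 6.3.3.1]{HTT}. The paper invokes it at exactly this point. You need it for two things. First, it tells you that the category to which Proposition \ref{PushPullFormula} applies, the limit formed in $\widehat{\Cat}$, is the limit in $\RTop$. Second, it tells you that this limit is a topos whose projections $\pi_i$ are geometric morphisms, which is what underlies $p_\ast\simeq p_{i,\ast}\pi_i$ and $p^\ast\simeq(\pi_i)_Lp_i^\ast$. Citing that theorem closes the gap. Alternatively, as the paper's footnote suggests, you could check directly that for perfect transition maps the limit in $\widehat{\Cat}$ is a topos with the correct universal property.

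With that input added, the rest of your argument is correct. It differs from the paper's only in where compactness is used.

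The paper adjoins an initial object $-\infty\mapsto\An$ to the filtered index category. Compactness makes the new transition maps $p_{k,\ast}$ preserve filtered colimits, and the limit is unchanged. It then applies the push-pull formula at $i=j=-\infty$. There the push-pull diagram is literally $k\mapsto p_{k,\ast}p_k^\ast$ with unit-induced transition maps, so agreement with the canonical shape comparison map is immediate.

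You instead apply the formula at an arbitrary stage $i$; the diagonal $\mathcal{J}_{i/}\to\mathcal{J}_{\{i,i\}/}$ is indeed cofinal. You then whisker with $p_{i,\ast}$ and $p_i^\ast$ and use compactness of $\X_i$ to move $p_{i,\ast}$ inside the filtered colimit. This is the reduction step from the paper's proof of Proposition \ref{PushPullFormula} run in the opposite direction. It spares you from modifying the diagram. In exchange, you must identify $\varphi\mapsto p_{i,\ast}\varphi^\ast\varphi_\#p_i^\ast$ coherently with $k\mapsto p_{k,\ast}p_k^\ast$ over $\mathcal{J}_{i/}$. That is exactly the coherence the paper's extended diagram provides for free. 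Your whiskering sketch handles it at the same level of detail as the paper.
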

\begin{proof}[Proof.]
Let $\I$ be a small, filtered category and $\X_{\bullet}\colon\I^{\op}\rightarrow\RTop$ a diagram of compact topoi and perfect geometric morphisms. First of all, since $\An$ is terminal in $\RTop$ and the topoi are compact, we may append the initial object $-\infty$ to $\I$ and extend the diagram via $-\infty\mapsto\An$ to obtain a diagram $(\I^{\triangleleft})^{\op}\rightarrow\widehat{\Cat}^R\cap\widehat{\Acc}_{\aleph_0}\hookrightarrow\Cat$. This computes the desired limit in $\RTop$ since $\I$ is weakly contractible and by \cite[Theorem 6.3.3.1]{HTT}.\footnote{The special case that $\RTop\hookrightarrow\widehat{\Cat}$ creates cofiltered limits along perfect geometric morphisms can also be deduced directly from our methods.}

Then the push-pull formula implies that the canonical map $\colim_{i\in\I}(p_i)_{\ast}(p_i)^{\ast}\iso p_{\ast}p^{\ast}$ is an equivalence. The subcategory $\Fun^{\mathrm{lex, acc}}(\An,\An)\subseteq\Fun(\An,\An)$ on the left-exact, accessible functors is stable under filtered colimits, so this colimit is computed in that subcategory and passing to opposites yields that the canonical map
\begin{equation*}
\begin{tikzcd}
\Pi_{\infty}(\X)\arrow[r,"\sim"] & \lim_{i\in\I^{\op}}\Pi_{\infty}(\X_i)
\end{tikzcd}
\end{equation*}
is an equivalence.
\end{proof}

\printbibliography

@book{HTT,
    author = {Lurie, Jacob},
    year = {2009},
    title = {Higher Topos Theory},
    series = {Annals of Mathematics Studies},
    number = {170},
    publisher = {Princeton University Press}
}

@misc{kerodon,
    title = {Kerodon},
    author = {Lurie, Jacob},
    howpublished = {\url{https://kerodon.net}},
    year = {2018}
}

@misc{HA,
    title = {Higher Algebra},
    author = {Lurie, Jacob},
    howpublished = {\url{https://www.math.ias.edu/~lurie/papers/HA.pdf}},
    year = {2017}
}

@mastersthesis{Geiss,
    title = {Bordism-type Theories and $(\infty,\infty)$-categories},
    author = {Geiß, Thorger},
    year = {2025},
    school = {Goethe-Universität Frankfurt}
}

@article{GHN,
    title = {Lax Colimits and Free Fibrations in $\infty$-Categories},	
    author = {Gepner, David and Haugseng, Rune and Nikolaus, Thomas},
    journal={Documenta Mathematica},
    volume={22},
    pages={1225--1266},
    year={2017}
}

@article{HHNL,
    author = {Haugseng, Rune and Hebestreit, Fabian and Linskens, Sil and Nuiten, Joost},
    title = {Lax monoidal adjunctions, two-variable fibrations and the calculus of mates},
    journal = {Proceedings of the London Mathematical Society},
    volume = {127},
    number = {4},
    pages = {889-957},
    year = {2023}
}

@article{HHNLSpans,
    title = {Two-variable fibrations, factorisation systems and $\infty $-categories of spans},
    volume = {11},
    journal = {Forum of Mathematics, Sigma},
    author = {Haugseng, Rune and Hebestreit, Fabian and Linskens, Sil and Nuiten, Joost},
    year = {2023},
    pages = {e111}
}

@book{LMS,
    title = {Equivariant Stable Homotopy Theory},
    author = {Lewis, L. Gaunce and May, J. Peter and Steinberger, Mark},
    year = {1986},
    publisher = {Springer-Verlag Berlin},
    series = {Lecture Notes in Mathematics},
    volume = {1213},
}

@misc{AbellanBlom,
    title = {Lax-idempotent monads in homotopy theory},
    author = {Abellán, Fernando and Blom, Thomas},
    year = {2026},
    note ={To appear}
}

@unpublished{BlomSlides,
    title = {Lax-idempotent monads in homotopy theory},
    author = {Blom, Thomas},
    year = {2025},
    howpublished = {\url{https://archive.math.muni.cz/conference/ct2025/data/uploads/slides/blom.pdf}}
}

@misc{HaugsengYetAnother,
    title = {Yet another introduction to $\infty$-Categories},
    author = {Haugseng, Rune},
    year = {2025},
    howpublished = {\url{https://runegha.folk.ntnu.no/naivecat_web.pdf}}
}

@unpublished{Rezk,
    title = {Generalizing Accessible $\infty$-Categories (Draft)},
    author = {Rezk, Charles},
    year = {2021},
    howpublished = {\url{https://rezk.web.illinois.edu/accessible-cat-thoughts.pdf}}
}

@misc{Efimov,
    title={K-theory and localizing invariants of large categories}, 
    author={Efimov, Alexander I.},
    year={2025},
    eprint={2405.12169},
    archivePrefix={arXiv}
}

@misc{Joyal,
    title = {Notes on Logoi},
    author = {Joyal, André},
    year = {2008},
    howpublished = {\url{https://www.math.uchicago.edu/~may/IMA/JOYAL/Joyal.pdf}}
}

@unpublished{Gaitsgory,
    title = {Notes on Geometric Langlands: Generalities on DG Categories},
    author = {Gaitsgory, Dennis},
    year = {2012},
    howpublished = {\url{https://people.mpim-bonn.mpg.de/gaitsgde/GL/textDG.pdf}}
}

@article{Hoyois,
    title = {Higher Galois theory},
    journal = {Journal of Pure and Applied Algebra},
    volume = {222},
    number = {7},
    pages = {1859-1877},
    year = {2018},
    issn = {0022-4049},
    author = {Hoyois, Marc}
}

@misc{Linskens,
    title = {Globalizing and stabilizing global $\infty$-categories}, 
    author = {Linskens, Sil},
    year = {2024},
    eprint = {2401.02264},
    archivePrefix = {arXiv}
}

@misc{SattlerWärn,
    title = {Note on Confluent Colimits},
    author = {Christian Sattler and David Wärn},
    year = {2025},
    howpublished = {\url{https://www.cse.chalmers.se/~sattler/docs/confluent/new-2025.txt}}
}

@article{HorevYanovski,
    title = {On conjugates and adjoint descent},
    journal = {Topology and its Applications},
    volume = {232},
    pages = {140-154},
    year = {2017},
    issn = {0166-8641},
    author = {Horev, Asaf and Yanovski, Lior},
}

@misc{LehnerLectures,
    author = {Lehner, Georg},
    year = {2026},
    title = {Lectures on Shape Theory},
    howpublished = {\url{https://www.dropbox.com/scl/fi/lx89delvm1z5zh1qmcabu/main.pdf?rlkey=ux5lw9hctgusfmoh2x5fahl8c&st=ocjahhrx&e=3&dl=0}}
}

@misc{Lehner,
    title = {Algebraic $K$-theory of stably compact spaces}, 
    author = {Lehner, Georg},
    year = {2026},
    eprint = {2602.18245},
    archivePrefix = {arXiv},
}
\end{document}